\definecolor{darkgreen}{rgb}{0.00,0.5,0.00}
\definecolor{darkblue}{rgb}{0.00,0,0.6}
\numberwithin{equation}{section}
\newcommand{\E}{\mathbb{E}}
\renewcommand{\Pr}{P}
\renewcommand{\P}{\Pr}
\newcommand{\Q}{Q}
\newcommand{\Ber}{\text{Ber}}
\newcommand{\risk}{\mathcal{R}}
\newcommand{\deltahat}{\widehat{\delta}}
\newcommand{\vphat}{\widehat{\vp}}
\newcommand{\ind}[1]{\mathbf{1}\left\{#1\right\}}
\newcommand{\eps}{\epsilon}
\newcommand{\nptest}{\phi^{\textsc{np}}}
\newcommand{\lr}{\Lambda}
\newcommand{\bstar}{{b^*}}
\newcommand{\np}{{\textsc{np}}}
\renewcommand{\Re}{\mathbb{R}}
\def\ddefloop#1{\ifx\ddefloop#1\else\ddef{#1}\expandafter\ddefloop\fi}
\def\ddef#1{\expandafter\def\csname cal#1\endcsname{\ensuremath{\mathcal{#1}}}}
\newcommand{\vp}{\varphi}
\renewcommand{\d}{\text{d}}
\DeclareMathOperator*{\argmin}{\textnormal{arg min}}
\renewcommand{\subset}{\subseteq}
\renewcommand{\supset}{\supseteq}
\DeclareMathOperator{\supp}{\text{supp}}
\newcommand{\const}{{\textnormal{const}}}
\newcommand{\all}{{\textnormal{all}}}
\newcommand{\consta}{\mathbf{C}}
\newcommand{\alla}{\mathbf{U}}
\newcommand{\edit}[1]{{\color{black}#1\xspace}}
\newcommand{\remove}[1]{{\color{red} TODO removed some text here; check whether o.k. \xspace}}
\theoremstyle{plain}
\newtheorem{theorem}{Theorem}[section]
\newtheorem{proposition}[theorem]{Proposition}
\newtheorem{corollary}[theorem]{Corollary}
\newtheorem{lemma}[theorem]{Lemma}
\theoremstyle{definition}
\newtheorem{example}[theorem]{Example}
\newtheorem{remark}[theorem]{Remark}
\newtheorem{definition}[theorem]{Definition}
\newtheorem{observation}[theorem]{Observation}
\newif\ifarxiv
\title{On admissibility in post-hoc hypothesis testing\footnote{Accepted to the International Journal of Approximate Reasoning.}}
\author[1]{Ben Chugg} 
\author[3]{Tyron Lardy}
\author[1]{Aaditya Ramdas}
\author[2,3]{Peter Gr\"{u}nwald}
{ 
\affil[1]{\small Carnegie Mellon University, USA}
\affil[2]{Leiden University, The Netherlands}
\affil[3]{Centrum Wiskunde \& Informatica, The Netherlands}
}
\date{January 2026}
\begin{document}

\maketitle

\begin{abstract}
    The validity of classical hypothesis testing requires the significance level $\alpha$ be fixed before any statistical analysis takes place. 
    This is a stringent requirement. For instance, it prohibits updating $\alpha$ during (or after) an experiment due to changing concern about the cost of false positives, or to reflect unexpectedly strong evidence against the null. Perhaps most disturbingly, witnessing a p-value $p\ll\alpha$ vs $p= \alpha- \epsilon$ for tiny $\epsilon > 0$ has no (statistical) relevance for any downstream decision-making. 
    Following recent work of \citet{grunwald2024beyond}, we develop a theory of \emph{post-hoc} hypothesis testing,
    enabling $\alpha$ to be chosen after seeing and analyzing the data. 
    To study ``good'' post-hoc tests we introduce $\Gamma$-admissibility, where $\Gamma$ is a set of adversaries which map the data to a significance level. 
    We classify the set of $\Gamma$-admissible rules for various sets $\Gamma$, showing they must be based on e-values, and recover the Neyman-Pearson lemma when $\Gamma$ is the constant map. 
\end{abstract}

{
\setcounter{tocdepth}{1}
\small 
\tableofcontents
}
\newpage 

\section{Introduction}
An epidemiologist runs a clinical trial to test the efficacy of a new drug. She does not choose a significance level $\alpha$ beforehand, but waits to see the results. If the p-value is $p=0.01$ she rejects at level $\alpha=0.01$ and if $p=0.05$ she rejects at $\alpha=0.05$. 
Is this valid statistical practice? 
In the standard theory of hypothesis testing---dominant since the work of Neyman, Pearson, and Fisher in the 1920s and 30s---it is  not. Indeed, if the epidemiologist is prepared to reject at level $p$ for all $p\leq 0.10$ (say), then the true type-I error is 0.10, not $p$. 

In other words, the current paradigm of hypothesis testing requires that the significance level $\alpha$ be chosen independently of (thus without looking at) the data. This is well-known to statisticians, but continues to be a source of frustration and confusion for practitioners, and the epidemiologist's mistake above is unfortunately all too common~\citep{bakan1966test,dar1994misuse,gigerenzer2004mindless}. 
This fact is limiting: once the data has been gathered, if the selected value of $\alpha$ gives a vacuous or uninteresting result, nothing more can be done. The same data should not be used in any further analysis of the same question (or, technically, even a separate question motivated by the first\footnote{This---admittedly philosophically thorny and somewhat controversial---reliance of p-values on counterfactuals is one of their many restrictive features~\citep{pratt1964foundations,wagenmakers2007practical}.}).

The difficulties do not end there. Requiring that $\alpha$ be chosen independently of the data introduces a fundamental tension between evidence and decision-making. Namely, upon observing a p-value $p\leq \alpha$, it is   irrelevant whether $p$ is roughly the same as $\alpha$ or significantly smaller. For example, if $\alpha = 0.05$, then observing $p=0.04$ and $p=10^{-6}$ have the same decision-theoretic consequences. As pointed out recently by \cite{grunwald2024beyond}, this is a frustrating fact: the p-value is ostensibly a measure of evidence against the null hypothesis, yet any $p$-value below $\alpha$  has no formal relevance for any downstream decision task. 

This limitation serves as an uncomfortable reminder that current statistical practice consists of an odd combination of both Ronald Fisher's perspective and that of Jerzy Neyman and Egon Pearson. Whereas Fisher proposed the p-value as measure of evidence~\citep{fisher1925theory,fisher1935logic}, Neyman and Pearson were focused on decision making~\citep{neyman1928use}. The fact that we both report the p-value \emph{and} make a decision at a prespecified significance level is an awkward methodological hybrid that none of these pioneers promoted. 

This hybrid approach reflects (or perhaps introduces) confusion over the proper interpretation of p-values, significance levels, and hypothesis testing in general. And such confusion has substantial practical consequences. ``The problem of roving alphas'' as \citet{goodman1993p} puts it, i.e., choosing $\alpha$ as a function of the data, is well-documented in business and management science~\citep{hubbard2003p,hubbard2011widespread}, medicine~\citep{goodman1999toward}, and psychology~\citep{gigerenzer2014superego}. What are statisticians to do in the face of this mistake? 

One response to the problem of roving alphas is to continue to educate practitioners on the proper use of statistical tools. This, however, doesn't solve the fundamental tension inherent in the standard theory of hypothesis testing. The epidemiologist's mistake is understandable, after all---there are many situations where it is  natural to want to use the p-value to say more than is mathematically warranted. 
\ifarxiv Consider the following three examples where an analyst   might be tempted to incorrectly use p-values in a downstream decision-making task. 
\else 
Consider the following example where an analyst   might be tempted to incorrectly use p-values in a downstream decision-making task. 
\fi

\begin{tcolorbox}[
    breakable, 
  colback=gray!10,      
  colframe=gray!10,     
  sharp corners,         
]
\begin{example}[Investment]
\label{example:drug}
A funder is interested in investing in a company producing a new drug that looks promising based on an early clinical trial. 
The funder wants to buy-in at an amount inversely proportional to the risk that the drug is ineffective. The original trial was run at $\alpha=0.05$ and produced a p-value of less than 0.001. While both the company and the investor would like to use this information, the company can only give guarantees about the false positive rate at the 5\% level. 
\end{example}
\ifarxiv
\vspace{0.5cm}
\begin{example}[Data Exploration]
\label{example:exploration}
A researcher investigates whether a particular gene is associated with increased cancer risk. An analysis yields a p-value of 0.0002. Initially, the researcher had no specific $\alpha$ in mind, but now argues that the result is clearly significant. The practice of publishing the result as significant and reporting $p<0.001$ leads to increased false positive rates. 
\end{example}
\vspace{0.5cm}
\begin{example}[Trading]
\label{example:trading}
A quantitative analyst develops several new trading rules and back tests them on historical data. One  rule produces a p-value of 0.00001 when tested against the null of zero mean return. 
The quant chooses this strategy and, using this p-value, calculates the expected return of this strategy. 
If the analyst would have run the same procedure regardless of the p-value, this calculation dramatically inflates the expected return. 
\end{example}
\fi
\end{tcolorbox}

Faced with such examples, in this work we consider an alternative response to the problem of roving alphas. Instead of pointing out flaws in the epidemiologist's approach, we focus on building new statistical tools which function both as measures of evidence \emph{and} allow for mathematically sound downstream decision-making.  In particular, we develop the theory of \emph{post-hoc hypothesis testing}~\citep{grunwald2024beyond}, which allows for data-dependent significance levels. The catch is that the guarantees that an analyst can give on their procedure changes under this theory, moving from error probabilities to expected losses. If $\alpha$ is fixed beforehand, however, then the theory of post-hoc testing recovers the standard theory of hypothesis testing. Thus, in that sense, post-hoc hypothesis testing is a strict generation of the classical framework.  
Our focus in this paper is to define and classify ``admissible test families'', which are the analogue of uniformly most powerful tests in the post-hoc setting.

One can situate our work as belonging to a recent push to resolve several issues with traditional statistical tools. The inability to handle post-hoc significance levels is just one problem with much of modern statistics---others include the inability to handle optional stopping and optional continuation. The burgeoning area of sequential, anytime-valid inference (SAVI)~\citep{ramdas2023game} is focused on such issues. A fundamental tool in this line of work is the \emph{e-value}~\citep{grunwald2024safe,ramdas2024hypothesis}, an object which will also play a crucial role in this paper. 
Overall, we view our work here as an extension of SAVI: broadly, as an attempt to refine existing statistical technology to make life easier for practitioners and to mitigate the risk of statistical malpractice.  

\subsection{Contributions}
\label{sec:contributions}

Following \citet{grunwald2024safe}, in order to formulate a theory of post-hoc hypothesis testing we begin by following \citet{wald1939contributions} and recast the usual notions of type-I and type-II error probabilities in terms of expectations over data-dependent type-I and type-II loss functions. We require that a test has bounded type-I loss in expectation, where the type-I loss may be chosen adversarially. Such a test is called \emph{type-I risk safe}, a notion which generalizes the property of a test having a bounded type-I error probability.

At this point we depart from the setting studied by \citet{grunwald2024beyond} and introduce a different notion of admissibility. We summarize our contributions below. While the general setting and several of the results are presented for composite hypotheses, we are mainly focused on testing point nulls and alternatives.

Throughout the paper it will often be useful to remark on the differences or similarities between our post-hoc setting and the traditional, non-post-hoc setting. Henceforth, we will often refer to the latter as the ``traditional setting'' or sometimes as the ``Neyman-Pearson paradigm.'' 
\ifarxiv
We refer to Neyman and Pearson because it was they who introduced type-I and type-II errors and the notion of a uniformly most powerful test~\citep{neyman1928use,neyman1933ix}, concepts that we build upon and generalize in this work.   
\fi

\paragraph{Definition of $\Gamma$-admissibility\ifarxiv.\fi} Definition~\ref{def:Pi-admissible} introduces the notion of $\Gamma$-admissibility, where $\Gamma$ is a set of functions which map the data to a type-II loss function. This generalizes the definition of a uniformly most powerful test to the post-hoc setting (Observation~\ref{obs:generalization-of-np} makes this precise). 
Informally, a test is $\Gamma$-admissible if there is no test that has smaller expected type-II loss under each distribution in the alternative and across all mappings in $\Gamma$. 
Our definition of $\Gamma$-admissibility departs from the notion of admissibility studied by \citet{grunwald2024beyond}, which requires that a test be dominated with probability 1 under each distribution in the alternative to be inadmissible. We require that a test be dominated only in expectation, making fewer tests admissible. 

\paragraph{Extension of the likelihood ratio test to the post-hoc setting\ifarxiv.\fi} 
We give an extension of the likelihood ratio test---shown to be uniformly most powerful in the traditional paradigm~\citep{neyman1933ix}---to a post-hoc setting and prove that the extension is $\Gamma$-admissible for any $\Gamma$. This is in Section~\ref{sec:np-tests}. This shows that our definition of admissibility is not too strict; the set of $\Gamma$-admissible tests is non-empty. This investigation also sheds light on an interesting phenomenon in the post-hoc setting: Even for continuous distributions, if randomization is allowed then admissible tests will always be randomized. More specifically, there will be losses on which the test is randomized. Since randomized tests may not be kosher for all applications, this motivates explicitly defining and studying the admissibility of \emph{binary} post-hoc tests which reject or accept with probability one.

\paragraph{General properties of admissible tests\ifarxiv.\fi} 
Moving beyond the likelihood ratio test, we prove 
several general properties for all $\Gamma$-admissible tests for any $\Gamma$. We study both randomized and binary tests. 
Of particular note, we associate with each test an e-variable and show how any admissible test can be written as a function of the associated e-variable. Thus, the e-variable directly determines the behavior of the test. In the opposite direction, given an e-variable, we can associate with it a test obtained by thresholding. See Lemmas~\ref{lem:representation} and \ref{lem:representation-binary} and equations~\eqref{eq:delta-from-e} and~\eqref{eq:delta-from-E-binary} for the details of these relationships. 
A second result worth mentioning is a Rao-Blackwell type-argument~\citep{radhakrishna1945information,blackwell1947conditional}, whereby we demonstrate how to (weakly) improve  any given test by conditioning on a sufficient statistic. This can be found in Section~\ref{sec:rao-blackwellization}. 

\paragraph{$\alla$- and $\consta$-admissibility\ifarxiv.\fi} 
Finally, we study two specific families of maps $\Gamma$ and give necessary and sufficient conditions on tests (and e-variables) to be $\Gamma$-admissible. In particular, we study both $\Gamma = \alla$ and $\Gamma = \consta$ where $\alla$ is the set of all mappings from the data to the losses, and $\consta$ is the set of constant mappings. For a point null and alternative, we give a full characterization of all $\alla$-admissible tests in terms of e-variables for both randomized and binary tests---see Theorem~\ref{thm:M-admissibility} and Theorem~\ref{thm:M-admissibility-binary}. For $\consta$, we give several necessary conditions for a randomized test to be admissible, and provide a complete characterization in the binary case---see Theorem~\ref{thm:const-admissibility-binary}. This final result recovers the Neyman-Pearson lemma \citep{neyman1928use} when instantiated with a single loss function. The curious reader can jump ahead to Table~\ref{tab:admissibility} for a diagramatic overview of our admissibility results. 

To summarize, our main contributions are as follows: 
\begin{enumerate}
    \item We give a definition of admissibility in the post-hoc setting---$\Gamma$-admissibility, see Definition~\ref{def:Pi-admissible}---which generalizes the notion of admissibility in the traditional Neyman-Pearson setting; 
    \item We give a $\Gamma$-admissible generalization of the likelihood ratio test to the post-hoc setting (Section~\ref{sec:np-tests}) for all $\Gamma$; 
    \item For both randomized and binary tests, we classify the set of $\alla$-admissible tests (Theorems~\ref{thm:M-admissibility} and \ref{thm:M-admissibility-binary}). For binary tests, we classify the set of $\consta$-admissible tests (Theorem~\ref{thm:const-admissibility-binary}). 
\end{enumerate}

\subsection{Related Work}
\label{sec:related-work}

The post-hoc framework studied here 
is not the only proposal for how to resolve the issues with traditional hypothesis testing. Bayesian decision theory~\citep{bernardo1994bayesian,gelman1995bayesian} has a devoted following, where one replaces p-values and significance levels with posterior probabilities and Bayes factors, and makes decisions by minimizing posterior loss. A great benefit of the Bayesian paradigm is that it handles data-dependent loss functions for free, which we show formally in  Appendix~\ref{app:bayesian} where we explore the connection between our setting and the Bayesian one.   
The hefty price to pay is that one cannot get prior-free Type-I error probability guarantees.

\edit{Another avenue of related work is the subfield of imprecise probability. Here one tries to avoid making claims with false precision by demanding that decisions be defensible over a set of models and/or priors (as done recently in, e.g.,~\citet{chau2025credal} for two-sample testing). E-values may already be considered as a tool of imprecise probability, as they are closed under convex combinations of the null (sometimes called ``credal sets''). 
Moreover, the e-posterior~\citep{grunwald2023posterior} was recently connected to credal sets~\citep[Proposition 10]{caprio2025joys}. While we do not pursue further connections here, we believe this to be an interesting direction. }

\edit{Besides imprecise probability and Bayesian methods}, other approaches to navigate around the drawbacks of traditional hypothesis testing involve changing what researchers report, e.g., switching p-values with confidence intervals~\citep{gardner1986confidence} or likelihood ratios~\citep{royall2017statistical}. Post-hoc hypothesis testing has more in common with some of these than with others (it plays nicely with the law of likelihood for instance, since the likelihood ratio is an e-value), but importantly it is  the only approach to seek a strict generalization of the traditional Neyman-Pearson paradigm. Given that most hypothesis testing in practice still relies on this paradigm, we believe it worthwhile to study how to expand the scope of its tools instead of casting them aside.

Our work can be viewed as part of a more general focus on e-variables/e-values, which are simple yet surprisingly fruitful mathematical objects that have proven useful across a wide variety of statistical problems. While some authors had previously used them implicitly, it was not until recently (roughly 2020) that excitement grew concerning their applications and they were given a name. Since then, there has been growing interest in both developing the underlying theory of e-variables and deploying them in various problems. 
We refer to \citet{grunwald2024safe}, \citet{ramdas2024hypothesis}, and \citet{vovk2021values} for introductions to e-values and their use in hypothesis testing.

That e-values can play nicely with data-dependent parameters, thus overcoming some of the drawbacks of p-values, has been noted previously by several authors. 
The first work in this vein came from the multiple testing literature and worked with e-values only implicitly~\citep{katsevich2020simultaneous}. 
As far as we are  aware, the first result to explicitly link link e-values and data-dependent thresholds is Lemma 1 of \citet{wang2022false} in the context of false discovery rate control. Then \citet{grunwald2024beyond} put these connection on firm decision-theoretic ground and launched the original investigation into post-hoc hypothesis testing. In contradistinction to this paper, Gr\"{u}nwald considers multiple actions, whereas we consider only accept/reject decisions. Working under a different notion of admissibility, he gives a partial classification of all admissible tests in terms of e-variables. We further discuss his results and his notion of admissibility in Appendix~\ref{app:admissibility}.

As part of this work, \citet{grunwald2024beyond} notes that e-value based confidence intervals and confidence distributions (using the so-called e-posterior~\citep{grunwald2023posterior}) are safe under data-dependent selections of $\alpha$. 
Since then, a number of authors have noted the promise of e-values in enabling testing and estimating when significance levels are data-dependent~\citep{xu2024post,koning2023markov,hemerik2024choosing,xu2025bringing,gauthier2025values}. 
In particular, \citet{koning2023markov} also studies post-hoc hypothesis testing and defines post-hoc p-values. He further relates post-hoc p-values to e-values and shows that any post-hoc test defines an e-value.   
An overview of post-hoc testing and decision-making with e-values was recently given in \citet[Chapter 4]{ramdas2024hypothesis}. 

Besides \citet{grunwald2024beyond}, several other authors have studied questions concerning the admissibility of e-variables. These include \citet{wang2024only}, \citet{vovk2020combining}, and \citet{ramdas2020admissible}. However, their notions of admissibility either differ from ours or the settings are incomparable. \citet{wang2024only} and \citet{vovk2020combining} are concerned with designing e-values that are as large as possible under the alternative, and \citet{ramdas2020admissible} are concerned with admissible anytime-valid inference.

\subsection{Outline}
\label{sec:outline}

Section~\ref{sec:testing_as_decision_theory} introduces the general framework of post-hoc hypothesis testing, recasting the usual notions of type-I and type-II error probabilities as type-I and type-II risk. Here we formalize a post-hoc hypothesis test as a ``test family,'' introducing both randomized and binary test families (Definitions~\ref{def:decision-family} and~\ref{def:binary-decision-family}). Section~\ref{sec:admissibility} introduces $\Gamma$-admissibility and discusses its relationship with traditional hypothesis testing. Section~\ref{sec:assumptions} discusses various assumption we make in the remainder of the paper and gives relevant background on e-values. 
With this machinery in hand, Section~\ref{sec:np-tests} generalizes the likelihood ratio test to a post-hoc setting. Section~\ref{sec:properties} then gives general properties of $\Gamma$-admissible tests for any $\Gamma$. Section~\ref{sec:randomized} studies tests which allow randomization and Section~\ref{sec:binary} studies tests that are restricted to be binary.
Sections~\ref{sec:M-admissibility} and \ref{sec:const_admissibility} study $\alla$-admissibility and $\consta$-admissibility respectively. Section~\ref{sec:summary} concludes. 
All longer proofs are delegated to Appendix~\ref{sec:proofs}.

\section{Post-hoc hypothesis testing}
\label{sec:testing_as_decision_theory}

We begin with a standard hypothesis testing setup. 
Let $\calP$ and $\calQ$ be two disjoint families of probability measures on some measurable space $(\Omega, \calF)$. 
We observe data encoded by a random variable $X:\Omega\to \calX$ taking values in some  space $\calX$ whose behavior is governed by some unknown distribution $R\in \calP\cup \calQ$. 
We are  testing $\calP$ against $\calQ$, meaning that we are  tasked with deciding between the null hypothesis $H_0: R\in \calP$ and the alternative $H_1: R\in\calQ$. We either \emph{reject} or \emph{sustain} the null hypothesis $H_0$. If the former, we are  implicitly adopting the alternative. 

We will introduce the framework of post-hoc hypothesis testing in terms of possibly composite hypotheses $\calP$ and $\calQ$, but thereafter switch to considering only point hypothesis $\calP=\{P\}$ and $\calQ = \{Q\}$. While some of our results extend to the composite case, not all do, and considering point hypotheses keeps the presentation cleaner. 

In order to formalize the notion of post-hoc significance levels, we follow \citet{wald1939contributions} and \citet{grunwald2024beyond} and frame hypothesis testing in terms of expected losses instead of error probabilities. 
For an index set $\calB\subset\Re_{\geq 0}$ which we call a set of \emph{scenarios},  
let $\{ L_b\}_{b\in \calB}$ be a set of \emph{loss functions}, where $ L_b:\{0,1\}\times \{0,1\}\to\Re_{>0}$. The first argument denotes whether we are  considering the loss under the null or alternative: $ L_b(0,\cdot)$ is loss under the former and $ L_b(1,\cdot)$ that under the latter. The second argument denotes whether we sustain (0) or reject (1) the null. We will assume that $ L_b(0,0) =  L_b(1,1) = 0$ for each $b\in\calB$. In other words, making the correct decision (sustaining when $R\in\calP$ and rejecting when $R\in \calQ$) incurs zero loss.  

We choose the letters $\calB$ and $b$ to accord with Example~\ref{example:drug}. For instance, we might consider $ L_b(0,1)$ to be the buy in of the funding agency---that is, the amount they stand to lose if the drug turns out to be ineffective.  
We do not assume that $\calB$ is finite (or even countable). For instance, we might have $ L_b(0,1) = b$ for all $b\geq 1$. (Indeed, this is arguably the most natural choice; see the discussion at the end of this section). Finally, we assume without loss of generality that the losses $ L_b(0,1)$ are increasing in $b$ (if not, we may simply relabel accordingly). Note this does not imply any ordering of the losses under the alternative. 

Next we define the analogue of a hypothesis test in this post-hoc setting. Our decision to sustain or reject the null takes the form of a test family, which sees both the data and the loss, and then makes a (possibly randomized) decision. We emphasize that while we will allow the loss to be data-dependent, the test \edit{takes this loss as an argument.} If it were otherwise, a test would have to be needlessly conservative. This is also the case in the traditional setting: the test knows the parameter $\alpha$. 

In what follows, let $\Ber(p)$ denote a Bernoulli distribution with parameter $p\in[0,1]$. 

\begin{definition}[Test family]
\label{def:decision-family}
    A \emph{test family} $\delta$ is a map $\delta:\calX\times \calB\to [0,1]$. We call $\delta$ a \textit{family} because it represents a hypothesis test for each $b\in\calB$. 
    Given $X\in\calX$ and $b\in\calB$, the decision to reject is drawn as $D\sim \Ber(\delta(X,b))$ where $D=1$ means reject and $D=0$ means sustain.
\end{definition}
We also define \emph{binary} test families, which do not allow randomization.

\begin{definition}[Binary test family]
\label{def:binary-decision-family}    
A \emph{binary test family} $\vp$ is a map $\vp:\calX\times \calB\to \{0,1\}$. 
    Given $X\in\calX$ and $b\in\calB$, we reject if $\vp(X,b) = 1$ and sustain otherwise. 
\end{definition}

When studying binary test families, we will assume that $\calP$ and $\calQ$ consist of continuous distributions. Even in the traditional setting, we know that \edit{powerful} hypothesis tests must generally be randomized for discrete distributions. Further, in the case of discrete distributions and binary tests, there is no neat classification of UMP tests as far as we are aware. Thus, studying admissibility in this case becomes quite challenging.

We will typically denote binary test families with the Greek letters $\vp$ or $\psi$---see Table~\ref{tab:notation}.  
Unless binary test families are mentioned explicitly, it should be assumed in the sequel that we are  discussing test families that could be either randomized or binary. We will denote such families with letters $\delta$ and $\phi$.  We may sometimes refer to test families that allow randomization (Definition~\ref{def:decision-family}) as \emph{randomized} test families to emphasize the distinction from binary test families.

For notational simplicity, for a test family $\delta$ and $a\in\{0,1\}$ define 
\begin{equation}
    L_b(a,\delta(X,b)) \equiv \E_{D\sim \Ber(\delta(X,b))} L_b(a,D), 
\end{equation}
which is the expected loss (type-I or type-II) when playing $\delta(X,b)$ on loss $L_b$. Note that we can write $L_b(0,\delta(X,b)) = L_b(0,1) \delta(X,b)$ and $L_b(1,\delta(X,b)) = L_b(1,0)(1 - \delta(X,b))$, since we are assuming that $L_b(0,0) = L_b(1,1) = 0$.  These two identities will be used extensively. Next we define the risk of a test family.

\begin{definition}[Type-I Risk]
\label{def:risk}
For a test family $\delta$, set 
\begin{align}
\label{def:k-risk}
\risk_\calP(\delta) &\equiv  \sup_{\P\in\calP}\E_{X\sim \P} \sup_{b\in \calB} L_b(0,\delta(X,b)).
\end{align}
We call $\risk_\calP(\delta)$ the \emph{type-I risk} of $\delta$. If $\risk_\calP(\delta)\leq 1$, we say that $\delta$ is \emph{type-I risk safe}. 
\end{definition}

 Definition~\ref{def:risk} implicitly assumes that $\sup_{b} L_b(0,\delta(X,b))$ is measurable. In this work we will make certain modest assumptions that ensure this is the case (see the discussion in Section~\ref{sec:risk-as-adversary}).

Risk is the equivalent of type-I error probability in the standard Neyman-Pearson paradigm. Indeed, consider the case of a single loss $\calB=\{b_0\}$. Then  $\risk_\calP(\delta) = \sup_{P\in\calP} \E_P L_{b_0}(0,\delta(X,{b_0}))
 = L_{b_0}(0,1) \sup_{P\in\calP} \E_P[\delta(X,b_0)]$. A risk bounded by 1 is thus identical to a type-I error probability bounded by  $\alpha>0$ if we take $L_{b_0}(0,1) = 1/\alpha$. The upper bound of 1 on the risk is arbitrary. It can be replaced by any positive constant and our results will scale accordingly. See for instance \citet[Section 4]{ramdas2024hypothesis} which replaces 1 with a constant $\beta>0$. 

 While risk recovers type-I error probability when there is a single loss, risk is of course a different metric than an error probability in general. A framework built on replacing error probabilities with risk may therefore make some readers uncomfortable. We provide a longer defense of risk in Appendix~\ref{app:risk-vs-error}, but let us make two points here. First, risk is actually the norm in decision theory. \edit{Second, type-I risk seems to us the most straightforward generalization of type-I error control}.\footnote{For the interested reader, \citet{koning2023markov} explores two other options, so-called ``geometric'' and ``arithmetic'' post-hoc validity. }

In the definition of type-I risk, the supremum over losses is inside the expectation. This allows the loss to be data-dependent, which can be seen explicitly by writing type-I risk in terms of the supremum over mappings from the data to losses. In particular, under reasonable measure-theoretic assumptions on the data and the losses, we have the equality: 
\begin{equation}
\label{eq:risk-as-adversary}
    \sup_{P\in\calP}\E_{X\sim P}\sup_{b\in\calB} L_b(0,\delta(X,b))= \sup_{P\in\calP}\sup_{B:\calX \to \calB} \E_{X\sim \P} L_{B(X)}(0,\delta(X,B(X))),
\end{equation}  
Each function $B:\calX\to\calB$ in~\eqref{eq:risk-as-adversary} can be thought of as an adversary who chooses a loss based on the data. 
Thus, if a decision $\delta$ is type-I risk safe, it is  \emph{post-hoc} type-I risk safe in the sense that it guarantees a type-I loss of at most 1 on any data-dependent loss. 
While intuitively clear, making this fully mathematically precise requires some non-trivial work. The formal statement and its proof can be fund in Appendix~\ref{sec:risk-as-adversary}.

We define the following fundamental object:
\begin{equation}
\label{eq:E_delta}
    E_\delta(X) \equiv \sup_{b\in \calB} L_b(0,\delta(X,b)).
\end{equation}
Using the letter ``E'' is no coincidence: If $\delta$ is type-I risk safe, then $E_\delta$ is an e-value (see Section~\ref{sec:assumptions}). 
There is also a natural way to define test families from e-values which we will see in Section~\ref{sec:properties}. This creates a bidirectional mapping between e-values and tests, a relationship which will be convenient both for classifying the admissible test families (e.g., Theorem~\ref{thm:M-admissibility}) and in proving properties of admissible tests.

\subsection{Admissible test families}
\label{sec:admissibility}

\edit{In the standard theory of hypothesis testing, two classes of tests are of interest: uniformly most powerful (UMP) tests and admissible tests. A level-$\alpha$ test (i.e., having type-I error bounded by $\alpha$) is UMP if it has greater or equal power than every other level-$\alpha$ test for every $Q\in\calQ$. 
If $\calQ$ is composite, then UMP tests only exist in special settings. These include practically relevant cases, however, such as the one-sided t-test and $\chi^2$-tests, and more generally when $\calP \cup \calQ$ admit a monotone likelihood ratio.
An admissible test, meanwhile, cannot be uniformly improved across all $Q\in\calQ$ (meaning that it cannot be strictly improved for some $Q$ and not made worse on any other $Q$). In this work we generalize admissibility to a post-hoc setting. 
}

Let $\alla \equiv \alla(\calX, \calB)$ be the set of all measurable functions (``adversaries'')  $B:\calX\to\calB$. Intuitively, we call a test family $\delta$ $\Gamma$-inadmissible for some $\Gamma\subset\alla$ if there is another test which is type-I risk safe and never has higher, but sometimes lower, expected type-II loss under some adversary $B\in\Gamma$. More formally:

\begin{definition}[$\Gamma$-admissibility]
\label{def:Pi-admissible}
We say the decision family $\phi$ is \emph{weakly preferable} to $\delta$ relative to $\Gamma\subset\alla$ if it is  type-I risk safe and for all $B\in\Gamma$ and all $Q\in\calQ$,
\begin{equation}
\label{eq:Pi-admissible}
    \E_{X\sim Q} [ L_{B(X)}(1,\phi(X, B(X)))] \leq \E_{X\sim Q} [ L_{B(X)}(1,\delta(X, B(X)))]. 
\end{equation}
We call $\phi$ \emph{strictly preferable} to $\delta$ if the inequality is strict for at least one $B\in\Gamma$ and $Q\in\calQ$. If there exists some type-I risk safe $\phi$ that is strictly preferable to $\delta$ relatively to $\Gamma$ we say that $\delta$ is $\Gamma$-\emph{inadmissible}. If no decision family $\phi$ is strictly preferable to $\delta$ relative to $\Gamma$ then we say that $\delta$ is $\Gamma$-admissible.  (Here is an alternative definition: $\delta$ is $\Gamma$-admissible if for every $\phi$ that is weakly preferable to $\delta$,~\eqref{eq:Pi-admissible} actually holds with equality for all $B\in\Gamma$ and $Q\in\calQ$.)
\end{definition}

\edit{
\begin{remark}
\label{rem:safety-vs-admissibility}
    Unless we consider $\Gamma=\alla$ there is an asymmetry between the adversaries considered in the definition of type-I risk and the definition of $\Gamma$-admissibility. Lemma~\ref{lem:risk-as-adversary} shows that a type-I risk safe rule has expected type-I loss bounded by 1 across any mapping $B:\calX\to\calB$, while in $\Gamma$-admissibility we are  deliberately restricting the mappings under consideration, but only for the alternative (i.e.\ for type II risk). This is by design.
    In the worst case, the loss may be data-dependent---we want to ensure we remain type-I risk safe in this case. However, we might also be cautiously optimistic that it won't be data-dependent, and allow ourselves to choose a test that is $C$-admissible based on this hope, all the while knowing that if we are wrong then at least we are protected under the null. Therefore, our first goal is to find tests which have bounded type-I risk under even the worst case adversary. Only then do we consider  type-II risk. This is analogous to how Neyman and Pearson prioritize type-I error bounded by $\alpha$ and only then look for tests with higher power.     
\end{remark}
}

We leave implicit in Definition~\ref{def:Pi-admissible} whether \edit{we are} considering binary or randomized tests; the definition holds for both. We will always make clear in the text which kinds of tests are under consideration. 

In what follows we will always assume that any collection $\Gamma$ of adversaries includes the constant mappings. This will simplify many of the proofs. It is also intuitive: it would be odd if an adversary could play $L_b(1,0)$ for only a subset of the observations. To formalize this assumption, we will consider only those $\Gamma\supset\consta$ 
where 
\begin{equation}
\label{eq:constant_mappings}
    \consta \equiv \big\{ B\in \alla: B(X) = b^*\text{ for all }X\in\calX \text{ and some }b^*\big\}.
\end{equation}
In the sequel, we will often shorthand $B(X)$ to $B$ in equations to save space. 
\edit{Recall} that for any $b\in\calB$, $L_b(1,\delta(X,b)) =  L_b(1,0)(1 - \delta(X,b))$
(we are assuming that $ L_b(1,1) =  L_b(0,0) = 0$), so~\eqref{eq:Pi-admissible} can be equivalently written as $\E_{X\sim Q}  L_{B}(1,0)\delta(X,B)\leq \E_{X\sim Q}  L_{B}(1,0)\phi(X,B)$. In particular, this implies that if $\phi(X, b)\geq \delta(X,b)$ for some $b\in\calB$ and all $X\in F$ for some $F\subset\calX$, then $\E_\Q [L_b(1, \phi(X,b))\ind{X\in F}] \leq \E_Q [L_b(1, \delta(X,b)\ind{X\in F}]$, a fact we will use often.  

\edit{Despite the differences between being UMP and admissible, it's worth investigating when the two notions coincide.
Suppose that there is only one loss function $L_{b_0}$. If $\phi$ is UMP at level $L_{b_0}^{-1}(0,1)$ then it satisfies $\E_Q[\delta(X,b_0)] \leq \E_Q[\phi(X,b_0)]$ for all $Q\in\calQ$ and all other tests $\delta$.  Thus, in particular, these inequalities are satisfied when $\delta$ is admissible (which notion of admissibility does not matter, since there is only one loss), thus implying that the inequalities must be equalities. Hence $\delta$ has the same power as $\phi$ on each $Q$, and is also UMP.  We may therefore conclude: }

\begin{observation}
\label{obs:generalization-of-np}
    For a single loss $L$ (the traditional Neyman-Pearson setting) with $\alpha = L^{-1}(0,1)$, \edit{if a uniformly most powerful level-$\alpha$ test exists then  any admissible test family is such a test.} 
\end{observation}

One could  consider other notions of admissibility. \citet{grunwald2024beyond} considers a family $\phi$ to be preferable to $\delta$ if, for any adversary, $\phi$ rejects whenever $\delta$ rejects with probability 1. We compare our notion of admissibility with that of \citet{grunwald2024beyond} further in Appendix~\ref{app:admissibility}, but suffice it to say here that requiring that $\phi$ be better than $\delta$ with probability 1 is a strong requirement, and does not recover the notion of a uniformly most powerful test. 

A second natural idea is to move the supremum over $B\in\Gamma$ inside the expectation and replace~\eqref{eq:Pi-admissible} with 
$\E_\Q [\sup_{B\in \Gamma} L_B(1,\phi(X,B))] \leq \E_{X\sim Q}[\sup_{B\in \Gamma} L_B(1,\delta(X,B))]$. That is, we require that the worst case adversary for $\phi$ is no worse than the worst case of $\delta$, in expectation. However, such a definition of admissibility is too strong: the set of admissible families is empty. This is also shown in Appendix~\ref{app:admissibility}.

One might expect that if $\delta$ is $\Gamma_1$-admissible, then it is  $\Gamma_2$-admissible for any $\Gamma_1\subset\Gamma_2$. But this does not necessarily hold. Consider $\Gamma_1 = \{B: B(x) = b_0\}$ for some fixed $b_0$. The family $\delta(X,b) = \nptest_{b_0}(X)$ if $b=b_0$ and $\delta(x,b) = 0$ otherwise is $\Gamma_1$-admissible, but is not $\alla$-admissible by Lemma~\ref{lem:representation} if $\calB$ contains more than $b_0$. To put it another way, by introducing more adversaries we open ourselves to the possibility of a test family having strictly better type-II risk than $\delta$ on one of those adversaries.

\begin{remark}
\label{rem:type-I-risk-adversaries}
Related to the previous remark, one could also consider different sets of adversaries in the definition of type-I risk safety. In particular, we could consider the notion of $(\Gamma_1,\Gamma_2)$-admissibility, where $\Gamma_1$ is the set of mappings considered in type-I risk (i.e., the supremum on the right hand side of~\eqref{eq:risk-as-adversary}), and $\Gamma_2$ is the set of adversaries considered in Definition~\ref{def:Pi-admissible}. This paper would then be concerned with $(\alla,\Gamma)$-admissibility while standard Neyman-Pearson theory focuses on $(\consta, \consta)$-admissibility. While this nicely emphasizes the relationship between our notion of type-I risk safety and that of traditional hypothesis testing, it is burdensome to continuously write $(\alla, \Gamma)$-admissibility. Therefore, to spare ourselves notational pain, in this paper we shorten $(\alla,\Gamma)$-admissibility to $\Gamma$-admissibility.  
\end{remark}

Throughout this work, we will be primarily concerned with testing a  simple null $P$ against simple alternative $Q$. Section~\ref{sec:properties} will give general properties of admissible test families for all $\Gamma$, and then Sections~\ref{sec:M-admissibility} and Section~\ref{sec:const_admissibility} will provide full or partial classifications of $\Gamma$-admissible test families for two particular classes $\Gamma$:
\begin{enumerate}
    \item $\Gamma = \alla$. The first natural set of adversaries to consider is all mappings $B: \calX\to\calB$. Under such a liberal notion of admissibility, a great many test families are admissible. Indeed, under mild assumptions on the losses, every test family $\delta$ which satisfies $\risk_\calP(\delta) = 1$ is admissible. In other words, every test which is defined by a sharp e-value is admissible. 
    \item $\Gamma = \consta$ for $\consta$ as in~\eqref{eq:constant_mappings}. 
    We discuss the interpretation of $\consta$ further in Section~\ref{sec:const_admissibility}, \edit{and provide a concrete motivation in Example~\ref{example:drugB},} but let us mention here that it may be helpful to interpret $\consta$-admissibility in terms of counterfactuals. Namely, if $\delta$ is $\consta$-admissible then for every type-I risk safe $\phi$, there exists some loss $b$ such that if the adversary had played $L_b$, we would not regret playing $\delta$ instead of $\phi$.  
    We will give several necessary conditions for a randomized test family to be $\consta$-admissible and provide a full characterization of $\consta$-admissible binary test families. 
\end{enumerate}
Note that $\alla$ and $\consta$ are, respectively, the largest and smallest sets of adversaries we can consider under the assumption that $\Gamma\supset\consta$: we are studying the two extrema of Definition~\ref{def:Pi-admissible}. 
While we do not explicitly study other sets of adversaries here (apart from when our results easy generalize to handle all families $\Gamma$; cf.\ Section~\ref{sec:properties}), 
\edit{
we believe that other sets could be of interest, as illustrated by Example~\ref{example:drugB} below, which also provides further  motivation for studying $\consta$-admissibility.}

Table~\ref{tab:admissibility} gives an overview of our admissibility results. 

Finally, it is very natural to take $ L_b(0,1) = b$ for $b\geq 1$. In this case, 
\begin{equation}
\label{eq:all_losses}
    \sup_{P\in\calP} \E_\P\left[\sup_{b\in\calB} L_b(0,\delta(X,b))\right] = \sup_{P\in\calP} \E_\P\left[\sup_{\alpha\in(0,1]}\frac{\delta(X,1/\alpha)}{\alpha}\right].
\end{equation}
That is, letting the losses take on any value in $[1,\infty)$ lets us consider any post-hoc significance level $\alpha\in(0,1)$. 
Arguably, the right hand side of \eqref{eq:all_losses} is what one would expect of a definition of post-hoc hypothesis testing 
(see, e.g., \citet[Section 6.1]{wang2022false}, \citet{koning2023markov} and \citet{gauthier2025values}).

\begin{tcolorbox}[
    breakable, 
  colback=gray!10,      
  colframe=gray!10,     
  sharp corners,         
]
\begin{example}[Example~\ref{example:drug}, Continued]
\label{example:drugB}
\edit{
Consider the following idealized instantiation(s) of Example~\ref{example:drug}. Suppose $b$ represents the cost of investment expressed in some monetary unit; for concreteness say the cost is  $\$ b \cdot 10^5$. A funder is given the opportunity to invest $b$ in the drug's production. \\

If the drug turns out to be ineffective yet an investment is made then the investor loses their capital. We formalize this as $L_b(0,0)=0, L_b(0,1)=b$. 
\edit{
When the drug is effective, we consider two pay-off variations: 
\begin{enumerate}
    \item The pay-off is a fixed constant $C^*$. This reflects, for example, a situation in which the funder is the sole investor, and the investment is used to build a factory that can produce a fixed quantity of the drug. We view the loss as the regret incurred if the funder does not invest in the successful drug, so $L_b(1,1) = 0$ and $L_b(1,0) = C^*$ for all $b$. 
    \item The pay-off is a strictly increasing function $f(b)$ of $b$. This could for example be the case if larger $b$'s represent a larger factory (more productive capacity) or if there are many funders who each get a share of profits  proportional to their investment (in that case $f$ would be linear). 
    In this case, again viewing $L_b(1,\cdot)$ as the regret of not investing in a successful drug, we set $L_b(1,0) = f(b)$ and $L_b(1,1) = 0$. 
\end{enumerate}
}

In the first variation, $L_b(1,0)=C^*$ is independent of $b$, showing that $\Gamma = \consta$ is an important special case to consider. 
In the second variation, $L_b(1,0)$ is increasing in $b$. It then seems reasonable to assume that the investment $b$ is increasing in the amount of evidence for effectiveness---in practice the `adversaries' might be the data-analysts in a company, who suggest to management to make a larger investment the more evidence they see in the data. This reflects how p-hacking typically occurs in practice: an analyst is tempted to lower $\alpha$ (increase $L_b(0,1)$, which, in the second case also increases $L_b(1,0)$) after seeing promising results. This suggests an analysis in which $\Gamma$ is restricted to those $B(X)$ which are (monotonically) increasing in a sufficient statistic of the data (e.g., the likelihood ratio); thus $\consta \subsetneq \Gamma\subsetneq \alla$. On the other hand, if we allowed $f(b)$ to be all functions of $b$, not just increasing functions, then $\Gamma = \alla$. 
}
\end{example}
\end{tcolorbox}

\subsection{Notation, assumptions, and further background}
\label{sec:assumptions}

As discussed previously, our results rely heavily on e-variables/e-values. 
An e-variable for a set of distributions $\calP$ is a nonnegative random variable $E$ with expected value at most 1 under the null: $\sup_{\P\in\calP} \E_\P[E]\leq 1$. The realized value of an e-variable is called an e-value. E-variables are typically functions of the underlying data $X$ and we will often write $E = E(X)$. 
We call an e-variable $E$ \emph{sharp} if $\sup_{P\in\calP}\E_P[E]=1$.

We use $\ind{\cdot}$ as an indicator, i.e., $\ind{F}$ is 1 if $F$ occurs and 0 otherwise. 
Throughout, we synonymize ``increasing'' with ``nondecreasing,'' and not with ``strictly increasing.'' 
Given a function $h:\Re \to \Re$, let $h^-(y) = \inf\{z: h(z) \geq y\}$ denote its (lower) generalized inverse. If $h$ is strictly increasing and $h^{-1}$ is well-defined, then $h^- = h^{-1}$. If $h$ is increasing, then $h^-$ is left-continuous and $\ind{h(x) \geq y} = \ind{x\geq h^-(y)}$. 
The generalized inverse will appear mainly in Section~\ref{sec:const_admissibility_binary}.

For a point null $P$ and point alternative $Q$ with $Q\ll P$ we use $\lr$ to refer to the likelihood ratio $\lr(X) = \frac{\d\Q}{\d\P} (X)$ where $\d\Q/\d\P$ is the Radon-Nikodym derivative between $Q$ and $P$. 
We let $\calL = \{L_b(0,1):b\in\calB\}$ be the ``span'' of the type-I losses, to appear in Section~\ref{sec:properties}.

\begin{table}[t]
    \centering
    \textbf{Notation}\\
    \vspace{0.4cm}
    \begin{tabular}{rll}
    \hline 
         $\delta,\phi$& Test families, either binary or randomized  & Definition~\ref{def:decision-family} \\
          $\vp, \psi$ & Binary test families & Definition~\ref{def:binary-decision-family} \\ 
         $t_\vp$ & Decision curve of (admissible) binary test $\vp$ & Corollary~\ref{cor:decision-curve} \\ 
         $E_\delta$ & e-variable associated with $\delta$ & Equation~\eqref{eq:E_delta} \\ 
         $\delta_E,\phi_E$ & Randomized test families defined by $E$ & Equation~\eqref{eq:delta-from-e} \\ 
         $\vp_E, \psi_E$ & Binary test families defined by $E$ & Equation~\eqref{eq:delta-from-E-binary}\\
         $\calL$ & Set of all type-I losses, $\{L_b(0,1):b\in\calB\}$ & Equation~\eqref{eq:loss-space}\\
         $\alla$ & All measurable maps from $\calX$ to $\calB$ & --- \\ 
         $\consta$ & All constant maps from $\calX$ to $\calB$ & Equation~\eqref{eq:constant_mappings} \\
         \hline 
    \end{tabular}
    \caption{Some common notation used throughout the paper. }
    \label{tab:notation}
\end{table}

Let us now discuss some assumptions. 
Suppose we observe $X$ which has no support under any $P\in\calP$. Any reasonable test family must reject the null in this case. If not, then a test family which does so will have lower type-II risk on all losses. Similarly, a test family which observes $X$ with no support under any $Q\in\calQ$ must sustain. This leads to the following observation. 
\begin{observation}
\label{eq:Q<<P}
    If $\delta$ is a $\Gamma$-admissible test family for any $\Gamma$, then for any $A\subset \calX$ with $\sup_{\P\in\calP}\P(A)=0$, $\delta(X,b) = 1$ for all $X\in A$ $\calQ$-almost surely and all $b\in\calB$. Likewise, for any $C\subset\calX$ with $\sup_{Q\in\calQ} Q(C) = 0$, we have $\delta(X,b) = 0$ for all $X\in C$ $\calP$-almost surely and $b\in\calB$. 
\end{observation}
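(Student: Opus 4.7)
The plan is to prove both halves by modification arguments: starting from $\delta$, I would construct a test $\phi$ that coincides with $\delta$ off the relevant null set and replaces $\delta$ by the ``correct'' decision on that set. The first half yields a strict contradiction with $\Gamma$-admissibility, while the second collapses to a weaker ``canonical representative'' statement, and this second step is where the main subtlety lies.

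For the first half, fix $A \subset \calX$ with $\sup_{P \in \calP} P(A) = 0$ and suppose for contradiction that $Q(A_b) > 0$ for some $Q \in \calQ$ and $b \in \calB$, where $A_b := \{X \in A : \delta(X,b) < 1\}$. Define $\phi(X, b') = 1$ for $X \in A$ and $\phi(X, b') = \delta(X, b')$ otherwise. Since $P(A) = 0$ for every $P \in \calP$, we have $E_\phi = E_\delta$ $\calP$-almost surely, giving $\risk_\calP(\phi) = \risk_\calP(\delta) \leq 1$, so $\phi$ is type-I risk safe. For weak preferability, $L_{B(X)}(1,1) = 0$ on $A$ and $\phi = \delta$ off $A$, whence $\E_Q[L_B(1, \phi(X, B))] \leq \E_Q[L_B(1, \delta(X, B))]$ for every $Q \in \calQ$ and $B \in \Gamma$. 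For strict preferability, take the constant adversary $B \equiv b$, which lies in $\Gamma$ since $\Gamma \supset \consta$; the gap then equals $\E_Q[L_b(1,0)(1-\delta(X,b))\ind{X \in A_b}]$, which is strictly positive because $L_b(1,0) > 0$, $1-\delta(X,b) > 0$ on $A_b$, and $Q(A_b) > 0$. This contradicts the assumed admissibility of $\delta$.

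For the second half, fix $C \subset \calX$ with $\sup_{Q \in \calQ} Q(C) = 0$ and set $\phi(X, b') = 0$ on $C$ and $\phi = \delta$ otherwise. Since $\sup_{b'} L_{b'}(0, \phi(X, b'))$ vanishes on $C$ and matches that of $\delta$ off $C$, we have $\risk_\calP(\phi) \leq \risk_\calP(\delta) \leq 1$; and because $Q(C) = 0$ for every $Q \in \calQ$, the type-II risks of $\phi$ and $\delta$ agree exactly for every pair $(Q, B)$. The main obstacle is that this construction yields only weak, not strict, preferability, so admissibility of $\delta$ is not contradicted by $\phi$ alone. The cleanest resolution is to read the observation as asserting that any admissible $\delta$ has an equivalent representative with $\delta = 0$ on $C$ $\calP$-almost surely: $\phi$ and $\delta$ have identical type-II risks and $\phi$ has no larger type-I risk, so $\phi$ is likewise admissible, and $\delta$ can be identified with this canonical version throughout. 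Alternatively, under a point null $P$ one can upgrade to strict preferability by exploiting the strict slack $\risk_P(\phi) < \risk_P(\delta) \leq 1$: perturb $\phi$ by adding a small $\epsilon > 0$ to $\phi(\cdot, b)$ on a set $D$ with $P(D), Q(D) > 0$; for sufficiently small $\epsilon$ the risk budget is preserved while the type-II risk under $B \equiv b$ strictly decreases, delivering the contradiction.
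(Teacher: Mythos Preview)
The paper treats this as an observation with only informal justification (``If not, then a test family which does so will have lower type-II risk on all losses. Similarly, \ldots''), so there is no detailed proof to compare against. Your first-half argument is a correct rigorous rendering of that informal reasoning and is exactly what the paper has in mind.

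You are right to flag the asymmetry in the second half: since $Q(C)=0$ for every $Q\in\calQ$, replacing $\delta$ by $0$ on $C$ leaves every type-II risk unchanged, so the modification yields only weak preferability and admissibility of $\delta$ is not directly contradicted. The paper's one-line ``similarly'' glosses over this. Your canonical-representative reading is the appropriate resolution and is in fact how the paper uses the observation: immediately afterward it writes ``Thus, going forward, we may safely assume that $\bigcup_{Q\in\calQ}\supp(Q)=\bigcup_{P\in\calP}\supp(P)$,'' i.e.\ it identifies any admissible $\delta$ with the version that is $0$ on $C$ (and $1$ on $A$) and proceeds under the common-support assumption.

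One caution on your perturbation upgrade for a point null: you need a set $D$ with $P(D)>0$, $Q(D)>0$, and $\phi(\cdot,b)<1$ on $D$, and the existence of such a $D$ is not automatic \emph{before} the common-support assumption is in place. To make this airtight you should first use the already-proved first half to force $\phi=1$ on the $P$-null part of the space, and then argue that if $\phi(\cdot,b)<1$ anywhere with positive $P$-measure it must be on the overlap $\{p>0,\,q>0\}$ (else $\phi$ would already achieve zero type-II risk on $b$, and no strict improvement is possible or needed). This closes the loop, but it is genuinely more delicate than the paper lets on.
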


Thus, going forward, we may safely assume that $\bigcup_{\Q\in\calQ} \supp(\Q) = \bigcup_{\P\in \calP}\supp(\P)$ (where $\supp(P)$ is the support of $P$). In the case of testing a simple null $\P$ against a simple alternative $\Q$, this implies that both $\Q$ and $P$ are absolutely continuous with respect to one another ($\Q\ll \P$ and $P\ll Q$). 
Because of this assumption, we will usually say $P$-almost surely, instead of $P$- and $Q$-almost surely.

We will also assume that there exists some $b$ such that $L_b(0,1)>1$. Otherwise a test family may always reject with probability 1 and remain type-I risk safe, making the problem uninteresting. This assumption is akin to not studying $\alpha=1$ in the traditional Neyman-Pearson paradigm, which is uninteresting for the same reason. We also make the necessary (quite modest) measure-theoretic assumptions so that~\eqref{eq:risk-as-adversary} holds. See Appendix~\ref{sec:risk-as-adversary} for precise details. Finally, going forward, we assume that we are testing a point null $\calP=\{P\}$ against a point alternative $\calQ=\{Q\}$. 

\edit{We end this section by noting how one might modify the notation we use throughout the paper. 

\begin{remark}[{\em Koning representation\/} of post-hoc hypothesis tests]
For the choice $L_b(0,1)=b$, \citet{koning2024continuous} recently suggested merging the loss into the decision space of the test. Instead of a test mapping to $[0,1]$, he rescales the range to $[0,L_b(0,1)]\supseteq [0,1]$. One could do the same thing throughout this paper. Such a transformation amounts to rescaling our usual test family $\delta$ and defining a new test family $f$ given by  $f(X,b) = L_b(0,1)\delta(X,b)$. In this case, type-I risk safety is the condition $\E_P[\sup_b f(X,b)]\leq 1$. That is, the test itself becomes an e-value, as opposed to the quantity $\sup_b L_b(0,\delta(X,b))$. While 
Koning's representation definitely has a strong appeal, 
in 
this paper we opt to work with the standard range of $[0,1]$ in the hope that it makes the framework more familiar to those new to post-hoc testing and e-values.
\end{remark}
}

\section{Warmup: Likelihood ratio tests}
\label{sec:np-tests}

In the case of a single loss function, the Neyman-Pearson lemma~\citep{neyman1933ix} implies that the likelihood ratio test is  most powerful. What can we say about likelihood ratio-style tests in a setting with multiple losses? 

For each fixed $b\in\calB$, let $\nptest(X,b)$ denote the likelihood ratio test on loss $ L_b(0,1)$. Let $\lr$ be the likelihood ratio between $Q$ and $P$. The likelihood ratio test has the form 
\begin{equation}
    \label{eq:np-test}
    \nptest(X,b) = \ind{\lr(X) > \kappa(b)} + \gamma \ind{\lr(X) = \kappa(b)}, 
\end{equation}
where $\kappa(b)$ and $\gamma$ are chosen such that 
\begin{equation*}
\E_P[\nptest(X,b)] = L_b^{-1}(0,1). 
\end{equation*}
We use the superscript ``\textsc{NP}'' to refer to Neyman-Pearson. Note that the second term $\gamma\ind{\lr(X) = \kappa(b)}$ is only relevant for discrete distributions. We refer to \citet{shao2008mathematical} for a modern treatment of the Neyman-Pearson lemma and the optimality of the likelihood ratio test. 

\edit{Let $\delta$ be an arbitrary $\Gamma$-admissible test family and suppose that it acts as $\nptest(\cdot,b^*)$ on some $b^*$ (that is, $\delta(X,b^*) = \nptest(X,b^*)$)}. How does it behave for $b\neq b^*$? 
\edit{Naively, one would like to set $\delta(\cdot,b)=\nptest(\cdot,b)$ on each $b$.} Such a test family would not be type-I risk safe, however. This is easy to see, even in the case of two losses. \edit{Suppose that $\delta(X,b_1) = \nptest(X,b_1)$ and $\delta(X,b_2)=\nptest(X,b_2)$ with $b_1<b_2$. }
Since $L_{b_2}(0,1)>L_{b_1}(0,1)$ we have $\kappa({b_2})>\kappa({b_1})$. If an adversary $B$ plays loss $b_2$ for $X$ with $\lr(X)\geq\kappa({b_2})$ and $b_1$ for  $X$ with $\kappa({b_1})\leq \lr(X)<\kappa({b_2})$, then $\E_P[L_B(0,\delta(X,B))] = L_{b_2}(0,1)\P(\lr(X)\geq \kappa({b_2})) + L_{b_1}(0,1)\Pr(\kappa(b_1)\leq \lr(X)<\kappa({b_2}))>1$, so $\risk_P(\delta)>1$.

However, for $b\leq b^*$, $\delta(\cdot,b)$ \emph{can} act as $\nptest(\cdot,b^*)$ and remain type-I risk safe. (In fact, in the discrete case, for those $X$ such that $\lr(X) = \kappa(b)$ it can even raise the probability of rejection due to the extra freedom afforded by the fact that $L_b(0,1) \leq L_{b^*}(0,1)$.)  
For $b>b^*$ meanwhile, $\delta(\cdot,b)$ must lower the probability of rejection, but may still reject.  That is, even in the continuous case, in contrast to the NP setting with a single loss function, if we allow randomized test families then admissible test families will {\em always\/} be randomized. Formally, Proposition~\ref{prop:np-decision-rules} demonstrates that, for any $\Gamma\supset\consta$, the only admissible test family which plays as $\nptest(\cdot,b^*)$ on $b^*$ has the following form: 
\begin{equation}
\label{eq:np-rule}
    \delta(X,b) = \min\left\{1, \frac{L_{b^*}(0,1)}{L_b(0,1)}\nptest(X,b^*)\right\}. 
\end{equation}
The proof is in Appendix~\ref{sec:proof-np-decision-rules}. One may prove the result directly, but instead we leverage some of the properties of general admissible rules proven in Section~\ref{sec:properties}. Even so, the proof is more involved than one might initially expect.

\begin{proposition}
\label{prop:np-decision-rules}
The test family $\delta$ defined by~\eqref{eq:np-rule} is $\Gamma$-admissible for any $\Gamma\supset\consta$. Conversely, if $\phi$ is $\Gamma$-admissible for any $\Gamma\supset\consta$ and there exists some $b^*$ such that $\phi(X,b^*) = \nptest(X,b^*)$ $P$-almost surely then $\phi$ acts as~\eqref{eq:np-rule} $P$-almost surely. 
\end{proposition}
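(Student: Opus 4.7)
The plan is to identify the test family $\delta$ defined by~\eqref{eq:np-rule} with the canonical thresholding test of the sharp e-variable $E(X) = L_{b^*}(0,1)\nptest(X, b^*)$ (so $E_\delta = E$, using the bidirectional map between e-variables and test families developed in Section~\ref{sec:properties}). Since $\E_P[E] = L_{b^*}(0,1) \cdot L_{b^*}^{-1}(0,1) = 1$, this e-variable is sharp, so $\delta$ is itself type-I risk safe. Both directions of the proposition will then rest on a single pointwise comparison lemma: any type-I risk safe $\phi$ with $\phi(X, b^*) = \nptest(X, b^*)$ $P$-a.s.\ satisfies $\phi(X, b) \leq \delta(X, b)$ $P$-a.s.\ for every $b \in \calB$.

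The key bound is proved by a tightness-forces-pointwise-equality argument. Pointwise, $\sup_b L_b(0,1)\phi(X, b) \geq L_{b^*}(0,1)\phi(X, b^*) = L_{b^*}(0,1)\nptest(X, b^*)$. Integrating under $P$, the left side is at most $1$ by type-I risk safety while the right side equals exactly $1$. The inequality must therefore be a $P$-a.s.\ equality: $\sup_b L_b(0,1)\phi(X, b) = L_{b^*}(0,1)\nptest(X, b^*)$ $P$-a.s. Rearranging gives $L_b(0,1)\phi(X, b) \leq L_{b^*}(0,1)\nptest(X, b^*)$ for every $b$, and combined with $\phi \leq 1$ this yields $\phi(X, b) \leq \min\{1, (L_{b^*}(0,1)/L_b(0,1))\nptest(X, b^*)\} = \delta(X, b)$ $P$-a.s., as required.

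For the forward direction, suppose $\phi$ is weakly preferable to $\delta$ relative to $\Gamma \supset \consta$. Taking $B \equiv b^*$ in~\eqref{eq:Pi-admissible} gives $\E_Q[\phi(X, b^*)] \geq \E_Q[\nptest(X, b^*)]$, and type-I risk safety with the same constant adversary gives $\E_P[\phi(X, b^*)] \leq L_{b^*}^{-1}(0,1)$. The Neyman-Pearson lemma---together with a Rao-Blackwellization of $\phi$ along the sufficient statistic $\lr(X)$ as in Section~\ref{sec:rao-blackwellization} to handle the atom $\{\lr = \kappa(b^*)\}$ in the discrete case---forces $\phi(X, b^*) = \nptest(X, b^*)$ $P$-a.s. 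The key bound then yields $\phi \leq \delta$ pointwise, so $\phi(X, B(X)) \leq \delta(X, B(X))$ $Q$-a.s.\ for any measurable $B$. Combined with weak preferability, this forces equality in~\eqref{eq:Pi-admissible} for every $B \in \Gamma$ and $Q \in \calQ$, so $\phi$ is not strictly preferable and $\delta$ is $\Gamma$-admissible.

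For the backward direction, let $\phi$ be $\Gamma$-admissible with $\phi(X, b^*) = \nptest(X, b^*)$ $P$-a.s. Type-I risk safety of $\phi$ may be assumed (or extracted from admissibility via the representation result of Section~\ref{sec:properties}), so the key bound gives $\phi \leq \delta$ $P$-a.s. Because $\delta$ is type-I risk safe and $\phi(X, B(X)) \leq \delta(X, B(X))$ $Q$-a.s.\ for every $B$, $\delta$ is weakly preferable to $\phi$ relative to $\Gamma$. Admissibility of $\phi$ forces equality in~\eqref{eq:Pi-admissible} for every $B \in \Gamma$ and $Q \in \calQ$; taking $B \equiv b$ for each $b \in \calB$ gives $\E_Q[\phi(X, b)] = \E_Q[\delta(X, b)]$, which combined with $\phi \leq \delta$ $Q$-a.s.\ yields $\phi(X, b) = \delta(X, b)$ $Q$-a.s., hence $P$-a.s.\ by mutual absolute continuity. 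The main obstacle is the discrete-case application of Neyman-Pearson, where the set of most-powerful tests is parametrized by the randomization on $\{\lr = \kappa(b^*)\}$; this is precisely what the representation and Rao-Blackwellization machinery of Section~\ref{sec:properties} is designed to resolve, which is why the proof is deferred to the appendix despite its short core.
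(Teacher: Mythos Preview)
Your proof is correct and follows essentially the same route as the paper: both hinge on the tightness argument that $E_\phi \geq L_{b^*}(0,1)\phi(\cdot,b^*) = L_{b^*}(0,1)\nptest(\cdot,b^*)$ integrates to exactly $1$ under $P$, forcing $E_\phi = L_{b^*}(0,1)\nptest(\cdot,b^*)$ and hence $\phi \leq \delta$ pointwise. Your organization is if anything slightly cleaner---by isolating the bound $\phi \leq \delta$ as a standalone lemma and using it symmetrically, you avoid the paper's invocation of Lemma~\ref{lem:representation} on the merely weakly preferable competitor (which as stated requires admissibility), and in the converse you replace the paper's explicit modification $\deltahat$ by the direct observation that $\delta$ is itself weakly preferable to $\phi$. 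The handling of the discrete-case atom $\{\lr = \kappa(b^*)\}$ is equally informal in both accounts; your appeal to Rao--Blackwellization does not by itself pin down $\phi(\cdot,b^*)$ pointwise, but neither does the paper's ``one has higher power than the other'' claim when the level set has more than one atom.
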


The representation of $\delta$ in~\eqref{eq:np-rule} foreshadows a more general pattern that we explore further in Section~\ref{sec:properties}. In particular, after noting that $E_\delta(X) = L_{b^*}(0,1) \nptest(X,b^*)$, we see that we can write $\delta(X,b) = \min\{1, E_\delta(X) / L_b(0,1)\}$ which is a representation that holds for all admissible tests (for any $\Gamma$) and illustrates a fundamental connection between e-values and test families.

It is worth noting that when $P$ and $Q$ are continuous, \eqref{eq:np-rule} can be rewritten as:
\begin{equation}
\label{eq:np-rule-continuous}
    \delta(X,b) = \begin{cases}
        \frac{L_{b^*}(0,1)}{L_b(0,1)}\nptest(X,b^*),& b>\bstar, \\ 
        \nptest(X,b^*), &b\leq \bstar.
    \end{cases}
\end{equation}
This representation is perhaps more intuitive to digest. 
If an adversary $B$ plays $B(X) = b^*$ for all $X$, the expected type-I risk $\E_P[L_B(1,\delta(X,B)]$ is precisely 1 by definition of the likelihood ratio test. 
For $b\leq b^*$ we have $L_b(0,1)\leq L_{b^*}(0,1)$ so an adversary playing such a $b$ can only lower the type-I risk and $\delta$ can play as $\nptest(X,b^*)$. For $b>b^*$, $\delta$ may still reject, but it must lower its rejection probability to make up for the increase in type-I risk. We set $\delta$ such that $L_b(0,1)\delta(X,b) = L_{b^*}(0,1) \nptest(X,b^*)$ so that the type-I risk stays the same. \edit{We note that \citet{koning2023markov} also studied~\eqref{eq:np-rule-continuous} (see his example 9). He cites it as an example of a post-hoc test, though he does not  prove that it is admissible.}

The test family in~\eqref{eq:np-rule-continuous} illustrates a general phenomenon we discussed previously, namely that admissible rules will always be randomized (or, more precisely, there will exist losses on which the test will be randomized), even for continuous distributions. Because randomization may not be desirable, it is  reasonable to search for binary test families. In this case, we can show that 
\begin{equation}
\label{eq:np-rule-binary}
    \vp(X,b) = 
    \begin{cases}
        0,& b>\bstar, \\ 
        \nptest(X,b^*), &b\leq \bstar,
    \end{cases}
\end{equation}
is admissible when $P$ and $Q$ are assumed to be continuous. When $P$ and $Q$ are discrete, then $\nptest(\cdot,b^*)$ is randomized---see~\eqref{eq:np-test}---so this case is not of interest. The proof of the following proposition is in Appendix~\ref{sec:proof-binary-np-rules}. 
\begin{proposition}
\label{prop:binary-np-rules}
Fix any $\Gamma\supset\consta$ and 
suppose $P$ and $Q$ are continuous. 
If we restrict our attention to binary test families, then \eqref{eq:np-rule-binary} is $\Gamma$-admissible. Moreover, if $\psi$ is any binary $\Gamma$-admissible test family and there exists some $b^*$ such that $\psi(X,b^*) = \nptest(X,b^*)$, then $\psi$ acts as~\eqref{eq:np-rule-binary} $P$-almost surely.  \end{proposition}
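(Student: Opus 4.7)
\textbf{Proof proposal for Proposition~\ref{prop:binary-np-rules}.} My plan is to mirror the structure of the proof of Proposition~\ref{prop:np-decision-rules} but exploit the fact that in the continuous case the Neyman–Pearson test at level $L_{b^*}^{-1}(0,1)$ is a.s.~unique among binary tests. Let $A^* := \{\lr(X) > \kappa(b^*)\}$, so that in the continuous case $\nptest(X,b^*) = \ind{X\in A^*}$ and $P(A^*) = L_{b^*}^{-1}(0,1)$. First, a quick check that $\vp$ itself is type-I risk safe: for $b \leq b^*$ we have $L_b(0,1)\vp(X,b) \leq L_{b^*}(0,1)\ind{X\in A^*}$ and for $b > b^*$ we have $\vp(X,b)=0$, so $E_\vp(X) = L_{b^*}(0,1)\ind{X\in A^*}$ and $\E_P[E_\vp(X)] = 1$.

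For admissibility, I would suppose a binary type-I risk safe $\psi'$ is weakly preferable to $\vp$ and show this forces $\psi' = \vp$ $P$-a.s. Rewriting weak preferability as $\E_Q[L_B(1,0)\psi'(X,B)] \geq \E_Q[L_B(1,0)\vp(X,B)]$ and plugging in the constant adversary $B\equiv b^*$ (which lies in $\consta\subset\Gamma$) gives $\E_Q[\psi'(X,b^*)] \geq \E_Q[\nptest(X,b^*)]$. Combined with type-I risk safety applied at $b^*$, which yields $\E_P[\psi'(X,b^*)] \leq L_{b^*}^{-1}(0,1)$, the Neyman–Pearson lemma together with continuity of $P$ (so $P(\lr = \kappa(b^*))=0$) forces $\psi'(X,b^*) = \nptest(X,b^*)$ $P$-a.s. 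The main obstacle of the proof is really controlling the behaviour of $\psi'(\cdot,b)$ for $b\neq b^*$, which I handle next.

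Now I would pin down $\psi'(\cdot,b)$ for $b \neq b^*$ using type-I risk safety jointly across $b$. For $b > b^*$, let $C = \{\psi'(X,b) = 1\}$ and note that $E_{\psi'}(X) \geq L_{b^*}(0,1)\ind{A^*} \vee L_b(0,1)\ind{C}$. Taking expectations and using $P(A^*)L_{b^*}(0,1)=1$ leads, after splitting $C$ over $A^*$ and $A^{*c}$, to $L_b(0,1)P(C\setminus A^*) + (L_b(0,1) - L_{b^*}(0,1))P(C\cap A^*) \leq 0$. Since $L_b(0,1) > L_{b^*}(0,1)$, both terms are nonnegative, so $P(C) = 0$ and $\psi'(X,b)=0$ $P$-a.s. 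For $b < b^*$, a similar computation with $D = \{\psi'(X,b)=1\}$ shows that any mass of $D\setminus A^*$ contributes $L_b(0,1) P(D\setminus A^*)$ on top of the already-saturated contribution $L_{b^*}(0,1)P(A^*) = 1$, forcing $P(D\setminus A^*)=0$, i.e.\ $\psi'(X,b) \leq \ind{X\in A^*} = \vp(X,b)$ $P$-a.s. Combining the three cases yields $\psi'(X,b) \leq \vp(X,b)$ $P$-a.s.\ (hence $Q$-a.s.\ by absolute continuity) for every $b$, so $\E_Q[L_B(1,0)\psi'(X,B)] \leq \E_Q[L_B(1,0)\vp(X,B)]$ for every adversary $B$; together with weak preferability this forces equality, ruling out strict preferability.

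For the converse, let $\psi$ be any binary $\Gamma$-admissible test with $\psi(X,b^*) = \nptest(X,b^*)$ $P$-a.s. The type-I risk computations above apply verbatim to $\psi$ itself, giving $\psi(X,b) = 0$ $P$-a.s.\ for $b > b^*$ and $\psi(X,b) \leq \nptest(X,b^*)$ $P$-a.s.\ for $b \leq b^*$. If for some $b_0 < b^*$ the inequality were strict on a set of positive $P$-measure (hence positive $Q$-measure), I would construct $\psi'$ by replacing $\psi(\cdot,b_0)$ with $\nptest(\cdot,b^*)$ and leaving the other coordinates unchanged. By the first-part computation, $\psi'$ remains type-I risk safe (since $E_{\psi'} = E_\vp$), and by the constant adversary $B\equiv b_0$ it is strictly preferable to $\psi$, contradicting admissibility. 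Hence $\psi(X,b) = \nptest(X,b^*)$ for all $b \leq b^*$ and $\psi(X,b) = 0$ for all $b > b^*$, $P$-a.s., as claimed.
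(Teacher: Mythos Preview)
Your proof is correct and follows essentially the same outline as the paper's: verify type-I risk safety of $\vp$, use the constant adversary $B\equiv b^*$ together with the Neyman--Pearson lemma to pin down the $b^*$ coordinate of any weakly preferable $\psi'$, then determine the remaining coordinates via type-I risk safety, and handle the converse by showing that any strict gap at some $b_0<b^*$ permits an explicit improvement.

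The one genuine difference is in how the non-$b^*$ coordinates of $\psi'$ are controlled. The paper first argues that $E_{\psi'}(X)=L_{b^*}(0,1)\nptest(X,b^*)$ $P$-a.s.\ (since $E_{\psi'}\geq L_{b^*}(0,1)\ind{A^*}$ and both integrate to~$1$) and then invokes the canonical representation of Lemma~\ref{lem:representation-binary} to read off $\psi'(X,b)=\ind{E_{\psi'}(X)\geq L_b(0,1)}=\vp(X,b)$. You instead work coordinate-by-coordinate, lower-bounding $E_{\psi'}$ by $L_{b^*}(0,1)\ind{A^*}\vee L_b(0,1)\ind{C}$ and integrating to force $P(C)=0$ (for $b>b^*$) or $P(D\setminus A^*)=0$ (for $b<b^*$). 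Your route is slightly longer but more self-contained: it does not rely on the forward reference to Lemma~\ref{lem:representation-binary}, which is only proved later in Section~\ref{sec:binary}, and it sidesteps the mild circularity of applying that lemma to a test that is merely weakly preferable rather than admissible. The paper's route is cleaner once the canonical representation is available and makes the role of $E_{\psi'}$ more transparent.
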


Just as~\eqref{eq:np-rule} was illustrative of a more general representation of test families when randomization is allowed, \eqref{eq:np-rule-binary} is illustrative of a general representation in the binary case. Indeed, Lemma~\ref{lem:representation-binary} in Section~\ref{sec:properties} will show that, for any $\Gamma\supset\consta$,  a $\Gamma$-admissible binary test $\vp$ can be written as $\vp(X,b) = \ind{E_\delta(X) \geq L_b(0,1)}$.

\section{Properties of admissible rules}
\label{sec:properties}

Here we prove properties of $\Gamma$-admissible families that  hold  for general  $\Gamma\supset\consta$, the most important results being  lemmas~\ref{lem:representation} and \ref{lem:representation-binary}, which provide explicit representations of any admissible test $\delta$ in terms of its associated e-value $E_\delta$ (defined in~\eqref{eq:E_delta}), generalizing equations~\eqref{eq:np-rule} and~\eqref{eq:np-rule-continuous}. 
All omitted proofs can be found in Appendix~\ref{sec:proofs}.

We begin by proving an intuitive property of any test family: If it can be modified to reject with strictly higher probability on some loss without sacrificing type-I risk safety, then it cannot be admissible. This fact is immediate in the traditional Neyman-Pearson framework for any uniformly most powerful test (indeed, it is the definition of being uniformly most powerful). But it requires proof in the post-hoc setting.  

\begin{lemma}
\label{lem:phi>delta}
    Let $\phi, \delta$ be two type-I risk safe test families such that $\phi(X,b) \geq \delta(X,b)$ $P$-almost surely for all $b\in\calB$. If $\phi$ is anywhere strictly greater than $\delta$, i.e., there exists some $b^*\in\calB$ such that $P(\phi(X,b^*)> \delta(X,b^*))>0$, then $\delta$ is $\Gamma$-inadmissible for any $\Gamma\supset\consta$. The same statement holds if $\phi$ and $\delta$ are binary test families. 
\end{lemma}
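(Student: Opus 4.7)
The plan is to show $\delta$ is $\Gamma$-inadmissible by exhibiting $\phi$ itself as a strictly preferable test family, using a constant adversary as the witness. Since $\consta \subseteq \Gamma$, it suffices to verify that $\phi$ satisfies weak preferability \eqref{eq:Pi-admissible} against $\delta$ for every $B \in \Gamma$, and that strict preferability holds for at least one $B \in \consta$; the type-I risk safety of $\phi$ is given by hypothesis.

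The key algebraic move is the identity $L_b(1, \delta(X,b)) = L_b(1,0)(1 - \delta(X,b))$, which follows from $L_b(1,1) = 0$ and holds identically with $\phi$ in place of $\delta$. With this in hand, the hypothesis $\phi(X,b) \geq \delta(X,b)$ $P$-a.s.\ translates pointwise into $L_b(1, \phi(X,b)) \leq L_b(1, \delta(X,b))$ $P$-a.s. Invoking the standing absolute continuity $Q \sim P$ recorded in Observation~\ref{eq:Q<<P}, the same inequality holds $Q$-a.s. Evaluating at $b = B(X)$ for a measurable $B$ and integrating against $Q$ then yields \eqref{eq:Pi-admissible} for every $B \in \Gamma$ uniformly.

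For the strict step I would take the constant adversary $B \equiv b^*$, which belongs to $\consta \subseteq \Gamma$. The gap in type-II risk then factors as
\[
\E_Q[L_{b^*}(1, \delta(X, b^*))] - \E_Q[L_{b^*}(1, \phi(X, b^*))] = L_{b^*}(1,0)\, \E_Q\bigl[\phi(X, b^*) - \delta(X, b^*)\bigr].
\]
Since losses take values in $\Re_{>0}$ off the trivial entries, $L_{b^*}(1,0) > 0$, and the hypothesis $P(\phi(X,b^*) > \delta(X,b^*)) > 0$ transfers to $Q(\phi(X,b^*) > \delta(X,b^*)) > 0$ by $P \sim Q$. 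Combined with $\phi \geq \delta$ $Q$-a.s., this makes the expectation on the right strictly positive, producing a strict instance of \eqref{eq:Pi-admissible} and hence $\Gamma$-inadmissibility.

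The binary case follows verbatim: the factoring identity and all pointwise inequalities are algebraic in the decision values and are insensitive to whether $\phi, \delta$ take values in $[0,1]$ or in $\{0,1\}$. The only real subtlety — and the one step I expect to be easy to overlook — is remembering to invoke the $P$–$Q$ equivalence from Observation~\ref{eq:Q<<P} to promote ``strictly greater somewhere under $P$'' into ``strictly greater somewhere under $Q$''; without this the strict inequality in the type-II risk comparison would not transfer from the null to the alternative.
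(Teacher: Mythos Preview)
Your proof is correct and follows essentially the same approach as the paper: both use the identity $L_b(1,\cdot)=L_b(1,0)(1-\cdot)$ to convert the pointwise inequality $\phi\ge\delta$ into weak preferability for every $B\in\Gamma$, invoke the $P\sim Q$ equivalence to transfer the positive-probability event to $Q$, and then witness strictness with the constant adversary $B\equiv b^*\in\consta\subseteq\Gamma$. Your write-up is slightly more explicit about needing $L_{b^*}(1,0)>0$, which is implicit in the paper's argument.
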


One property which follows from Lemma~\ref{lem:phi>delta} is that any admissible test must be decreasing in the loss. In other words, as the loss increases, a test should only become more conservative. \edit{This fact was also noted by \citet[Remark 4]{koning2023markov}, though not related to admissibility.}

\begin{lemma}
    \label{lem:decreasing_in_b}
    Fix any $\Gamma\supset\consta$ and let $\delta$ be $\Gamma$-admissible. Then $\delta(X,b_1)\geq  \delta(X,b_2)$ $P$-almost surely if and only if $b_1\leq b_2$. The same statement holds for binary test families. 
\end{lemma}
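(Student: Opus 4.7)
The plan is to prove the substantive forward direction ($b_1\leq b_2 \Rightarrow \delta(X,b_1)\geq \delta(X,b_2)$ $P$-a.s.) by contradiction using Lemma~\ref{lem:phi>delta}; the converse implication then follows by symmetry (applying the forward direction with $b_1,b_2$ swapped). Throughout I would work with the randomized case first, then observe that the same construction goes through verbatim in the binary case.

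For the forward direction, fix $b_1\leq b_2$ and assume for contradiction that $A = \{X: \delta(X,b_1)<\delta(X,b_2)\}$ has $P(A)>0$. I would build a ``repair'' $\phi$ of $\delta$ by copying the larger rejection probability at $b_2$ into the $b_1$-slice, leaving every other slice alone:
\begin{equation*}
\phi(X,b) = \begin{cases} \delta(X,b_1)\vee \delta(X,b_2), & b=b_1,\\ \delta(X,b), & b\neq b_1. \end{cases}
\end{equation*}
The map $\phi$ is jointly measurable because $\{b_1\}$ is Borel (Assumption~\ref{asmp:Borel}) and the maximum of measurable slices is measurable. Since $b\mapsto L_b(0,1)$ is increasing, for $X\in A$ we have
\begin{equation*}
L_{b_1}(0,1)\,\phi(X,b_1) = L_{b_1}(0,1)\,\delta(X,b_2) \leq L_{b_2}(0,1)\,\delta(X,b_2) \leq E_\delta(X),
\end{equation*}
and the same bound is trivial for $X\notin A$. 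All other slices are untouched, so $E_\phi(X)\leq E_\delta(X)$ pointwise, giving $\risk_P(\phi)\leq \risk_P(\delta)\leq 1$. By construction $\phi\geq \delta$ everywhere, and $P(\phi(X,b_1)>\delta(X,b_1)) = P(A) > 0$, so Lemma~\ref{lem:phi>delta} yields $\Gamma$-inadmissibility of $\delta$---the desired contradiction.

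For binary test families, exactly the same construction works, since the pointwise maximum of two $\{0,1\}$-valued functions remains $\{0,1\}$-valued and Lemma~\ref{lem:phi>delta} is stated for the binary setting as well. The main---and quite mild---obstacle is verifying that the single-slice modification simultaneously preserves measurability and the type-I risk bound; both reduce to the monotonicity $L_{b_1}(0,1)\leq L_{b_2}(0,1)$, which is precisely what allows one to ``reuse'' the higher rejection probability at the smaller loss without inflating the risk.
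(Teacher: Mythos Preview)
Your argument for the forward direction is correct and essentially identical to the paper's: both build a modified test by copying the value $\delta(X,b_2)$ into the $b_1$-slot, use $L_{b_1}(0,1)\leq L_{b_2}(0,1)$ to see that $E_\phi=E_\delta$ (hence type-I risk safety is preserved), and invoke Lemma~\ref{lem:phi>delta}. Your use of $\delta(X,b_1)\vee\delta(X,b_2)$ globally is just a clean repackaging of the paper's ``replace on $A$ only'' construction.

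One small remark: your ``symmetry'' claim for the converse is not quite right---applying the forward direction with $b_1,b_2$ swapped yields $\delta(X,b_2)\geq\delta(X,b_1)$ $P$-a.s.\ when $b_2\leq b_1$, which does \emph{not} preclude $\delta(X,b_1)\geq\delta(X,b_2)$ $P$-a.s.\ (e.g.\ equality). The paper's own proof likewise only addresses the forward direction, so you are not missing anything the paper provides; the ``only if'' as literally stated is imprecise.
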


A second consequence of Lemma~\ref{lem:phi>delta} is that the e-value of an admissible test cannot be strictly dominated by the e-value of another admissible test. Consequently, admissible tests define sharp e-values (recall that $E_\delta(X) = \sup_b L_b(0,\delta(X,b))$). The proof is postponed to Appendix~\ref{sec:proof-Ephi>Edelta}. Note that the properties of $\Gamma$-admissible tests given by Lemmas~\ref{lem:decreasing_in_b} and \ref{lem:Ephi>Edelta} are both illustrated by the Neyman-Pearson style tests of Section~\ref{sec:np-tests}.

\begin{lemma}
\label{lem:Ephi>Edelta}
    Let $\phi, \delta$ be two type-I risk safe test families. If $E_\phi(X)\geq E_\delta(X)$ $P$-almost surely and $P(E_\phi(X) > E_\delta(X))>0$ then $\delta$ is $\Gamma$-inadmissible for any $\Gamma\supset\consta$. Consequently, if $E_\delta$ is not sharp then it is  $\Gamma$-inadmissible for any $\Gamma\supset\consta$. The same statement holds if $\phi$ and $\delta$ are binary test families and $P$ and $Q$ are continuous.  
\end{lemma}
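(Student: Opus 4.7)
The plan is to reduce the lemma to the previously established Lemma~\ref{lem:phi>delta}. From the hypothesized $\phi$ I would construct a type-I risk safe test $\tilde\delta$ that pointwise dominates $\delta$ and is strictly greater at a single fixed $b^*\in\calB$ on a set of positive $P$-measure. The natural candidate is the canonical test obtained from the e-value $E_\phi$ (the form that appeared already in equation~\eqref{eq:np-rule}):
\[
\tilde\delta(X,b) \;=\; \min\!\left\{1,\; \frac{E_\phi(X)}{L_b(0,1)}\right\}.
\]
Its associated e-value satisfies $E_{\tilde\delta}(X) = \sup_{b}\min\{L_b(0,1),\, E_\phi(X)\}\leq E_\phi(X)$, so $\risk_P(\tilde\delta)\leq \E_P[E_\phi]\leq 1$. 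Pointwise domination $\tilde\delta(X,b)\geq\delta(X,b)$ is immediate from $L_b(0,1)\delta(X,b)\leq E_\delta(X)\leq E_\phi(X)$ together with $\delta(X,b)\leq 1$.

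The main obstacle is producing a single $b^*\in\calB$ on which strict inequality holds with positive probability, since Lemma~\ref{lem:phi>delta} requires a fixed loss. Let $A=\{X: E_\phi(X)>E_\delta(X)\}$, which is $P$-nonnull by assumption. Because every type-I risk safe test has pointwise e-value at most $\sup\calL$, the set $A\cap\{E_\delta(X)=\sup\calL\}$ must be empty (otherwise $E_\phi>\sup\calL$ there). I would then write $A=\bigcup_n A_n$ with $A_n:=A\cap\{E_\delta<c_n\}$ for a sequence $c_n\uparrow\sup\calL$ (take $c_n=n$ if $\calL$ is unbounded and $c_n=\sup\calL-1/n$ otherwise); countable additivity yields $P(A_n)>0$ for some $n$. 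Pick any $b^*$ with $L_{b^*}(0,1)>c_n$. On $A_n$ we have $L_{b^*}(0,1)>E_\delta(X)$, forcing $\delta(X,b^*)<1$; splitting on whether $L_{b^*}(0,1)$ lies below or above $E_\phi(X)$ gives either $\tilde\delta(X,b^*)=1>\delta(X,b^*)$ or $\tilde\delta(X,b^*)=E_\phi(X)/L_{b^*}(0,1) > E_\delta(X)/L_{b^*}(0,1)\geq \delta(X,b^*)$. Lemma~\ref{lem:phi>delta} then delivers $\Gamma$-inadmissibility.

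The ``consequently'' clause follows from the first part: if $E_\delta$ is not sharp, so $\E_P[E_\delta]<1$, then $\tilde E:=E_\delta+\epsilon$ is an e-value for any $\epsilon\in(0,\,1-\E_P[E_\delta])$, and I would apply the first part to the canonical test built from $\tilde E$, whose e-value strictly exceeds $E_\delta$ on $\{E_\delta<L_{b^*}(0,1)-\epsilon\}$ for any $b^*$ with $L_{b^*}(0,1)>1$; this set is nonnull because otherwise $\E_P[E_\delta]\geq L_{b^*}(0,1)>1$, a contradiction. For the binary version the same outline works with $\tilde\vp(X,b)=\ind{L_b(0,1)\leq E_\phi(X)}$ in place of $\tilde\delta$: type-I risk safety is immediate from $E_{\tilde\vp}(X)=\sup\{L_b(0,1):L_b(0,1)\leq E_\phi(X)\}\leq E_\phi(X)$, and pointwise domination holds because $\vp(X,b)=1$ forces $L_b(0,1)\leq E_\vp(X)\leq E_\phi(X)$. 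Continuity of $P$ and $Q$ smooths the boundary at the indicator threshold when identifying the $b^*$ that realizes strict inequality on a nonnull set.
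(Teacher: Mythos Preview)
Your randomized first-part argument is correct and, in fact, cleaner than the paper's. The paper's proof observes that $E_\delta(X)<E_\phi(X)$ forces $\delta(X,b)<\phi(X,b)$ for \emph{some} $b=b(X)$ and then invokes Lemma~\ref{lem:phi>delta} directly on the pair $(\phi,\delta)$; but that lemma requires pointwise domination $\phi(X,b)\ge\delta(X,b)$ for \emph{all} $b$, which does not follow from $E_\phi\ge E_\delta$. Passing to the canonical test $\tilde\delta$ built from $E_\phi$ supplies exactly the missing pointwise domination, and your $A_n$ device to extract a single $b^*$ is the right way to finish.

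For the ``consequently'' clause you reduce to the first part via the canonical test built from $\tilde E=E_\delta+\epsilon$, whereas the paper constructs a dominating test directly by bumping $\delta$ at one loss on a region where $\delta(\cdot,b_0)<1$. Both routes work. One arithmetic slip: if $\{E_\delta<L_{b^*}(0,1)-\epsilon\}$ were null you would only get $\E_P[E_\delta]\ge L_{b^*}(0,1)-\epsilon$, not $\ge L_{b^*}(0,1)$; the contradiction still follows once you combine this with $\epsilon<1-\E_P[E_\delta]$ and $L_{b^*}(0,1)>1$, but the inequality as written is off by $\epsilon$.

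The binary case is where your sketch has a genuine gap. With $\tilde\vp(X,b)=\ind{L_b(0,1)\le E_\phi(X)}$ you correctly obtain type-I safety and pointwise domination, but the $A_n$ device no longer yields strict inequality at your chosen $b^*$: on $A_n\cap\{E_\phi(X)<L_{b^*}(0,1)\}$ you get $\tilde\vp(X,b^*)=0=\psi(X,b^*)$, and nothing forces $A_n\cap\{E_\phi\ge L_{b^*}(0,1)\}$ to be nonnull. The appeal to continuity of $P,Q$ does not address this (indeed one can rescue the binary first part without continuity, by a countability argument: choose a rational $q$ with $P(E_\psi<q<E_\phi)>0$ and then locate $\ell\in\calL$ with $\ell>q$ and $P(E_\psi<\ell\le E_\phi)>0$). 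The paper handles the binary ``consequently'' clause by a direct construction instead---setting $\vphat(X,b_0)=1$ on a subset $A_0$ of prescribed $P$-mass $(1-\E_P[E_\psi])/L_{b_0}(0,1)$---and this is where continuity of $P$ is actually used.
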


Given a test family $\delta$ we can study the properties of $E_\delta$, sometimes giving necessary and sufficient conditions on $E_\delta$ for $\delta$ to be admissible. But we will also be interested in the other direction. That is, given an e-variable $E$ for $P$, the next sections will define an associated test family $\delta_E$---Section~\ref{sec:randomized} in the randomized case and Section~\ref{sec:binary} in the binary case. We may then ask about the properties of $E$ that lead to admissible tests. A helpful concept when reasoning about the behavior of e-variables is that of \emph{compatibility}. Recall that 
\begin{equation}
\label{eq:loss-space}
    \calL \equiv \{L_b(0,1):b\in\calB\}. 
\end{equation}

\begin{definition}
    \label{def:compatible}
    We say an e-variable $E$ for $P$ is \emph{compatible} with $\calL$ if $E(X) \in \calL\cup\{0\}$ $P$-almost surely. If $\calL$ is understood from context, we will often simply say that $E$ is compatible. 
\end{definition}

Lemma~\ref{lem:compatible} will show that if $E$ defines a $\Gamma$-admissible binary test family for any $\Gamma$ then $E$ is compatible. Such a general result does not hold in the randomized case, which can be seen by considering the e-values of Neyman-Pearson tests when the distributions are discrete. 
However, Lemma~\ref{lem:compatible-inc} proves that when $P$ and $Q$ are continuous and the randomized test $\delta_E$ is $\consta$-admissible, then $E$ is compatible. Interestingly, the issue of compatibility also arose for \citet{grunwald2024beyond}, even when studying a different notion of admissibility. He called the condition ``richness.'' 

Beyond this point, the properties of randomized test families and binary test families begin to diverge so we give a separate treatment of each.

\subsection{Randomized test families} 
\label{sec:randomized}

Throughout this subsection we assume that all test families allow for randomization. The distributions $P$ and $Q$ can be continuous or discrete, unless stated otherwise. As usual, we assume they have the same support. Our first result provides an explicit form for any test family in terms of its associated e-value. 

\begin{lemma}
\label{lem:representation}
    Fix any $\Gamma\supset\consta$. 
     If $\delta$ is $\Gamma$-admissible then for all $b\in\calB$,  
     \begin{equation}
     \label{eq:canonical}
         \delta(X,b) = \min\left\{1,\frac{E_\delta(X)}{ L_b(0,1)}\right\}, \quad \text{P-almost surely}. 
     \end{equation}
\end{lemma}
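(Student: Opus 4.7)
The plan is to show that the stated equality reduces to a simple dominance argument via Lemma~\ref{lem:phi>delta}. The direction $\delta(X,b) \le \min\{1, E_\delta(X)/L_b(0,1)\}$ is immediate and does not need admissibility: by the very definition of $E_\delta$, we have $L_b(0,1)\delta(X,b) \le \sup_{b'\in\calB} L_{b'}(0,1)\delta(X,b') = E_\delta(X)$, so $\delta(X,b) \le E_\delta(X)/L_b(0,1)$ for every $b$ and every $X$; combined with $\delta \le 1$, the $\le$ inequality holds pointwise. All the work is in proving that admissibility rules out strict inequality on any set of positive $P$-measure.

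To do this, I would introduce the candidate improvement
\[
    \phi(X,b) := \min\!\left\{1,\ \frac{E_\delta(X)}{L_b(0,1)}\right\}.
\]
Under Assumption~\ref{asmp:Borel}, $E_\delta$ is $\Sigma_\calX$-measurable (this is essentially the content of Lemma~\ref{lem:risk-as-adversary}) and $b \mapsto L_b(0,1)$ is $\Sigma_\calB$-measurable, so $\phi$ is a valid $[0,1]$-valued, jointly measurable test family. By construction $\phi \ge \delta$ pointwise for every $b$.

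Next I would verify that $\phi$ is type-I risk safe. A direct calculation gives $L_b(0,1)\phi(X,b) = \min\{L_b(0,1),\, E_\delta(X)\} \le E_\delta(X)$, so $E_\phi(X) = \sup_b L_b(0,1)\phi(X,b) \le E_\delta(X)$; in the other direction, $\phi \ge \delta$ immediately yields $E_\phi \ge E_\delta$. Hence $E_\phi = E_\delta$ $P$-almost surely, and risk safety of $\delta$ transfers to $\phi$.

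Finally I would close by contradiction: if the claimed equality fails, there exists $b^*\in\calB$ with $P(\delta(X,b^*) < \min\{1, E_\delta(X)/L_{b^*}(0,1)\}) > 0$, i.e.\ $P(\phi(X,b^*) > \delta(X,b^*)) > 0$. Together with $\phi \ge \delta$ everywhere and the type-I risk safety of $\phi$, Lemma~\ref{lem:phi>delta} then declares $\delta$ to be $\Gamma$-inadmissible for every $\Gamma \supset \consta$, contradicting the hypothesis. Hence the equality~\eqref{eq:canonical} holds $P$-almost surely for each $b$. The only conceptual hurdle is isolating the right candidate $\phi$ and recognising that it inherits the e-value $E_\delta$ unchanged; once that is in hand, Lemma~\ref{lem:phi>delta} does all the heavy lifting, and no explicit appeal to the structure of $\Gamma$ beyond $\Gamma\supset\consta$ is needed.
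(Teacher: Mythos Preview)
Your proof is correct and follows essentially the same approach as the paper: establish the $\le$ direction from the definition of $E_\delta$, then for the reverse direction construct a dominating test family with the same associated e-value and invoke Lemma~\ref{lem:phi>delta}. The only cosmetic difference is that you define the improvement $\phi$ globally as $\min\{1,E_\delta(X)/L_b(0,1)\}$ for all $(X,b)$, whereas the paper modifies $\delta$ only on the offending set $A$ and only at the specific $b$; your global definition is arguably cleaner since $E_\phi = E_\delta$ follows immediately without splitting the integral over $A$ and its complement.
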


We will sometimes call~\eqref{eq:canonical} the \emph{canonical} representation of $\delta$, or simply say that $\delta$ is canonical if written as such. Lemma~\ref{lem:representation} will be particularly useful in the upcoming theorems on admissibility, since it lets us assume that all test families we deal with are canonical. \edit{We note that \citet[Corollary 2]{koning2023markov} shows that the test in~\eqref{eq:canonical} is type-I risk safe.}

Just as test families define e-variables, it will be convenient to enable e-variables to define test families. In view of Lemma~\ref{lem:representation}, given an e-variable $E$ for $\calP$, we define the corresponding test family as 
\begin{equation}
\label{eq:delta-from-e}
    \delta_E(X,b) \equiv \min\left\{1, \frac{E(X)}{L_b(0,1)}\right\}.
\end{equation}
It is easy to verify that this definition is consistent in the sense that $\delta_{E_\delta}(X,b)  = \delta(X,b)$ for a $\Gamma$-admissible test $\delta$ (this follows immediately from Lemma~\ref{lem:representation}). Further, if we begin with an e-variable $E$, define $\delta_E$ and then compute $E_{\delta_E}$, we recover $E$ under weak assumptions. To wit, 
\begin{align}
\label{eq:EdeltaE_vs_E}
    E_{\delta_E}(X) &= \sup_{b\in\calB} L_b(0,\delta_E(X,b)) 
    = \sup_{b\in\calB} \min\{L_b(0,1), E(X)\} = E(X) \wedge \sup_b L_b(0,1).
\end{align}
If $\sup L_b(0,1) = \infty$ then the final quantity is $E(X)$. In fact, we show next that if $\delta_E$ is admissible, then  $E(X) \wedge \sup_b L_b(0,1) = E(X)$ $P$-almost surely, which suffices for us to conclude that the final expression in \eqref{eq:EdeltaE_vs_E} equals $E$.

\begin{lemma}
    \label{lem:E<supb}
    Let $E$ be an e-variable for $P$. If $P(E(X) > \sup_bL_b(0,1)) > 0$ then $\delta_E$ is $\Gamma$-inadmissible for any $\Gamma\supset\consta$. Otherwise, $E_{\delta_E} = E$ $P$-almost surely, and in particular this holds if $\delta_E$ is $\Gamma$-admissible.
\end{lemma}

The proof can be found in Appendix~\ref{sec:proof-E<supb}. It is helpful to view Lemma~\ref{lem:E<supb} as a weaker form of compatibility.  It doesn't guarantee that $E$ takes values in $\calL\cup\{0\}$, but it does guarantee that it won't take values strictly larger than those in $\calL$. As we discussed earlier, Lemma~\ref{lem:E<supb} can be strengthened to bonafide compatibility when we consider admissibility with respect to constant adversaries. 
We now move onto studying binary test families.

\subsection{Binary test families}
\label{sec:binary}

Throughout this section we assume that $P$ and $Q$ are continuous. We begin with the analogue of Lemma~\ref{lem:representation} but for binary test families. The same representation was given recently in \citet[Chapter 4]{ramdas2024hypothesis}, though stated as a definition, not as the unique form of any admissible binary test family. \edit{Let us note that \citet[Theorem 2 and Corollary 2]{koning2023markov} proves a very similar result. In particular, after translating his result to our setting, he shows that given an e-variable $E$, we can obtain a type-I risk safe test via $(X,b)\mapsto \ind{L_b(0,1) \leq E(X)}$. Conversely, given a type-I risk safe test, then $E_\phi = \sup\{L_b(0,1) \leq E(X)\}$ is an e-value. We show that, starting with $\phi$, one can write $\phi(X,b) = \ind{L_b(0,1)\leq E_\phi(X)}$ if $\phi$ is admissible. This also follows from Koning's result after only slightly more work.    }

\begin{lemma}
\label{lem:representation-binary}
Fix any $\Gamma\supset\consta$.
    If $\vp$ is a $\Gamma$-admissible binary test family then, for all $b\in\calB$, 
    \begin{equation}
    \label{eq:canonical-binary}
        \vp(X,b) = \ind{ L_b(0,1) \leq E_{\vp}(X)} \quad P\text{-almost surely}.
    \end{equation}
\end{lemma}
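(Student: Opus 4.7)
The strategy will mirror the proof of Lemma~\ref{lem:representation}: establish the two inequalities $\vp(X,b) \leq \ind{L_b(0,1) \leq E_\vp(X)}$ and $\vp(X,b) \geq \ind{L_b(0,1) \leq E_\vp(X)}$ separately, each holding $P$-almost surely (equivalently $Q$-almost surely, by the common-support convention).

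The first inequality is forced by the very definition of $E_\vp$ and does not require admissibility. If $\vp(X,b) > \ind{L_b(0,1) \leq E_\vp(X)}$ at some point, then because $\vp$ is binary we must have $\vp(X,b) = 1$ and $L_b(0,1) > E_\vp(X)$. But $E_\vp(X) = \sup_{c\in\calB} L_c(0,1)\vp(X,c) \geq L_b(0,1)\vp(X,b) = L_b(0,1)$, a contradiction. So this direction holds pointwise on all of $\calX$.

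The reverse inequality is where admissibility enters, via a modification-and-dominate argument. Suppose toward contradiction there exist $b \in \calB$ and a set $A \subset \calX$ with $P(A) > 0$ such that $\vp(X,b) = 0$ while $L_b(0,1) \leq E_\vp(X)$ for all $X \in A$. Define a new binary test family $\widehat{\vp}$ that agrees with $\vp$ everywhere except that $\widehat{\vp}(X,b) = 1$ for $X \in A$. The crucial observation is that this flip does not raise the supremum that defines $E_\vp$ on $A$: for $X \in A$, $L_b(0,1)\widehat{\vp}(X,b) = L_b(0,1) \leq E_\vp(X)$, so $E_{\widehat{\vp}}(X) = E_\vp(X)$ on $A$, and trivially off $A$. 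Thus $\risk_P(\widehat{\vp}) = \risk_P(\vp) \leq 1$, so $\widehat{\vp}$ is type-I risk safe. Since $\widehat{\vp} \geq \vp$ everywhere with strict inequality on $A$ at the single coordinate $b$, and $Q(A) > 0$ by absolute continuity of $Q$ with respect to $P$, the binary version of Lemma~\ref{lem:phi>delta} applies and yields that $\vp$ is $\Gamma$-inadmissible for any $\Gamma \supset \consta$, contradicting our hypothesis.

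The only real checkpoint — more technical bookkeeping than a genuine obstacle — is verifying that the modified test remains type-I risk safe; that reduces entirely to the inequality $L_b(0,1) \leq E_\vp(X)$ on $A$, which is precisely the condition picking out the contradictory set. Once this is in hand, Lemma~\ref{lem:phi>delta} delivers the contradiction immediately, and the two pointwise bounds combine to give the claimed $P$-almost sure identity.
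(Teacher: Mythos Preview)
Your proof is correct and follows essentially the same approach as the paper: establish $\vp(X,b) \leq \ind{L_b(0,1) \leq E_\vp(X)}$ pointwise from the definition of $E_\vp$, then for the reverse direction flip $\vp$ to $1$ on the offending set, observe that $E_{\widehat\vp} = E_\vp$ so type-I risk safety is preserved, and invoke Lemma~\ref{lem:phi>delta}. If anything, your use of the non-strict inequality $L_b(0,1) \leq E_\vp(X)$ in the contradiction setup is slightly cleaner than the paper's strict inequality, since it matches the indicator in the statement exactly.
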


Similarly to the randomized case, let us say that a test family with the representation in~\eqref{eq:canonical-binary} is \emph{canonical}, or has canonical representation. And again as we did in the randomized case, let us associate test families to e-variables. Given an e-variable $E$ for $P$, we define its associated binary test family as
\begin{equation}
\label{eq:delta-from-E-binary}
    \vp_E(X,b) \equiv \ind{ L_b(0,1) \leq E(X)}.
\end{equation}
As usual, we use $\vp_E$ or $\psi_E$ to denote the binary test family associated with $E$, as opposed to the randomized test defined by~\eqref{eq:delta-from-e}. We will always explicitly mention whether we are considering randomized or binary tests. 

Both~\eqref{eq:canonical-binary} and~\eqref{eq:delta-from-E-binary} are satisfying in that they are precisely what one would obtain from naively applying the results in the previous section and rounding all non-integer values taken by a test family to zero. Similarly to the randomized case, it is easy to check that $\vp_{E_\vp}(X,b) = \vp(X,b)$ $P$-almost surely. Moreover, given an e-variable $E$, we have 
\begin{equation}
    E_{\vp_E}(X) = \sup_b L_b(0,1)\ind{E(X) \geq L_b(0,1)},
\end{equation}
which is equal to $E(X)$ if $E(X) = L_b(0,1)$ for some $b$, or $E(X)=0$.
In other words, if $E(X)$ is compatible with $\calL$ then $E_{\vp_E} = E$ for any e-variable $E$.

If $E$ is not compatible with $\calL$, then the test family $\delta_E$ is ``wasting'' some of its risk budget for no reduction in type-II loss. Indeed, suppose that for some $X$  $L_{b_1}(0,1) < E(X) < L_{b_2}(0,1)$ where there is no other loss between $L_{b_1}(0,1)$ and $L_{b_2}(0,1)$. Then $E(X)$ defines the same test as taking $\widetilde{E}(X) = L_{b_1}(0,1)$. Moreover, $\widetilde{E}(X)$ can take advantage of the reduction in type-I risk in order to take larger values elsewhere and improve its type-II risk. The following result formalizes this argument. Its proof is provided in Appendix~\ref{sec:proof-compatible}.

\begin{lemma}
\label{lem:compatible}
    Let $E$ be an e-variable for $P$ and fix any $\Gamma\supset\consta$. If $\vp_E$ is $\Gamma$-admissible then $E$ is compatible with $\calL$. Consequently, if $\vp_E$ is $\Gamma$-admissible then $E = E_{\vp_E}$. 
\end{lemma}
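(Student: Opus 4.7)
My plan is a contradiction argument using Lemma~\ref{lem:phi>delta}. Assume $P(A) > 0$ where $A \equiv \{X \in \calX : E(X) \notin \calL \cup \{0\}\}$. I will construct an e-variable $E'$ for $P$ such that $\vp_{E'}$ pointwise dominates $\vp_E$ and strictly improves on some constant adversary, contradicting $\Gamma$-admissibility (only $\consta \subset \Gamma$ is needed). The intuition is exactly the one sketched in the paragraph preceding the lemma: on $A$, the value $E(X)$ sits strictly between adjacent thresholds in $\calL$, so the excess above the nearest lower threshold is ``wasted'' budget that can be redeployed elsewhere.

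The construction proceeds in two steps. First, round $E$ down to $\widetilde{E}(X) \equiv \sup\{\ell \in \calL \cup \{0\} : \ell \leq E(X)\}$. Because every $L_b(0,1) \in \calL$, the equivalence $L_b(0,1) \leq E(X) \iff L_b(0,1) \leq \widetilde{E}(X)$ gives $\vp_{\widetilde{E}} = \vp_E$ pointwise, while in the typical case $\widetilde{E}(X) < E(X)$ on $A$ yields $\E_P[\widetilde{E}] < 1$. Second, pick $b^*$ with $L_{b^*}(0,1) > 1$ (which exists by the standing assumption). Since $\widetilde{E}$ is an e-value, the set $F \equiv \{X : \widetilde{E}(X) < L_{b^*}(0,1)\}$ has positive $P$-measure (else $\E_P[\widetilde{E}] \geq L_{b^*}(0,1) > 1$). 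Choose $F' \subset F$ with $P(F') > 0$ small enough that $\E_P[\widetilde{E}] + L_{b^*}(0,1)\, P(F') \leq 1$, and set
\begin{equation*}
E'(X) \equiv L_{b^*}(0,1)\, \ind{X \in F'} + \widetilde{E}(X)\, \ind{X \notin F'}.
\end{equation*}
Then $E'$ is an e-variable satisfying $E' \geq \widetilde{E}$ pointwise, and on $F'$ we have $\vp_{E'}(X, b^*) = 1 > 0 = \vp_{\widetilde{E}}(X, b^*) = \vp_E(X, b^*)$. Since $Q \sim P$ (Observation~\ref{eq:Q<<P} and the discussion following it), $Q(F') > 0$, so Lemma~\ref{lem:phi>delta} applied with $\phi = \vp_{E'}$ and $\delta = \vp_E$ yields the contradiction. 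The ``consequently'' assertion ($E = E_{\vp_E}$) then follows by a direct calculation: once $E \in \calL \cup \{0\}$ $P$-almost surely, $E_{\vp_E}(X) = \sup_b L_b(0,1)\, \ind{L_b(0,1) \leq E(X)} = E(X)$.

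The main obstacle I anticipate is the boundary subcase in which $E(X) \notin \calL$ yet $E(X) = \sup\{\ell \in \calL : \ell \leq E(X)\}$, i.e.\ $E(X)$ is a limit point of $\calL$ from below that is not itself in $\calL$. Here truncation gives $\widetilde{E}(X) = E(X)$ and saves no budget, so the two-step construction above breaks. To handle this, I would instead round $E(X)$ down to some $\ell \in \calL$ strictly below $E(X)$, accepting a loss of rejection on the losses in $(\ell, E(X))$; by choosing $\ell$ sufficiently close to $E(X)$ (after partitioning the boundary set into finer pieces on which $E$ is controlled), the rejection deficit under $Q$ on any such $b$ can be made arbitrarily small, while the saved budget, when redirected via an $F'$ as above, still produces a strict net gain. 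Ensuring weak preference simultaneously on \emph{every} constant adversary while extracting a strict improvement on at least one is the delicate step, and likely requires a careful limiting argument uniform in $b$.
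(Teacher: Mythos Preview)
Your main construction is correct and follows the same ``round down, redirect the savings'' idea as the paper, with only a packaging difference: you truncate $E$ globally to $\widetilde{E}$ and then raise it on a fresh set $F'$, whereas the paper keeps the modification local to the incompatibility set $Y$, sending part of $Y$ down to the nearest lower loss $L_{b_1}(0,1)$ and part of $Y$ up to the adjacent higher loss $L_{b_2}(0,1)$. Your version is arguably cleaner, since it avoids having to identify adjacent losses and handles the sub-cases $0<E(X)<\inf\calL$ and $E(X)>\sup\calL$ uniformly.

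The boundary case you flag---$E(X)\notin\calL$ yet $E(X)=\sup\{\ell\in\calL:\ell\le E(X)\}$---is a genuine obstruction, and your sketched fix cannot work. Rounding $E(X)$ down to some $\ell<E(X)$ strictly loses rejection on every $b$ with $L_b(0,1)\in(\ell,E(X))$, and weak preference (Definition~\ref{def:Pi-admissible}) demands $\E_Q[\psi(X,b)]\ge\E_Q[\vp_E(X,b)]$ for \emph{every} constant $b$, not merely a favorable net balance; redirecting the saved budget to a single $b^*$ cannot simultaneously offset the deficit on the continuum of $b$'s with $L_b(0,1)$ just below $E(X)$. In fact the obstruction looks intrinsic rather than technical: with $\calL=[1,2)$, $P$ uniform on $[0,1]$, $Q$ with density $2x$, and $E=2\,\ind{X\ge 1/2}$, a Neyman--Pearson argument forces any type-I risk safe canonical $\psi$ that is weakly preferable to $\vp_E$ on all of $\calL$ to satisfy $E_\psi=E$ almost surely, so no strict improvement exists. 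It is worth noting that the paper's own case split---an isolated gap in $\calL$ versus $E(X)>\sup\calL$---also does not address this limit-point configuration.
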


\section{Classifying the admissible rules}
\label{sec:classification}

The following two sections study $\alla$-admissibility and $\consta$-admissibility. Table~\ref{tab:admissibility} provides a summary of the results. 

\begin{table}[t]
\centering
\textbf{Necessary and sufficient conditions for admissibility}\\ 
\vspace{0.5cm}
\ifarxiv \small \fi 
\begin{tabular}{l|c|c|c}
 & $\Gamma$ & $|B|>1$ & $|B|=1$  \\
\hline
\multirow{5}{*}{\makecell{Randomized test families \\ (\emph{Continuous or discrete} \\ \emph{distributions})}}     & $\alla$ & \makecell{If~\eqref{eq:C1} holds: \\ $\delta$ is $\alla$-admissible \\ $\Leftrightarrow$ \\ $\delta=\delta_E$ for $E$ sharp \\ and $E\leq \sup_b L_b(0,1)$}  & \multirow{14}{*}{\makecell{$\delta$ is $\alla$-admissible \\ $\Leftrightarrow$ \\ $\delta$ is $\consta$-admissible \\ $\Leftrightarrow$ \\ $\delta$ is the Neyman-Pearson \\ (likelihood-ratio) test}}  \\
\cline{2-3} 
& $\consta$ & \makecell{$\delta$ is $\consta$-admissible \\ ${\color{red}\Rightarrow}$ \\ 
$\delta=\delta_E$ for $E$ sharp,
increasing in $\lr$,\\ and compatible with $\calL$ } & \\
\cline{1-3}
\multirow{5}{*}{\makecell{Binary test families \\ (\emph{Continuous distributions)}}} & $\alla$ & \makecell{If \eqref{eq:C2} holds: \\ $\vp$ is $\alla$-admissible \\ $\Leftrightarrow$ \\ $\vp=\vp_E$ for $E$ sharp \\ and compatible with $\calL$} & \\ 
\cline{2-3} 
& $\consta$ & \makecell{$\vp$ is $\consta$-admissible \\ $\Leftrightarrow$ \\ $\vp=\vp_E$ for $E$ sharp,   increasing in $\lr$,\\ and compatible with $\calL$} & \\
\hline
\end{tabular}
\caption{An overview of our admissibility results and how they relate to the traditional Neyman-Pearson paradigm ($|B|=1$). 
$E$ always refers to an e-variable above. Note the one-way implication (highlighted in red) for $\consta$-admissibility for randomized test families, which differs from the biconditionals (if and only if) in the other cases. Recall that $E$ is sharp if $\E_P[E]=1$ and $E$ is compatible with $\calL$ if $E \in \calL\cup\{0\}$ $P$-almost surely (Definition~\ref{def:compatible}).}
\label{tab:admissibility}
\end{table}

\subsection{$\alla$-admissibility}
\label{sec:M-admissibility}

The first natural set of adversaries $\Gamma$ to consider in the definition of $\Gamma$-admissibility is all mappings from the data to the losses; i.e., $\Gamma = \alla$. This section classifies the set of $\alla$-admissible test families and binary test families for a point null $P$ versus a point alternative $Q$. For randomized test families we will let $P$ and $Q$ to be either discrete or continuous. As usual, for binary test families we assume that $P$ and $Q$ are continuous. 

When discussing randomized test families we will impose the following conditions on the losses, which restricts the behavior the ratio of type-II loss to type-I loss in the limit. In particular, we will assume that 
\begin{equation}
\label{eq:C1}
    \liminf_{b\to \sup(\calB)} \frac{L_b(1,0)}{L_b(0,1)} = 0. \tag{C1}
\end{equation}
Condition~\eqref{eq:C1} implies that an adversary can make type-II loss arbitrarily unimportant compared to type-I loss. 
In other words, as the cost of false positives increases ($L_b(0,1)$ grows) we care more about false positives than false negatives.

Intuitively, given two test families $\phi$ and $\delta$, if condition~\eqref{eq:C1} holds then an adversary $B$ can make $\phi$ arbitrarily small on regions where $\phi > \delta$, thus ensuring a lower type-II risk for $\delta$. Without such a condition, the set of admissible rules appear to depend in unsatisfactory ways on the precise null and alternative being tested. Condition~\eqref{eq:C1} can be replaced by the assumption that $\liminf_{b\to\inf(\calB)} L_b(1,0)/L_b(0,1) = 0$, but this requires that $L_b(1,0) \approx 0$ for small $b$, which is unsatisfying for practical applications. If the type-II loss is zero, then one should always accept the null. 

With that, we state the main result for $\alla$-admissibility of randomized test families. Recall that $\delta$ is canonical if it is written as in~\eqref{eq:canonical}.

\begin{theorem}[$\alla$-admissibility, randomized tests]
\label{thm:M-admissibility}
Let $\delta$ be a randomized test family and suppose condition~\eqref{eq:C1} holds. Then  $\delta$ is $\alla$-admissible if and only if $E_\delta$ is sharp and $\delta$ is canonical.  Furthermore, given an e-variable $E$ for $P$, $\delta_E$ is $\alla$-admissible if and only if $E$ is sharp and $E \leq \sup_bL_b(0,1)$ $P$-almost surely. 
\end{theorem}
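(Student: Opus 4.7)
For the forward direction, canonicity of any admissible $\delta$ is immediate from Lemma~\ref{lem:representation}, so it suffices to show $E_\delta$ must be sharp. Suppose instead $\E_P[E_\delta] < 1$. Writing $L^* := \sup_b L_b(0,1)$, we have $E_\delta \le L^*$ by construction, and the standing assumption that $L_b(0,1) > 1$ for some $b$ combined with type-I risk safety forces $P(E_\delta < L^*) > 0$ (otherwise $E_\delta = L^* > 1$ $P$-a.s.\ would give $\E_P[E_\delta] > 1$). Choose $c>0$ small enough that $\tilde E := (E_\delta + c)\wedge L^*$ satisfies $\E_P[\tilde E] \le 1$ and $\tilde E > E_\delta$ on a set of positive measure. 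Then $\tilde E$ is an e-variable, $\delta_{\tilde E}$ is type-I risk safe with $E_{\delta_{\tilde E}} = \tilde E$, and Lemma~\ref{lem:Ephi>Edelta} provides the contradiction.

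\textbf{Sufficiency.} Now assume $\delta$ is canonical with $E_\delta$ sharp, and suppose for contradiction that some $\phi$ is strictly preferable to $\delta$. Without loss of generality one may assume $\phi$ is itself canonical: its canonicalization $\phi^{\mathrm{can}}(X,b) := \min\{1, E_\phi(X)/L_b(0,1)\}$ pointwise dominates $\phi$ by definition of $E_\phi$, remains type-I risk safe since $E_{\phi^{\mathrm{can}}} \le E_\phi$, and is therefore also strictly preferable to $\delta$. The plan is then to establish $E_\phi \ge E_\delta$ $P$-a.s.; together with sharpness of $E_\delta$ and $\E_P[E_\phi]\le 1$ this forces $E_\phi = E_\delta$ $P$-a.s., and canonicity of both tests then gives $\phi = \delta$ $P$-a.s., contradicting strict preferability.

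\textbf{Key step.} To establish the pointwise comparison I exploit (C1). Fix $b'\in\calB$ and a measurable $S\subseteq\calX$, and apply weak preferability to the non-constant adversary $B_n(X) = b_n\ind{X\in S} + b'\ind{X\notin S}$, where $b_n \to \sup\calB$ is any sequence with $c(b_n) := L_{b_n}(1,0)/L_{b_n}(0,1) \to 0$, supplied by (C1). Canonicity bounds the $S$-contribution in absolute value by $\E_Q[c(b_n)(E_\phi + E_\delta)]$, which tends to $0$ by dominated convergence, so passing to the limit yields
\begin{equation*}
\E_Q\left[L_{b'}(1,0)\left(\phi(X,b') - \delta(X,b')\right)\ind{X\notin S}\right] \ge 0
\end{equation*}
for every measurable $S$, hence $\phi(\cdot,b') \ge \delta(\cdot,b')$ $Q$-a.s.\ (and thus $P$-a.s.) for each $b'$. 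Applying this at any $b'$ with $L_{b'}(0,1) > E_\delta(X)$, or via approximation by $L_{b'_n}(0,1) \downarrow E_\delta(X)$ when $\calL$ is sparse near $E_\delta(X)$, promotes the relation to $E_\phi \ge E_\delta$ $P$-a.s. The second statement of the theorem then follows at once: $\delta_E$ is canonical by construction, and $E_{\delta_E} = E \wedge L^*$ is sharp exactly when $E$ is sharp and $E \le L^*$ $P$-a.s.

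\textbf{Main obstacle.} I expect the delicate point to be the dominated-convergence argument in the key step, since integrability of $E_\phi + E_\delta$ under $Q$ is not automatic when $L^* = \infty$. My plan is to truncate losses above at a large threshold $M$, verify that weak preferability is preserved under this truncation, run the argument in the resulting bounded regime where integrability is free, and then let $M \to \infty$. Condition (C1) is what makes the ``$b_n$-tail'' of the constructed adversary negligible in the limit; without it the $S$-contribution would not vanish and the argument would break.
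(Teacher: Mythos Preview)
Your route differs from the paper's. You assume a strictly preferable $\phi$ exists and, by sending part of the adversary to $b_n\to\sup\calB$, squeeze out $\phi(\cdot,b')\ge\delta(\cdot,b')$ for every fixed $b'$, then promote to $E_\phi\ge E_\delta$ and finish via sharpness. The paper instead constructs a single adversary on the four-region partition $R_1,\dots,R_4$ of $\calX$ determined by the sign of $E_\delta-E_\phi$: on $R_4=\{E_\delta>E_\phi\}$ it picks $B(X)$ with $L_{B(X)}(0,1)>E_\phi(X)$ so that $\delta$ strictly beats $\phi$ there (giving $\Delta_4>0$), and on $R_2,R_3$ it uses (C1) to bound those contributions by $\Delta_4/4$. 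Both arguments exploit (C1) to suppress an unwanted piece; yours is conceptually tidy but makes the $Q$-integrability question unavoidable.

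Your proposed fix is where the gap lies. Truncating to $\calB_M=\{b:L_b(0,1)\le M\}$ does preserve weak preferability (every $\calB_M$-valued adversary is already in $\alla$), but it destroys (C1): that condition concerns the tail $b\to\sup\calB$, which is precisely what you have excised. Take $L_b(0,1)=b$, $L_b(1,0)=\sqrt{b}$ on $\calB=[1,\infty)$; then (C1) holds globally, yet on $\calB_M=[1,M]$ the ratio at the new supremum is $M^{-1/2}>0$, so no sequence $b_n\in\calB_M$ makes the $S$-contribution vanish and your key step collapses. A repair that stays within your framework is to keep the full $\calB$ and the original $b_n$, but restrict to $S\subseteq A_M:=\{E_\phi+E_\delta\le M\}$; the $S$-term is then dominated by $c(b_n)M\to 0$, and one obtains $\E_Q\big[L_{b'}(1,0)(\phi-\delta)(\cdot,b')\,\ind{A_M^c\cup T}\big]\ge 0$ for every $T\subseteq A_M$. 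Choosing $T=A_M\cap\{\phi(\cdot,b')<\delta(\cdot,b')\}$ and letting $M\to\infty$ (using $|\phi-\delta|\le 1$ and $Q(A_M^c)\to 0$) yields $\phi(\cdot,b')\ge\delta(\cdot,b')$ $Q$-a.s., as you need. (The paper's own proof has the analogous issue, tacitly assuming $\int_{R_2\cup R_3}E_\phi\,\d Q<\infty$; it is repaired by a similar stratification of the adversary.)
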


The proof is in Appendix~\ref{sec:proof-thm-M-admissibility}. The idea for the first part of the theorem is straightforward. Given two type-I risk safe test families $\delta$ and $\phi$, we split $\calX$ into regions depending on whether $E_\delta$ is greater than $E_\phi$ or vice versa. When $E_\delta(X)> E_\phi(X)$, we can pick the adversary $B(X)$ such that $\phi(X,B) <  \delta(X,B)$, ensuring that $\delta$ has smaller type-II risk in such regions. When $E_\phi(X)>E_\delta(X)$, we choose $B(X) \to \sup(\calB)$ which drives the contribution of $\phi(X,B)$ on such regions to 0.

\begin{remark}
\label{rem:M-admissibility-NP-recovery}
    The invocation of Condition~\eqref{eq:C1} in Theorem~\ref{thm:M-admissibility} prohibits us from translating the results into the standard Neyman-Pearson setting. That is, Observation~\ref{obs:generalization-of-np} does not apply. Indeed, for a singleton $\calB = \{b\}$, \eqref{eq:C1} implies that $L_b(1,0) = 0$, so type-II risk loses its meaning and does not translate into the power of a test in the traditional paradigm.  
\end{remark}

An immediate consequence of Theorem~\ref{thm:M-admissibility} is that any $\alla$-admissible test family can be written as the test family corresponding to some
e-variable, highlighting their central role in post-hoc testing:
\begin{corollary}[$\alla$-admissibility, randomized tests]
\label{cor:M-admissibility-cor}
Suppose condition~\eqref{eq:C1} holds. 
    Then a randomized test family $\delta$ is $\alla$-admissible if and only if $\delta = \delta_E$ for some sharp e-variable $E$ such that $E \leq \sup_b L_b(0,1)$ $P$-almost surely. 
\end{corollary}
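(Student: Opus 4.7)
The plan is to deduce the corollary directly from Theorem~\ref{thm:M-admissibility}, since it essentially just repackages the theorem by absorbing the ``canonical'' condition into the statement ``$\delta$ arises from an e-variable.'' Both directions should fall out immediately from the correspondences already established in~\eqref{eq:delta-from-e} and~\eqref{eq:EdeltaE_vs_E}.

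For the forward direction, suppose $\delta$ is $\alla$-admissible. The first half of Theorem~\ref{thm:M-admissibility} gives that $E_\delta$ is sharp and that $\delta$ is canonical. I would then set $E := E_\delta$ and verify that $\delta = \delta_E$ and that $E$ satisfies the boundedness requirement. The canonical representation~\eqref{eq:canonical} reads $\delta(X,b) = \min\{1, E_\delta(X)/L_b(0,1)\}$ $P$-almost surely, which is exactly the definition of $\delta_{E_\delta}(X,b)$ from~\eqref{eq:delta-from-e}; so $\delta = \delta_{E_\delta}$. For the bound $E \leq \sup_b L_b(0,1)$, one notes straight from the definition $E_\delta(X) = \sup_b L_b(0,1)\delta(X,b)$ that $E_\delta(X) \leq \sup_b L_b(0,1)$ since $\delta(X,b) \in [0,1]$ for all $b$.

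For the backward direction, suppose $\delta = \delta_E$ for some sharp e-variable $E$ with $E \leq \sup_b L_b(0,1)$ $P$-almost surely. This is exactly the hypothesis of the second half of Theorem~\ref{thm:M-admissibility}, which yields that $\delta_E$ is $\alla$-admissible. No further work is required.

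There is essentially no obstacle here: the substantive content is in Theorem~\ref{thm:M-admissibility}, and the corollary is a clean reformulation that highlights the bijective role of e-variables. The only subtle point to mention explicitly is that, in the forward direction, the ``canonical'' form of an admissible $\delta$ is not merely abstractly related to some e-variable, but literally equals $\delta_{E_\delta}$ in the notation of~\eqref{eq:delta-from-e}. Once this identification is made the proof is just a two-line deduction in each direction.
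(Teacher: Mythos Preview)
Your proposal is correct and matches the paper's approach exactly: the paper states the corollary as ``an immediate consequence of Theorem~\ref{thm:M-admissibility}'' without further elaboration, and you have simply unpacked that immediacy by identifying $E = E_\delta$ in the forward direction (using the canonical form and the trivial bound $E_\delta(X) = \sup_b L_b(0,1)\delta(X,b) \le \sup_b L_b(0,1)$) and invoking the second half of the theorem verbatim in the backward direction.
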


Intriguingly, Theorem~\ref{thm:M-admissibility} and Corollary~\ref{cor:M-admissibility-cor} recover the result of \citet{grunwald2024beyond} up to some assumptions, even though he studied a weaker form of admissibility. In particular, Theorem 1 of \citet{grunwald2024beyond} essentially shows that any G-admissible rule is defined by a sharp e-variable. 
Roughly speaking, he considers a test family to be G-admissible if it is  not dominated with probability 1 by another test. (We use G-admissibility to refer to Gr\"{u}nwald.) 

Upon reflection, the convergence of our two results is unsurprising.  Given two sharp but distinct e-variables, each must be greater than the other in some region of $\calX$. Neither will dominate the other with probability 1, hence neither will the corresponding tests. Thus both are G-admissible. In our setting meanwhile, Condition~\eqref{eq:C1} ensures that we can choose the adversary to take advantage of the difference between the two e-values.

Let us now provide the analogue of Theorem~\ref{thm:M-admissibility} for binary test families. In this case we replace condition~\eqref{eq:C1} with an assumption on the density of the losses in the reals. In particular: 
\begin{equation}
\label{eq:C2}
\text{$\calL$ is contained in and is dense in $\Re_{\geq M}$ for some $M>0$.} \tag{C2}
\end{equation}
\begin{theorem}[$\alla$-admissibility, binary tests]
\label{thm:M-admissibility-binary}
    Let $P$ and $Q$ be continuous and suppose that~\eqref{eq:C2} holds.  
    A binary test family  $\vp$ is $\alla$-admissible if and only if $E_{\vp}$ is sharp and $\vp$ is canonical. Moreover, given an e-variable $E$ for $P$, $\vp_E$ is $\alla$-admissible if and only if $E$ is sharp and compatible with $\calL$. 
\end{theorem}

The proof is in Appendix~\ref{sec:proof-M-admissibility-binary}. It follows a similar logic to the proof of Theorem~\ref{thm:M-admissibility}. Like Condition~\eqref{eq:C1}, the density assumption in Theorem~\ref{thm:M-admissibility-binary} prohibits us from applying the result to the standard Neyman-Pearson paradigm. That is, the conclusion of Remark~\ref{rem:M-admissibility-NP-recovery} applies in this case as well. 

To end this section, we state the 
analogue of Corollary~\ref{cor:M-admissibility-cor} for binary test families. 

\begin{corollary}[$\alla$-admissibility, binary tests]
    Let $P$ and $Q$ be continuous and suppose that $\calL$ is contained in and is dense in $\Re_{\geq 1}$.  A binary test family $\vp$ is $\alla$-admissible if and only if $\vp = \vp_E$ for some sharp e-variable $E$ for $P$ which is compatible with $\calL$. 
\end{corollary}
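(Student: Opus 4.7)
The plan is to derive the corollary as essentially a bookkeeping consequence of Theorem~\ref{thm:M-admissibility-binary}, together with the identities that link a binary test family to its associated e-variable. No new measure-theoretic work should be needed; the content is in matching up the two equivalent ways of describing a test.

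For the forward direction, I would start with a binary test family $\vp$ that is $\alla$-admissible. Theorem~\ref{thm:M-admissibility-binary} then gives two pieces of information: $E_{\vp}$ is sharp, and $\vp$ is canonical, i.e., $\vp(X,b) = \ind{L_b(0,1) \leq E_{\vp}(X)}$ $P$-almost surely. But the right-hand side is exactly $\vp_{E_{\vp}}(X,b)$ by the definition~\eqref{eq:delta-from-E-binary}. Hence $\vp = \vp_{E_{\vp}}$ $P$-almost surely, and the candidate e-variable is $E := E_{\vp}$. It remains to verify that this $E$ is compatible with $\calL$. Here I would invoke Lemma~\ref{lem:compatible}: since $\vp_E = \vp_{E_{\vp}} = \vp$ is $\alla$-admissible, Lemma~\ref{lem:compatible} forces $E$ to be compatible with $\calL$ (we are using $\alla \supseteq \consta$). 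Sharpness of $E$ is immediate because $E = E_{\vp}$ is sharp by Theorem~\ref{thm:M-admissibility-binary}. This completes the forward direction.

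For the backward direction, I would simply apply the second half of Theorem~\ref{thm:M-admissibility-binary}: given a sharp e-variable $E$ compatible with $\calL$, the test family $\vp_E$ is $\alla$-admissible. Thus any $\vp$ of the form $\vp = \vp_E$ is admissible. Combining both directions yields the stated biconditional.

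The only point worth being careful about is that the two characterizations (``$\vp$ is canonical and $E_{\vp}$ is sharp and compatible'' versus ``$\vp = \vp_E$ for some sharp, compatible $E$'') are genuinely equivalent as conditions on $\vp$. The forward implication uses the canonical form to identify $E_{\vp}$ as the required e-variable, while the backward implication uses the fact, noted just after Lemma~\ref{lem:representation-binary}, that when $E$ is compatible with $\calL$ one has $E_{\vp_E} = E$, so the candidate $E$ produced in the ``if'' direction is internally consistent. There is no real obstacle here; the main thing to get right is to cite Lemma~\ref{lem:compatible} (rather than re-proving compatibility) in order to promote the conclusion ``$E_{\vp}$ sharp, $\vp$ canonical'' of Theorem~\ref{thm:M-admissibility-binary} into the cleaner statement involving $\vp_E$.
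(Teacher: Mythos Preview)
Your proposal is correct and follows exactly the intended route: the corollary is an immediate repackaging of Theorem~\ref{thm:M-admissibility-binary}, and the paper does not even give a separate proof. Your only minor redundancy is citing Lemma~\ref{lem:compatible} for compatibility in the forward direction; you could equally well read compatibility off the ``only if'' half of the second statement in Theorem~\ref{thm:M-admissibility-binary} once you have identified $\vp = \vp_{E_{\vp}}$, but either route works.
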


\subsection{Improving tests via Rao–Blackwellization}
\label{sec:rao-blackwellization}
What kinds of e-variables are available to us when testing point nulls and alternatives? 
For the Neyman-Pearson style tests in Section~\ref{sec:np-tests}, the corresponding e-variable is easily seen to be $E^\np(X) = L_{b^*}(0,1)\ind{\lr(X) \geq \kappa(b^*)}$ for some fixed $b^*$. Another e-variable is of course the likelihood ratio itself which, in the binary case, defines the satisfyingly intuitive test family $\vp_\lr(X,b) = \ind{\lr(X) \geq L_b(0,1)}$.   

These examples might make us wonder whether all ``good'' e-variables for post-hoc hypothesis testing for point hypotheses are functions of the likelihood ratio. In general, of course, there are e-variables which are not functions  of $\lr$. Indeed,  $E=\d R/\d\P$ for any distribution $R$ is a (sharp) e-variable. However,  unless $R = Q$, such an e-variable is not log-optimal~\citep{shafer2021testing}. We might therefore expect that it can be improved as a post-hoc test. 

Here we show that Rao-Blackwellization~\citep{radhakrishna1945information,blackwell1947conditional} can weakly improve the type-II risk of any test family based on an e-variable, with respect to a particular set of adversaries. Recall that the Rao-Blackwell theorem says that the expected loss of an estimator can be weakly improved (with respect to a convex loss) by conditioning on a sufficient statistic. See~\citet[Theorem 9.42]{wasserman2004all} for a modern statement of the result. 

Let $E$ be an arbitrary e-variable for $P$ and let $T$ be any sufficient statistic for testing $P$ vs $Q$. More formally, we assume that testing $P$ vs $Q$ can be written as $H_0: \theta=\theta_0$ and $H_1:\theta = \theta_1$ and that $T$ is sufficient for $\theta$. Define 
\begin{equation}
\label{eq:rao-e-value}
    S_T(X) = \E_\P [ E(X) | \sigma(T(X))], 
\end{equation}
where $\sigma(T(X))$ is the $\sigma$-field defined by $T(X)$. 
Note that $S$ is an e-variable by the law of total expectation: $\E_P[S] = \E_P \E_P[E|\sigma(T)] = \E_P[E]\leq 1$. Further note that $S_T$ is \emph{observable}, meaning that it does not depend on the unknown parameter. This is by definition of sufficiency. 

The following result shows that $S_T$ defines a test family that is weakly-preferable to that defined by $E$. However, we must restrict ourselves to adversaries that are $\sigma(T)$-measurable. 

\begin{proposition}
    \label{prop:rao-blackwell}
    Let $T$ be a sufficient statistic for $P$ vs $Q$. 
    Given any e-variable $E$ for $P$, the test family $\delta_{S_T}$ for $S_T$ in~\eqref{eq:rao-e-value} is weakly-preferable to $\delta_E$ with respect to any $\Gamma\subset\Gamma_{\sigma(T)}$ where $\Gamma_{\sigma(T)}$ is the set of all adversaries which are $\sigma(T)$-measurable. The same statement holds for the binary test families $\vp_{S_T}$ and $\vp_E$.  
\end{proposition}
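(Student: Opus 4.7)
The plan is to combine the sufficiency of $T$ with Jensen's inequality applied to the concave map $z\mapsto \min\{1,z\}$. First I would verify that $S_T$ is an e-variable for $P$: by the tower property, $\E_P[S_T] = \E_P\E_P[E\mid\sigma(T)] = \E_P[E] \leq 1$. Using the canonical representations in equations~\eqref{eq:delta-from-e} and~\eqref{eq:delta-from-E-binary} together with Lemma~\ref{lem:E<supb}, this guarantees that both $\delta_{S_T}$ and $\vp_{S_T}$ are type-I risk safe.

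For the randomized claim, I would fix an adversary $B\in\Gamma\subset\Gamma_{\sigma(T)}$ and aim to show
\begin{equation*}
\E_Q[L_B(1,0)\,\delta_E(X,B)] \;\leq\; \E_Q[L_B(1,0)\,\delta_{S_T}(X,B)],
\end{equation*}
which is equivalent to weak preferability after rewriting $L_b(1,d)=L_b(1,0)(1-d)$. Since $B$ is $\sigma(T)$-measurable, so are $L_B(1,0)$ and $L_B(0,1)$, and the tower property under $Q$ turns the left-hand side into $\E_Q[L_B(1,0)\,\E_Q[\delta_E(X,B)\mid\sigma(T)]]$. The crucial step is sufficiency: the conditional distribution of $X$ given $T$ does not depend on the parameter, so $\E_Q[\delta_E(X,B)\mid\sigma(T)] = \E_P[\delta_E(X,B)\mid\sigma(T)]$. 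Jensen's inequality, applied to the concave function $e\mapsto \min\{1,e/L_B(0,1)\}$ (valid because $L_B(0,1)$ is $\sigma(T)$-measurable and can be pulled out of the conditional expectation), then yields
\begin{equation*}
\E_P\!\left[\min\!\left\{1,\frac{E(X)}{L_B(0,1)}\right\}\,\Big|\,\sigma(T)\right] \;\leq\; \min\!\left\{1,\frac{\E_P[E\mid\sigma(T)]}{L_B(0,1)}\right\} \;=\; \delta_{S_T}(X,B),
\end{equation*}
which chains to give the randomized inequality.

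For the binary claim, the overall strategy is the same---condition on $\sigma(T)$ and use sufficiency to swap $P$ and $Q$ conditional expectations---but the Jensen step breaks since $x\mapsto \ind{x\geq c}$ is not concave. I expect this to be the main obstacle. A natural attempt is to bridge through the randomized comparison: show $\E_P[\vp_E(X,B)\mid\sigma(T)] \leq \delta_{S_T}(X,B)$ via a Markov-type inequality on the conditional distribution of $E$ given $T$, and then appeal to the canonical representation of binary admissible tests (Lemma~\ref{lem:representation-binary}) together with the compatibility result (Lemma~\ref{lem:compatible}) to transfer the randomized domination into a statement about $\vp_{S_T}$. The hard part is that averaging binary decisions produces a randomized object, so additional structure---compatibility of $S_T$ with $\calL$, or restricting attention to e-values $E$ already compatible with $\calL$---likely has to be invoked to pass cleanly from the randomized inequality to the binary one.
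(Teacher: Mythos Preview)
Your randomized argument is correct and is essentially the paper's proof. The only cosmetic difference is how sufficiency enters: the paper packages the concave map as $f(e,B)=\min\{L_B(1,0),\,L_B(1,0)\,e/L_B(0,1)\}$ and, instead of invoking $\E_Q[\,\cdot\mid\sigma(T)]=\E_P[\,\cdot\mid\sigma(T)]$ directly, uses that $\lr$ is $\sigma(T)$-measurable (minimal sufficiency), writes $\E_Q[\cdot]=\E_P[\lr\,\cdot\,]$, and pulls $\lr$ through the $P$-conditional expectation. Both routes are standard expressions of sufficiency. (A small aside: Lemma~\ref{lem:E<supb} concerns admissibility, not risk safety; the latter follows simply from $E_{\delta_{S_T}}\le S_T$ and $\E_P[S_T]\le 1$.)

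Your hesitation on the binary case is not a failure to locate the right trick; it is the detection of a genuine problem. The paper dispatches the binary case in one line by asserting that $e\mapsto L_B(1,0)\,\ind{e\ge L_B(0,1)}$ is concave in $e$ and reusing the Jensen step---but a step function is not concave, so that sentence is simply wrong. In fact the binary claim is false as stated. Take $X=(X_1,X_2)$ with $X_1,X_2$ independent under both $P$ and $Q$, with $X_2\sim\mathrm{Unif}[0,1]$ under both and $X_1$ having distinct continuous marginals; then $T=X_1$ is sufficient. With $E(X)=1.8\,X_2$ we have $\E_P[E]=0.9\le 1$ and $S_T=\E_P[E\mid X_1]\equiv 0.9$. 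For the constant ($\sigma(T)$-measurable) adversary $B\equiv b$ with $L_b(0,1)=1$ one gets $Q(S_T\ge 1)=0$ while $Q(E\ge 1)=4/9$, so $\vp_{S_T}$ has strictly larger type-II risk than $\vp_E$ on this loss. Hence no amount of compatibility bookkeeping or Markov-type patching can push the binary statement through; the obstacle you identified is not cosmetic.
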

\begin{proof}
Let 
\[f(E,B) = \min\left\{L_{B}(1,0), \frac{L_B(1,0)}{L_B(0,1)}E\right\}.\]
Note that $f$ is concave in $E$. Therefore, by Jensen's inequality, if $B$ is $\sigma(T)$-measurable, we have 
\begin{equation*}
    \E_P[f(E,B)|\sigma(T)] \leq  f(\E_P[E|\sigma(T)], B) = f(S_T,B). 
\end{equation*}
Since $\lr$ is \emph{minimally} sufficient, it follows that $\lr$ is $\sigma(T)$ measurable (apply the Fisher-Neyman characterization to write $\lr$ as a function of $T$). 
Therefore, 
\begin{align*}
    \E_Q[f(S_T,B)] &= \E_P[\lr f(S_T,B)] \geq \E_P[\lr \E_P [f(E,B)|\sigma(T)]] \\ 
    &= \E_P[\E_P [ \lr f(E,B)|\sigma(T)]] = \E_P[\lr f(E,B)] = \E_Q[f(E,B)].
\end{align*}
We have thus shown that 
\begin{equation*}
\E_Q[L_B(1,0)\delta_{S_T}(X,B)] = \E_Q[f(S_T,B)] \geq \E_Q[f(E,B)] = \E_Q[L_B(1,0) \delta_E(X,B)],
\end{equation*}
thus showing that $\delta_{S_T}$ is weakly-preferable to $\delta_E$ with respect to all adversaries $B$ that are $\sigma(T)$-measurable, as desired. When considering binary test families, we instead take $f(E,B) = L_B(1,0) \ind{E \geq L_B(0,1)}$, which is also concave in $E$. The rest of the argument remains unchanged. 
\end{proof}

For $\consta$-admissibility, Lemma~\ref{lem:inc-in-lr} will show that, not only is $\delta_E$ a function of the likelihood ratio, it must be an \emph{increasing} function of the likelihood ratio.

\subsection{$\consta$-admissibility}
\label{sec:const_admissibility}

The results of Section~\ref{sec:M-admissibility} were, in some sense, disappointingly liberal. As remark~\ref{rem:M-admissibility-NP-recovery} points out, Condition~\eqref{eq:C1} ensures that Theorems~\ref{thm:M-admissibility} and~\ref{thm:M-admissibility-binary} cannot be mapped back to the standard hypothesis testing setting. However, even if this were not the case, the result itself is unsatisfying: There are many sharp e-variables that do not correspond to the likelihood ratio test. Ideally, we would like a result which recovers the Neyman-Pearson lemma when instantiated with a single loss function. This motivates considering a different set $\Gamma$ of adversaries. In this section we study $\consta$-admissibility, where we recall that $\consta = \{ B: B(X) = b\text{ for some } b\text{ and all }X\}$. 

It is worth revisiting Remark~\ref{rem:safety-vs-admissibility} at this point. We stress that considering a restricted family $\Gamma$ of adversaries in the definition of admissibility does not change our notion of type-I risk safety, which continues to provide a guarantee with respect to all adversaries. In other words, the set of type-I risk safe test families stays the same, but the subset which are $\consta$-admissible is smaller (or so it will turn out) than the set which are $\alla$-admissible. 

For test families which are permitted to randomize, we give two necessary conditions to be $\consta$-admissible. The first is that the test families must be increasing functions of the likelihood ratio. The second is that, when $P$ and $Q$ are continuous, the e-variable associated to the test must be compatible with $\calL$. For binary test families we give a complete characterization of all $\consta$-admissible rules in terms of e-variables. As in Section~\ref{sec:M-admissibility} we consider a point null $P$ and a point alternative $Q$.

Studying $\consta$-admissibility has an interesting consequence: The type-II losses do not matter. Indeed, instantiating Definition~\ref{def:Pi-admissible} when $\Gamma=\consta$, we see that $\delta$ is $\consta$-inadmissible if, 
there exists a $\phi$ such that for all $b\in\calB$, $\E_Q[L_b(1,\delta(X,b))] \leq  \E_\Q[L_b(1,\phi(X,b))]$, i.e., $\E_Q[\delta(X,b))] \leq \E_Q[\phi(X,b)]$ for all $b\in\calB$, 
with a strict inequality for some $b$. That is, the precise values of $L_b(1,0)$, assuming they are not zero, do not affect the admissibility of a test. 

In other words, subject to type-I risk safety, we are  looking for tests which maximize their power on each fixed $b$. This leads us to the counterfactual interpretation of $\consta$-admissibility of $\delta$: For every other test $\phi$, there exists a $b\in\calB$ such that had we set our significance level to be $L_b^{-1}(0,1)$ at the beginning of the experiment, we would not regret using $\delta$ instead of $\phi$ (in terms of power, i.e., $\E_Q[\delta(X,b)]$).

We begin by proving that any randomized $\consta$-admissible test must be increasing in the likelihood ratio. The result is, of course, satisfyingly intuitive---a larger likelihood ratio indicates indicates more evidence against the null. A test that rejects the null at $\lr(X)$ but fails to reject when $\lr(Y)>\lr(X)$ is doing so despite increased odds in favor of the alternative. That such a result does not hold for richer classes of adversaries $\Gamma$ is yet another reason to consider $\consta$. Indeed, tests which don't satisfy this property are flaunting the law of likelihood~\citep{royall2017statistical}.

\begin{lemma}
\label{lem:inc-in-lr}
Let $\delta$ be a randomized test family. If a randomized test family $\delta$ is $\consta$-admissible then $E_\delta$ is an increasing function of the likelihood ratio $P$-almost surely. Consequently, for each fixed $b\in \calB$, $\delta(X,b)$ is an increasing function of the likelihood ratio $P$-almost surely. 
\end{lemma}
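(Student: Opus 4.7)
By Lemma~\ref{lem:representation}, every $\consta$-admissible $\delta$ is canonical, so $\delta(X,b) = \min\{1, E_\delta(X)/L_b(0,1)\} =: g_b(E_\delta(X))$ with $g_b$ nondecreasing in its argument. Hence $\delta(X,b)$ is a nondecreasing function of $E_\delta(X)$ for each fixed $b$, and the ``consequently'' clause follows once the first claim is established. So the task reduces to showing that $E_\delta$ is an increasing function of $\lr$ $P$-almost surely.

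The plan is to argue by contradiction via a rearrangement of $E_\delta$. If $E_\delta$ is not an increasing function of $\lr$, then passing to rationals one obtains $\ell_1 < \ell_2$ and $e_1 < e_2$ such that $A := \{\lr \leq \ell_1, E_\delta \geq e_2\}$ and $B := \{\lr \geq \ell_2, E_\delta \leq e_1\}$ both have positive $P$-measure. By Lemma~\ref{lem:E<supb}, $\consta$-admissibility forces $E_\delta \leq \sup_b L_b(0,1)$ $P$-a.s., so $\sup_b L_b(0,1) \geq e_2$, and we may fix $b^* \in \calB$ with $L_{b^*}(0,1) > e_1$. I would construct $\tilde E$ by transferring the largest values of $E_\delta$ from the low-$\lr$ region $A$ to the high-$\lr$ region $B$: in the atomless case, take equal-measure subsets $A' \subseteq A$, $B' \subseteq B$ and a measure-preserving bijection $\tau: A' \to B'$, and define $\tilde E := E_\delta \circ \tau$ on $A'$, $\tilde E := E_\delta \circ \tau^{-1}$ on $B'$, and $\tilde E := E_\delta$ elsewhere. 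The swap preserves the $P$-distribution of $E_\delta$, so $\int \tilde E \, \d P = \int E_\delta \, \d P \leq 1$, making $\tilde E$ an e-variable and $\delta_{\tilde E}$ type-I risk safe.

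For the comparison, the difference $\E_Q[g_b(\tilde E)] - \E_Q[g_b(E_\delta)] = \E_P[\lr (g_b(\tilde E) - g_b(E_\delta))]$ collapses, after a change of variables through $\tau$, to
\[
\int_{A'} [g_b(E_\delta \circ \tau) - g_b(E_\delta)]\,[\lr - \lr \circ \tau]\, \d P.
\]
On $A'$ both brackets are nonpositive---the first because $E_\delta \circ \tau \leq e_1 < e_2 \leq E_\delta$ and $g_b$ is nondecreasing, the second because $\lr \leq \ell_1 < \ell_2 \leq \lr \circ \tau$---so the integrand is nonnegative, giving weak improvement for every $b$. At $b = b^*$, the choice $L_{b^*}(0,1) > e_1$ ensures $g_{b^*}$ is strictly increasing on $[0, L_{b^*}(0,1)]$, which contains the values $E_\delta \circ \tau$ takes on $A'$ (all lying in $[0, e_1]$), so the first bracket is strictly negative throughout $A'$ and the integrand is strictly positive there. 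Hence $\delta_{\tilde E}$ is strictly preferable to $\delta$ on the constant adversary $b^*$, contradicting $\consta$-admissibility.

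The main technical obstacle is realizing $\tilde E$ when $P$ has atoms of mismatched masses on $A$ and $B$, where an exact measure-preserving bijection between equal-measure subsets may not exist. In that case the clean swap must be replaced by a pairwise mass-preserving atomic perturbation---decrease $E_\delta$ by a small $\Delta_1$ at an atom $x \in A$ and increase by $\Delta_2 := \Delta_1 P(\{x\})/P(\{y\})$ at an atom $y \in B$, with $\Delta_1$ chosen small enough to avoid any saturation on $A$---after which the same sign analysis, supplemented by a short case split on whether $L_b(0,1) \leq e_1$, lies in $(e_1, e_2]$, or exceeds $e_2$, reproduces weak improvement for every $b$ and strict improvement at $b^*$.
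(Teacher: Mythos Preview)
Your argument is correct and takes a genuinely different route from the paper's. The paper builds the competing test by additively perturbing $\delta(X,b)$ by $\mp\eps_i/L_b(0,1)$ on the two misaligned sets, but \emph{only} for $b$ in a high-loss range $\calB_2=\{b:L_b(0,1)>\gamma_2+\Delta\}$; for smaller $b$ the test is left untouched, so weak preference is automatic there, and strict preference is verified on $\calB_2$ via the inequality $Q(A_1)/Q(A_2)<P(A_1)/P(A_2)$ (Lemma~\ref{lem:QA>QB}). The resulting $\widehat\delta$ is \emph{not} canonical. You instead perturb the e-variable itself and compare the canonical tests $\delta_{\tilde E}$ and $\delta_{E_\delta}$, which obliges you to check weak improvement at \emph{every} $b$---but in the atomless case the swap makes this effortless: type-I risk is preserved because the $P$-law of $E_\delta$ is unchanged, and the type-II gain collapses to an integral of a product of two nonpositive factors. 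This rearrangement is more conceptual and avoids the delicate $\calB_0/\calB_1/\calB_2$ bookkeeping, at the price of invoking the measure-isomorphism theorem and needing a separate treatment of atoms. The paper's perturbation is more pedestrian but handles continuous and discrete $P$ uniformly in a single argument. Your atomic sketch is essentially right, though the case split should really be organized around whether $L_b(0,1)$ exceeds $E_\delta(x)-\Delta_1$ (not $e_1,e_2$), and you should record that $\Delta_1,\Delta_2$ are taken small enough that $E_\delta(y)+\Delta_2 < E_\delta(x)-\Delta_1$; with those constraints the Lipschitz bound $|g_b(u)-g_b(v)|\le |u-v|/L_b(0,1)$ and the identity $\Delta_2 P(\{y\})=\Delta_1 P(\{x\})$ give the net change $\ge \Delta_1 P(\{x\})(\lr(y)-\lr(x))/L_b(0,1)\ge 0$ for every $b$, with strict inequality at $b^*$.
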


The proof may be found in Appendix~\ref{sec:proof-inc-in-lr}. 
\ifarxiv The idea is simple: Any test which is not increasing in $\lr$ can be perturbed such that it becomes strictly better. The details are intricate, however, requiring a careful analysis and strictly more edge cases than is healthy. 
\fi 
This result mirrors a result by \citet{koning2024continuous} (see Theorem 1) with a similar flavor: he shows that e-variables which maximize a fixed expected concave utility under a (simple) alternative must also be increasing in the likelihood ratio. Connecting his result with Lemma~\ref{lem:inc-in-lr} may be an interesting avenue for future research.

Our second result shows that when $P$ and $Q$ are continuous,  Lemma~\ref{lem:compatible} holds for randomized tests as well as binary ones. The proof may be found in Appendix~\ref{sec:proof-compatible-inc}.

\begin{lemma}
\label{lem:compatible-inc}
Let $P$ and $Q$ be continuous. If a randomized test family $\delta$ is $\consta$-admissible then $E_\delta$ is compatible with $\calL$. 
\end{lemma}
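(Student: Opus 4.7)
The plan is proof by contradiction. Assume $\delta$ is $\consta$-admissible but $E_\delta$ is not compatible with $\calL$. By Lemma~\ref{lem:representation}, $\delta$ is canonical, i.e., $\delta(X,b) = \min\{1, E_\delta(X)/L_b(0,1)\}$, and by Lemma~\ref{lem:inc-in-lr}, $E_\delta$ is an increasing function of the likelihood ratio $\lr$. The strategy is to construct a type-I risk safe perturbation $\widetilde{\delta}$ that has strictly smaller expected type-II loss under some constant adversary $b^*$, exploiting the incompatibility to ``shift mass'' from lower-$\lr$ to higher-$\lr$ portions of the level set of $E_\delta$.

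Concretely, I would first extract consecutive values $l_1 \in \calL\cup\{0\}$ and $l_2 \in \calL$ with $l_1 < l_2$ and $(l_1, l_2) \cap \calL = \emptyset$ (the ``gap'' case; the dense case is discussed below), together with an interval $[\lambda_1, \lambda_2)$ in $\lr$-space such that $P(\lr \in [\lambda_1, \lambda_2)) > 0$ and $E_\delta(X) \in (l_1, l_2)$ for all $X$ with $\lr(X) \in [\lambda_1, \lambda_2)$. By continuity of the law of $\lr$ under $P$ on this interval, choose $\lambda_1 < \lambda^*_1 < \lambda^*_2 < \lambda_2$ such that $A_1 := \{\lambda_1 \le \lr < \lambda^*_1\}$ and $A_2 := \{\lambda^*_2 \le \lr < \lambda_2\}$ each have positive $P$-measure, and set
\begin{equation*}
\widetilde{E}(X) = \begin{cases} l_1, & X \in A_1, \\ l_2, & X \in A_2, \\ E_\delta(X), & \text{otherwise}.\end{cases}
\end{equation*}
Choose $\lambda^*_1, \lambda^*_2$ so that the budget balances, $\int_{A_1}(E_\delta - l_1)\,dP = \int_{A_2}(l_2 - E_\delta)\,dP$, ensuring $\E_P[\widetilde E] = \E_P[E_\delta] \le 1$. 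Together with $\widetilde{E} \le \sup_b L_b(0,1)$ (which holds by Lemma~\ref{lem:E<supb} and $l_1, l_2 \in \calL \cup \{0\}$), this makes $\widetilde{\delta} := \delta_{\widetilde{E}}$ type-I risk safe.

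To see that $\widetilde{\delta}$ strictly improves on $\delta$ under some constant adversary, pick any $b^*$ with $L_{b^*}(0,1) = l_2$. Since $(l_1, l_2)\cap\calL = \emptyset$, a direct computation using the canonical form yields
\begin{equation*}
L_{b^*}(0,1)\,\E_Q[\widetilde{\delta}(X, b^*) - \delta(X, b^*)] = \int_{A_2}(l_2 - E_\delta)\,\lr\, dP - \int_{A_1}(E_\delta - l_1)\,\lr\, dP.
\end{equation*}
On $A_2$, $\lr \ge \lambda^*_2$ strictly on a set of positive $P$-measure; on $A_1$, $\lr \le \lambda^*_1$. Hence the right-hand side strictly exceeds $\lambda^*_2 \int_{A_2}(l_2 - E_\delta)\,dP - \lambda^*_1 \int_{A_1}(E_\delta - l_1)\,dP = (\lambda^*_2 - \lambda^*_1)\,c$, where $c > 0$ is the common budget. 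Since $\lambda^*_2 > \lambda^*_1$, the expected type-II loss of $\widetilde{\delta}$ under $B \equiv b^*$ is strictly less than that of $\delta$, contradicting $\consta$-admissibility.

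The main obstacle is the case where no such gap exists around the ``bad'' value $v$, i.e., $\calL$ is dense from both sides near every $v \notin \calL$ at which $E_\delta$ takes values. In that situation one instead chooses $l_1, l_2 \in \calL$ arbitrarily close to $v$ and repeats the argument, now having to control extra contributions from losses $b$ with $L_b(0,1) \in (l_1, l_2)$; these contributions are handled by the same likelihood-ratio-averaging principle, provided $l_2 - l_1$ is taken small enough relative to $\lambda^*_2 - \lambda^*_1$. A secondary issue is ensuring that $A_1$ and $A_2$ can be taken with positive $P$-measure simultaneously with budget balance; this follows from an intermediate-value argument in the parameters $\lambda^*_1, \lambda^*_2$, supplemented by a direct redistribution around an atom of $\lr$ in the case $\lr$ has an atom in $[\lambda_1, \lambda_2)$.
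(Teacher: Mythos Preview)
Your perturbation idea is correct and close in spirit to the paper's, though the execution differs. The paper's proof (Appendix~\ref{sec:proof-compatible-inc}) does \emph{not} invoke Lemma~\ref{lem:inc-in-lr}. It first applies Lemma~\ref{lem:E>=Lb} to rule out $E_\delta\in(0,\inf\calL)$, takes the bad set $A$ where $E_\delta$ lies in a gap of $\calL$, splits $A=A_1\cup A_2$ with $P(A_1)=P(A_2)$ and $\sup_{A_1}\lr<\inf_{A_2}\lr$, and then perturbs the test by $\pm\eps/L_b(0,1)$ for every $b$ with $L_b(0,1)$ above the gap. This yields $E_{\deltahat}=E_\delta\mp\eps$ on $A_1,A_2$; type-I risk is preserved exactly by $P(A_1)=P(A_2)$, and strict type-II improvement for each such $b$ follows directly from $Q(A_2)>Q(A_1)$ (Lemma~\ref{lem:QA>QB}). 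Your version snaps $\widetilde E$ to the gap endpoints $l_1,l_2$ and balances a $P$-budget via an IVT argument; this also works, but brings in Lemma~\ref{lem:inc-in-lr} unnecessarily and makes the balancing (with $\lambda_1^*<\lambda_2^*$ strictly) more delicate than the paper's equal-measure split. Both treatments leave the dense case as a sketch.

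There is one genuine omission in your writeup: establishing strict improvement at a single $b^*$ does \emph{not} by itself contradict $\consta$-admissibility. You must also verify that $\widetilde\delta$ is weakly preferable to $\delta$, i.e., $\E_Q[\widetilde\delta(X,b)]\ge\E_Q[\delta(X,b)]$ for \emph{every} $b\in\calB$. In your construction this does hold---for $L_b(0,1)\le l_1$ both tests equal $1$ on $A_1\cup A_2$; for $L_b(0,1)\ge l_2$ your computation for $b^*$ applies verbatim; the gap assumption rules out other $b$---but your sentence ``contradicting $\consta$-admissibility'' is premature without this check. The paper avoids the issue structurally: its perturbation leaves the test unchanged for all $b$ below the gap and uniformly improves it for every $b$ above.
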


It is worth noting that the properties given in Lemmas~\ref{lem:inc-in-lr} and \ref{lem:compatible-inc} are both on display for the likelihood ratio test. For continuous $P$ and $Q$, its associated e-value is $E^\np(X) = L_{b^*}(0,1)$ if $\lr(X) \geq \kappa(b^*)$ and 0 otherwise (see Section~\ref{sec:np-tests}), which is clearly compatible with $\calL$. However, in the discrete case, the e-value obeys $E^\np(X) = L_{b^*}(0,1)(1 + \gamma)\ind{\lr(X)\geq \kappa(b^*)}$, which may not belong to $\calL$. Hence, the assumption of continuity in Lemma~\ref{lem:compatible-inc} cannot be removed.  

We leave it as an open question whether the conditions specified by Lemmas~\ref{lem:inc-in-lr} and~\ref{lem:compatible-inc} (in addition, of course, to the canonical representation in Lemma~\ref{lem:representation}) are also sufficient to specify $\consta$-admissible tests. We expect the answer is no. We do, however, conjecture that in addition to our extensions of the likelihood ratio test, \emph{mixtures} of likelihood ratio tests are $\consta$-admissible, subject to compatibility.  More formally, if $\nptest_k$ is the post-hoc likelihood ratio test on loss $b_k$ (i.e., the test defined by~\eqref{eq:np-rule}) then we suspect that the test defined by the (sharp) e-variable
\begin{equation}
    E_{\text{mix}} = \frac{1}{K}\sum_{k=1}^K E_{\phi_k},
\end{equation}
is $\consta$-admissible, as long as $\frac{1}{K}\sum_{k\leq j} L_{b_k}(0,1) \in \calB$ for each $1\leq j\leq K$. Note that $\nptest_k$ is $\consta$-admissible by Proposition~\ref{prop:np-decision-rules}.

\subsubsection{Binary test families}
\label{sec:const_admissibility_binary}

We now consider $\consta$-admissible binary test families. In this case we can provide a complete characterization of admissibility. As usual for binary tests, throughout this section we assume that $P$ and $Q$ are continuous. 
We begin with the analogue of Lemma~\ref{lem:inc-in-lr} in the binary case. The proof is in Appendix~\ref{sec:proof-inc-in-lr-binary}. 

\begin{lemma}
\label{lem:inc-in-lr-binary}
Let $P$ and $Q$ be continuous and let $\vp$ be a binary test family. If $\vp$ is $\consta$-admissible then $E_{\vp}$ is increasing in the likelihood ratio $P$-almost surely. Consequently, for each fixed $b\in\calB$, $\vp(X,b)$ is an increasing function of the likelihood ratio $P$-almost surely. 
\end{lemma}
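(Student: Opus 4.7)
The strategy parallels Lemma~\ref{lem:inc-in-lr} but leverages the additional structure given by Lemma~\ref{lem:compatible} in the binary case. Suppose for contradiction that $\vp$ is $\consta$-admissible but $E_\vp$ fails to be an increasing function of $\lr$ on a set of positive $P\otimes P$-measure. By Lemmas~\ref{lem:representation-binary} and~\ref{lem:compatible} we may assume $\vp$ is canonical and $E_\vp(X)\in\calL\cup\{0\}$ $P$-almost surely. Extracting from the failure of monotonicity via standard measurability arguments (partitioning by rational thresholds on $\lr$ and by values in $\calL\cup\{0\}$), we obtain disjoint measurable sets $A,B\subset\calX$ of positive $P$-measure and levels $e_1>e_2$ in $\calL\cup\{0\}$ such that $E_\vp\equiv e_1$ on $A$, $E_\vp\equiv e_2$ on $B$, and $\lr(x)<\lr(y)$ with a strictly positive gap for every $x\in A$ and $y\in B$. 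Note $e_1\in\calL$, since otherwise $e_1=0$ would force $e_2<0$.

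Next construct a strictly preferable binary test $\psi$ by swapping the values of $E_\vp$ between equal-$P$-mass slices of $A$ and $B$. Pick $A'\subseteq A$ and $B'\subseteq B$ with $P(A')=P(B')>0$ (possible since $P$ is atomless) and define
\begin{equation*}
    E_\psi(X) \equiv e_2\,\ind{X\in A'} + e_1\,\ind{X\in B'} + E_\vp(X)\,\ind{X\notin A'\cup B'}.
\end{equation*}
Then $\E_P[E_\psi]-\E_P[E_\vp]=(e_2-e_1)(P(A')-P(B'))=0$, so $E_\psi$ is an e-variable for $P$ that remains compatible with $\calL$. Let $\psi\equiv\vp_{E_\psi}$ be the associated canonical binary test; it is automatically type-I risk safe.

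A case analysis of $\psi(X,b)-\vp(X,b)=\ind{L_b(0,1)\leq E_\psi(X)}-\ind{L_b(0,1)\leq E_\vp(X)}$ shows that $\psi$ and $\vp$ agree off $A'\cup B'$ for every $b$, and on $A'\cup B'$ they agree whenever $L_b(0,1)\notin(e_2,e_1]$. For $b$ with $e_2<L_b(0,1)\leq e_1$, the difference equals $\ind{X\in B'}-\ind{X\in A'}$. Taking the constant adversary $b^*$ with $L_{b^*}(0,1)=e_1$ (which exists in $\calL$),
\begin{equation*}
    \E_Q[\psi(X,b^*)]-\E_Q[\vp(X,b^*)] \;=\; Q(B')-Q(A') \;=\; \int_{B'}\lr\,\d P-\int_{A'}\lr\,\d P \;>\;0,
\end{equation*}
where strict positivity uses $P(A')=P(B')$ together with the uniform $\lr$-gap between $A$ and $B$. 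For every other constant adversary $b$, the same analysis yields $\E_Q[\psi(X,b)]\geq\E_Q[\vp(X,b)]$. Translating via $L_b(1,\cdot)=L_b(1,0)(1-\cdot)$, $\psi$ is weakly preferable to $\vp$ relative to $\consta$ and strictly preferable on $b^*$, contradicting $\consta$-admissibility.

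The ``consequently'' clause follows from the canonical form: $\vp(X,b)=\ind{E_\vp(X)\geq L_b(0,1)}$ is nondecreasing in $E_\vp(X)$, and $E_\vp$ is an increasing function of $\lr$ by what was just proved, so $\vp(\cdot,b)$ is an increasing function of $\lr$. The main obstacle I anticipate, as in Lemma~\ref{lem:inc-in-lr}, is the edge case in which $\lr$ has an atom $\{\lr=\lambda\}$ with $P(\lr=\lambda)>0$ on which $E_\vp$ takes multiple values: the simple intra-atom swap is then $Q$-neutral (since $Q|_{\{\lr=\lambda\}}=\lambda\,P|_{\{\lr=\lambda\}}$), so one must combine the swap with an inter-atom transfer to a region of strictly higher $\lr$ (which compatibility and sharpness of $E_\vp$ typically leave room for), or else argue that within-atom rearrangements can be absorbed into a $P$-null modification of $\vp$.
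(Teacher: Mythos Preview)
Your swap idea is in the same spirit as the paper's, but there is a real gap in the extraction step. You claim to find sets $A,B$ of positive $P$-measure on which $E_\vp$ is \emph{constant}, equal to fixed levels $e_1>e_2\in\calL\cup\{0\}$; the subsequent construction of $E_\psi$ and the mass-preservation identity $\E_P[E_\psi]-\E_P[E_\vp]=(e_2-e_1)(P(A')-P(B'))=0$ both hinge on this constancy. Your ``partitioning by values in $\calL\cup\{0\}$'' delivers positive-measure level sets only when $\calL$ is countable. The paper explicitly allows uncountable $\calL$---the running example is $L_b(0,1)=b$ with $\calL=[1,\infty)$---and in that regime a compatible $E_\vp$ can have a diffuse law under $P$, so no level set $\{E_\vp=e\}$ carries mass and your swap with fixed constants $e_1,e_2$ is undefined. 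You could repair this by transporting the $E_\vp$-values between equal-$P$-mass slices $A'$ and $B'$ via a $P$-preserving bijection of standard Borel spaces (which keeps $\E_P$ and compatibility intact), but that is a genuinely different construction from what you wrote.

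The paper sidesteps the issue by swapping at the level of the binary \emph{test} rather than the e-value, so no constancy of $E_\vp$ is needed and Lemma~\ref{lem:compatible} is never invoked. It fixes one $b^*$ at which $\vp(\cdot,b^*)$ is out of order, locates $A_1,A_2$ with $\vp\equiv1$ on $A_1$, $\vp\equiv0$ on $A_2$ at that $b^*$ but $\sup_{A_1}\lr<\inf_{A_2}\lr$, carves out an interval $G=[t_-,t_+]\subset\calB$ of scenarios on which the swap (set $\vp\to0$ on $A_1\times G$, $\vp\to1$ on $A_2\times G$) keeps the rule binary and type-I risk safe, and concludes via Lemma~\ref{lem:QA>QB} after equalizing $P(A_1)=P(A_2)$. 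The remainder of your argument---the case analysis of $\psi-\vp$ and the strict improvement at the constant adversary $b^*$---is correct once valid sets are in hand.
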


One might initially suspect that Lemma~\ref{lem:inc-in-lr-binary} follows from Lemma~\ref{lem:inc-in-lr}. In that proof, however, we tweaked the given randomized test slightly, and there was no guarantee that the resulting test family was binary (in fact it almost certainly would not be). While the spirit of the proof of Lemma~\ref{lem:inc-in-lr-binary} is the same, the mechanics are quite different and use the fact that we begin with a binary test. 

Recall from Lemma~\ref{lem:representation-binary} that we can represent any $\consta$-admissible binary test as $\vp(X,b) = \ind{E_{\vp}(X) \geq L_b(0,1)}$. At a fixed $b$, this allows us to compare two binary rules by comparing their e-values. Lemma~\ref{lem:inc-in-lr-binary} implies that there exists some threshold $t(b)$ such that $\vp(X,b) = \ind{\lr(X) \geq t(b)}$ and, moreover, $t$ is increasing in $b$ by Lemma~\ref{lem:decreasing_in_b}. This allows us to compare two tests at the same $X$ by comparing these thresholds. Moving forward, our analysis will make heavy use of these thresholds. Let us state this formally. The proof is in Appendix~\ref{proof:cor-decision-curve}.  

\begin{corollary}
\label{cor:decision-curve}
    Let $P$ and $Q$ be continuous and let $\vp$ be a $\consta$-admissible binary test family. Then there exists an increasing function, henceforth called a \emph{decision curve}, $t:\calB\to\Re_{\geq 0}\cup\{\infty\}$ such that 
    \begin{equation}
    \label{eq:decision-curve}
        \vp(X,b) = \ind{\lr(X) \geq t(b)}.
    \end{equation}
    We write the decision curve associated with  $\vp$ as $t_\vp$.  
\end{corollary}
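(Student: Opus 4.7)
The corollary looks like a direct consequence of the two lemmas we already have at hand, namely Lemma~\ref{lem:inc-in-lr-binary} (the test is an increasing function of $\lr$ in the $X$-variable) and Lemma~\ref{lem:decreasing_in_b} (the test is decreasing in $b$). The plan is to first use the former to extract a threshold for each fixed $b$, and then use the latter to show these thresholds are monotone in $b$.

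First, I would fix $b \in \calB$ and apply Lemma~\ref{lem:inc-in-lr-binary} to conclude that $X \mapsto \vp(X,b)$ is an increasing function of $\lr(X)$, $P$-almost surely. Because $\vp(\cdot,b)$ takes values only in $\{0,1\}$, any such increasing function is of the form $\ind{\lr(X) \geq s}$ up to a $P$-null set for some threshold. Concretely, I would define
\[
t(b) \;\equiv\; \inf\bigl\{\, s \geq 0 : P\bigl(\lr(X) \geq s,\; \vp(X,b)=0\bigr) = 0 \,\bigr\},
\]
with the convention $\inf\emptyset = \infty$. A short argument (using that $\vp(\cdot,b)$ is increasing in $\lr$ $P$-a.s.\ together with right-continuity of the distribution function of $\lr$) shows that with this choice $\vp(X,b) = \ind{\lr(X) \geq t(b)}$ on a set of full $P$-measure, establishing~\eqref{eq:decision-curve} pointwise (up to a null set) for each $b$.

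Second, to see that $t$ is increasing, I would pick $b_1 \leq b_2$ and apply Lemma~\ref{lem:decreasing_in_b} to get $\vp(X,b_1) \geq \vp(X,b_2)$ $P$-almost surely. Substituting the representations just derived gives $\ind{\lr(X) \geq t(b_1)} \geq \ind{\lr(X) \geq t(b_2)}$ $P$-a.s., i.e.\ the event $\{\lr \geq t(b_2)\}$ is $P$-almost contained in $\{\lr \geq t(b_1)\}$. If $t(b_1) > t(b_2)$ and the interval $[t(b_2), t(b_1))$ carried positive $P$-mass of $\lr$, this would be a contradiction; if it carried no mass, we may simply redefine $t(b_2) := t(b_1)$ without changing~\eqref{eq:decision-curve} on a set of full $P$-measure. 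Either way, $t(b_1) \leq t(b_2)$.

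The only mildly delicate step is the first one, namely pinning down the threshold $t(b)$ in the presence of possible atoms of $\lr$ under $P$ (since $P$ is continuous but $\lr$ need not have a continuous distribution, for instance if the model is not dominated nicely). The infimum definition above sidesteps this by absorbing any ambiguity at the boundary level set into the $P$-null set on which the equality~\eqref{eq:decision-curve} is allowed to fail, and the increasing-in-$\lr$ property from Lemma~\ref{lem:inc-in-lr-binary} forces $\vp(\cdot,b)$ to be $P$-a.s.\ constant on any such level set, so no inconsistency arises.
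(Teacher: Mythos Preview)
Your argument is correct. You proceed by fixing $b$, extracting a threshold $t(b)$ directly from the fact that $\vp(\cdot,b)$ is a $\{0,1\}$-valued increasing function of $\lr$ (Lemma~\ref{lem:inc-in-lr-binary}), and then invoking Lemma~\ref{lem:decreasing_in_b} to force $t$ to be increasing. The paper takes a slightly different route: it uses that $E_\vp$ is an increasing function of $\lr$, writes $E_\vp = h(\lr)$ for some increasing $h$, and then combines this with the canonical representation $\vp(X,b)=\ind{E_\vp(X)\geq L_b(0,1)}$ from Lemma~\ref{lem:representation-binary} to obtain $t(b)=h^-(L_b(0,1))$ directly, where $h^-$ is the generalized inverse. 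Monotonicity of $t$ is then automatic from $h^-$ being increasing and $b\mapsto L_b(0,1)$ being increasing, so Lemma~\ref{lem:decreasing_in_b} is not needed separately. The paper's route has the advantage of producing an explicit formula $t(b)=h^-(L_b(0,1))$, which is reused later (e.g.\ in the discussion after Theorem~\ref{thm:const-admissibility-binary} to show $\vp_E$ has a minimal decision curve); your route is more self-contained and avoids the detour through $E_\vp$, at the cost of not yielding this formula.
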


\begin{remark}
\label{rem:decision-curve-uniqueness}
There can be multiple decision curves for a given test, but they can differ only outside the range of $\lr$ under $P$ and $Q$. That is, if $t$ and $\ell$ are two decision curves for $\vp$, then $P(t(b)\leq \lr(X)<\ell(b)) = 0$ for all $b$, otherwise $\vp$ is not a well-defined function. What happens outside the support of $P$ and $Q$ does not affect admissibility, so going forward we will refer to the unique decision curve $t_\vp$ of a $\consta$-admissible test $\vp$.  
\end{remark}

As an example, if $\vp$ is the binary NP-type decision rule defined by~\eqref{eq:np-rule-binary} in Section~\ref{sec:np-tests}, then $t_\vp(b)=\infty$ for $b>b^*$ and $t_\vp(b) = \kappa(b^*)$ for $b\leq b^*$.

A test can be shown to be better than a second if the decision curve of the first is never above that of the second and is sometimes strictly lower. More formally, 
we say $t_\phi$ \emph{$Q$-strictly dominates} $t_\vp$ if $t_\phi(b) \leq t_\vp(b)$ for all $b\in\calB$ and there exists some $b^*$ such that $t_\phi(b^*) < t_\vp(b^*)$ and $Q(t_\phi(b^*) \leq \lr(X) < t_\vp(b^*)) > 0$. 

It turns out that we can characterize the $\consta$-admissible by appealing to decision curves. Lemma~\ref{lem:admissibility_by_t} in the appendix does precisely this. But such a classification is not practically useful. To move beyond decision curves and state $\consta$-admissibility in terms of e-variables, we require the following technical definition.

\begin{definition}
\label{def:minimal_decision_curve}
We say that a binary test family $\vp$ has a \emph{minimal decision curve} if for any $b^*\in\calB$, whenever $\sup_{b<b^*}L_b(0,1) = L_{b^*}(0,1)$, we either have $\sup_{b<b^*} t_\vp(b) = t_\vp(b^*)$ or $P(\sup_{b<b^*} t_\vp(b) \leq \lr(X) < t_\vp(b^*))=0$. 
\end{definition}

Definition~\ref{def:minimal_decision_curve} is vacuous whenever $\calL$ is discrete, meaning for every $L_b(0,1)\in\calL$ there exists some $\eps>0$ such that $(L_b(0,1) - \eps,L_b(0,1)+\eps)$ does not contain any other element of $\calL$. It is  relevant only for $\calL$ that is dense (or has a subset which is dense). 
When $P$ puts mass everywhere, the definition is equivalent to requiring that the decision curve be left continuous whenever the type-I losses are left continuous. 

Lemma~\ref{lem:minimal_decision_curve} shows that a $\consta$-admissible test must have a minimal decision curve. 
The intuition is as follows. Suppose $\sup_{b<b^*}L_b(0,1) = L_{b^*}(0,1)$ but $\sup_{b<b^*}t_\vp(b) < t_\vp(b^*)$. Then, for $X$ such that $\sup_{b<b^*}t_\vp(b) \leq \lr(X) \leq t_\vp(b^*)$, $\sup_b L_b(0,1) \ind{\lr(X) \geq t_\vp(b)} = L_{b^*}(0,1)$. 
However, $\vp(X,b^*) = 0$ for such $X$. That is, $\vp$ is suffering a type-I loss of $L_{b^*}(0,1)$ on such $X$ but not receiving the benefits of rejecting on loss $b^*$. In this case, one can define a second test family which rejects on $b^*$ and suffers the same type-I loss, thus making $\vp$ $\consta$-inadmissible.

We can now state the main result of this section. 
The proof is in Appendix~\ref*{sec:proof-const-admissibility-binary}.

\begin{theorem}
    \label{thm:const-admissibility-binary}
    Let $P$ and $Q$ be continuous.  
    A canonical binary test family $\vp$ is $\consta$-admissible if and only if it has a minimal decision curve and $E_{\vp}$ is sharp and an increasing function of the likelihood ratio. Moreover, given an e-variable $E$ for $P$, $\vp_E$ is $\consta$-admissible if and only if $E$ is sharp, compatible, and increasing in the likelihood ratio. 
\end{theorem}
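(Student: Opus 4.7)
The plan is to establish the first characterization and then bootstrap the second from it. For the ``only if'' direction of the first statement, each of the four necessary properties comes from a lemma already in hand: canonicity from Lemma~\ref{lem:representation-binary}, the minimal decision curve from Lemma~\ref{lem:minimal_decision_curve}, increasingness of $E_\vp$ in $\lr$ from Lemma~\ref{lem:inc-in-lr-binary}, and sharpness of $E_\vp$ from Lemma~\ref{lem:Ephi>Edelta}. So this direction is purely assembly.

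For the ``if'' direction, suppose $\vp$ enjoys all four properties. Writing $E_\vp = h \circ \lr$ for some increasing $h$ and combining with canonicity yields the decision curve representation $\vp(X,b) = \ind{\lr(X) \geq t_\vp(b)}$ with $t_\vp(b) = h^-(L_b(0,1))$. Suppose for contradiction that $\vp$ is $\consta$-inadmissible. By Lemma~\ref{lem:admissibility_by_t} there exists a binary type-I risk safe $\psi$ with $t_\psi \leq t_\vp$ pointwise, $t_\psi(b^*) < t_\vp(b^*)$ for some $b^*$, and $Q(A) > 0$ where $A = \{t_\psi(b^*) \leq \lr(X) < t_\vp(b^*)\}$. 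Since $Q \ll P$ we also have $P(A) > 0$. The inequality $t_\psi \leq t_\vp$ gives $\psi \geq \vp$ pointwise, hence $E_\psi \geq E_\vp$ pointwise. On $A$, on the one hand $\psi(X,b^*) = 1$ forces $E_\psi(X) \geq L_{b^*}(0,1)$; on the other hand the identity $\ind{h(z) \geq y} = \ind{z \geq h^-(y)}$ recorded in Section~\ref{sec:assumptions} yields $E_\vp(X) = h(\lr(X)) < L_{b^*}(0,1)$. Consequently $E_\psi > E_\vp$ on a set of positive $P$-measure, which together with sharpness of $E_\vp$ gives $\E_P[E_\psi] > \E_P[E_\vp] = 1$, contradicting the type-I risk safety of $\psi$.

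The second statement then reduces to the first. For ``only if,'' $\consta$-admissibility of $\vp_E$ and Lemma~\ref{lem:compatible} imply that $E$ is compatible and $E_{\vp_E} = E$; the ``only if'' direction of the first statement applied to $\vp_E$ then gives sharpness and increasingness of $E$ in $\lr$. For ``if,'' assume $E$ is sharp, compatible, and increasing in $\lr$. Then $\vp_E$ is canonical by construction and, by compatibility, $E_{\vp_E} = E$, so $E_{\vp_E}$ inherits sharpness and increasingness in $\lr$. Writing $E = h \circ \lr$, left-continuity of $h^-$ guarantees that whenever $\sup_{b<b^*} L_b(0,1) = L_{b^*}(0,1)$ we have $\sup_{b<b^*} t_{\vp_E}(b) = h^-(L_{b^*}(0,1)) = t_{\vp_E}(b^*)$, so the minimal decision curve property holds and the ``if'' direction of the first statement applies.

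The main obstacle is the ``if'' direction of the first statement: turning mere expectation-dominance by some $\psi$ into a strict pointwise separation of their e-values. Two ingredients that are easy to overlook make this go through: the passage from $Q(A) > 0$ to $P(A) > 0$ via the common-support assumption, and the use of the generalized-inverse identity to convert the threshold condition $\lr(X) < t_\vp(b^*)$ on $A$ into the strict e-value inequality $E_\vp(X) < L_{b^*}(0,1)$, which then strictly beats the automatic lower bound $E_\psi(X) \geq L_{b^*}(0,1)$ and produces the contradiction with sharpness.
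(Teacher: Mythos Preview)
Your proof is correct, and the ``if'' direction of the first statement is genuinely cleaner than the paper's argument. The paper proceeds by a two-case split on whether some intermediate threshold $t_\vp(b_0)$ lies strictly between $t_\psi(b^*)$ and $t_\vp(b^*)$; in the case where none does, it invokes the minimal decision curve hypothesis to force $\sup_{b<b^*}L_b(0,1) < L_{b^*}(0,1)$ and then bounds the integrand on $I(b^*)$ accordingly. You sidestep this entirely: from $\vp(X,b^*)=0$ on $A$ and canonicity (equivalently, the generalized-inverse identity) you read off $E_\vp(X) < L_{b^*}(0,1)$ directly, while $\psi(X,b^*)=1$ gives $E_\psi(X) \geq L_{b^*}(0,1)$, so $E_\psi > E_\vp$ strictly on a set of positive $P$-measure and $E_\psi \geq E_\vp$ everywhere, contradicting sharpness in one line.

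A side benefit of your route is that it makes visible something the paper's proof obscures: the minimal decision curve hypothesis is actually redundant in the ``if'' direction of the first statement. Once $\vp$ is canonical and $E_\vp = h\circ\lr$ is increasing, the decision curve $t_\vp(b) = h^-(L_b(0,1))$ is automatically left-continuous in $L_b(0,1)$, hence minimal. Your argument never calls on minimality and still goes through, confirming this. The paper's case analysis buys nothing extra; your approach is strictly more economical. The ``only if'' direction and the second statement are handled the same way in both proofs.
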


\ifarxiv Note that there is no mention of minimal decision curves in the second half of the theorem. This is because $\vp_E$ will automatically have a minimal decision curve if $E$ is an increasing function of the likelihood ratio. Indeed, if $E(X) = h(\lr(X))$ for some increasing function $h$ with generalized inverse $h^-(y) = \inf\{z: h(z) \geq y\}$, then $\vp_E(X,b) = \ind{E(X) \geq L_b(0,1)} = \ind{\lr(X) \geq h^-(L_b(0,1))}$, implying that the decision curve of $\vp_E$ is $t_\vp(b) = h^-(L_b(0,1))$ (see also the proof of Corollary~\ref{cor:decision-curve}) which is minimal since $h^-$ is left-continuous. 
\fi

As was done in Section~\ref{sec:M-admissibility}, it is worth stating the following immediate corollary of Theorem~\ref{thm:const-admissibility-binary} which highlights the direct correspondence between admissible tests and e-variables.   

\begin{corollary}
\label{cor:const-admissibility-binary}
    Let $P$ and $Q$ be continuous. A binary test family $\vp$ is $\consta$-admissible if and only if $\vp = \vp_E$ for some sharp e-variable $E$ for $P$ which is compatible with $\calL$ and increasing in the likelihood ratio.
\end{corollary}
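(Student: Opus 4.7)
My plan is to deduce the corollary directly from Theorem~\ref{thm:const-admissibility-binary}, using Lemma~\ref{lem:representation-binary} to ensure that every $\consta$-admissible binary test can be written in the form $\vp_E$, and Lemma~\ref{lem:compatible} to pick up compatibility. Since the theorem has already done all the substantive work, the corollary is essentially a matter of bundling its two halves into a single equivalence phrased in terms of e-variables.

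For the forward direction, suppose $\vp$ is $\consta$-admissible. First, I would apply Lemma~\ref{lem:representation-binary} to conclude that $\vp$ is canonical, i.e., $\vp(X,b) = \ind{L_b(0,1) \leq E_{\vp}(X)}$ $P$-almost surely. Comparing with the definition in equation~\eqref{eq:delta-from-E-binary}, this means $\vp = \vp_{E_{\vp}}$. Next, the first half of Theorem~\ref{thm:const-admissibility-binary} gives that $E_{\vp}$ is sharp and an increasing function of the likelihood ratio. Finally, since $\vp_{E_\vp} = \vp$ is $\consta$-admissible and $\consta \supset \consta$, Lemma~\ref{lem:compatible} yields compatibility of $E_{\vp}$ with $\calL$. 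Setting $E := E_{\vp}$ gives the desired representation $\vp = \vp_E$ with $E$ sharp, compatible, and increasing in $\lr$.

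For the backward direction, suppose $\vp = \vp_E$ for some sharp e-variable $E$ that is compatible with $\calL$ and increasing in the likelihood ratio. Then the second half of Theorem~\ref{thm:const-admissibility-binary} applies verbatim and immediately gives that $\vp_E$ is $\consta$-admissible.

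There is no genuine obstacle here; the only mild care required is to confirm that the e-variable produced in the forward direction is in fact $E_\vp$ (and not some other e-variable), so that Lemma~\ref{lem:compatible} can be applied with $\vp_E = \vp$. This is handled by first invoking Lemma~\ref{lem:representation-binary} to put $\vp$ in canonical form before extracting the properties of $E_\vp$ from Theorem~\ref{thm:const-admissibility-binary} and Lemma~\ref{lem:compatible}.
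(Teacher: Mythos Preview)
Your proposal is correct and follows the same route the paper intends: the corollary is immediate from Theorem~\ref{thm:const-admissibility-binary} together with Lemma~\ref{lem:representation-binary}. One minor streamlining: in the forward direction, once you have $\vp = \vp_{E_{\vp}}$ from Lemma~\ref{lem:representation-binary}, you can apply the \emph{second} half of Theorem~\ref{thm:const-admissibility-binary} directly with $E = E_{\vp}$ to obtain sharpness, compatibility, and monotonicity in one stroke, rather than invoking the first half plus Lemma~\ref{lem:compatible} separately.
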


Theorem~\ref{thm:const-admissibility-binary} makes no assumptions on the losses, hence can recover the Neyman-Pearson lemma for continuous distributions. Indeed, as we discussed previously, for a single loss $L(0,1)=1/\alpha$, the e-variable associated to the likelihood ratio $\nptest(X) = \ind{\lr(X) \geq \kappa}$ is $E^\np(X) = \alpha^{-1}\ind{\lr(X) \geq \kappa}$ and is sharp, increasing in $\lr$, and compatible by construction. We can also see that $\phi^\np$ is the unique test which satisfies Corollary~\ref{cor:const-admissibility-binary}.  Consider any other $\consta$-admissible rule, which can be written as $\vp_E(X) = \ind{E(X)\geq 1/\alpha}$ for some $E$. Since $E$ is compatible with $\calL= \{1/\alpha\}$ and $\E_P[E]=1$, we have $P(E(X) = 1/\alpha) = \alpha$. Additionally, $E = h(\lr)$ for some increasing $h$ by Lemma~\ref{lem:inc-in-lr-binary}, so $\vp_E(X) = \ind{\lr(X) \geq h^-(1/\alpha)}$. Therefore $\alpha = P(E(X)\geq 1/\alpha) = P( \lr(X) \geq h^-(1/\alpha))$, meaning that $h^-(1/\alpha)= \kappa$ and $\vp_E = \nptest$.  

Now consider $L_b(0,1)=b$ for all $b>0$. In this case the test 
\begin{equation}
\label{eq:lr-test-binary}
    \vp_\lr(X,b) = \ind{\lr(X) \geq L_b(0,1)},
\end{equation}
is $\consta$-admissible. Indeed, $\lr$ is sharp, compatible with $\Re_{\geq 0}$ and obviously increasing in itself. 
This test is satisfying in that it accords well with calls to report the likelihood ratio instead of the p-value~\citep{royall1986effect,royall2000probability,berger1987testing,perneger2001sifting}.  
This nicely bridges the Fisherian and Neyman-Pearson perspectives on hypothesis testing: If a study summarizes its findings via $\lr(X)$ as a measure of evidence, downstream analysts can reject at whatever level they like, maintaining type-I risk safety.

\section{Summary}
\label{sec:summary}

We studied post-hoc hypothesis testing, a framework which allows the significance level to be chosen as a function of the data. The notions of type-I and type-II error are replaced by type-I and type-II risk, which take the form of expectations over data-dependent type-I and type-II loss functions. 
In order to generalize the notion of a uniformly most powerful test to this setting, we introduced the notion of $\Gamma$-admissibility,   where $\Gamma$ is a set of functions mapping the data to a type-II loss function. 

Different families $\Gamma$ result in different classes of admissible tests. For point hypotheses, when $\Gamma$ is all mappings from the data to losses we show that all admissible tests correspond to sharp e-variables. When $\Gamma$ is the set of constant mappings and the tests are binary, we give a classification of all admissible tests which recovers the Neyman-Pearson lemma~\citep{neyman1933ix}. 

This work suggests several open questions. For one, in the case of randomized tests, the set of necessary and sufficient conditions for $\consta$-admissibility do not match. Can we close the gap and give a classification of all admissible tests in this case? As discussed in Section~\ref{sec:const_admissibility}, we expect that mixtures of likelihood ratio tests are admissible but we have been unable to prove it. Second, while we have focused mainly on point hypotheses, hypothesis testing in practice typically involves composite hypotheses. 
While some of our results extend straightforwardly to composite settings,\footnote{In particular, Lemmas~\ref{lem:phi>delta}, \ref{lem:decreasing_in_b}, and~\ref{lem:Ephi>Edelta} can be extended to composite settings if handled correctly. For instance, a composite version of Lemma~\ref{lem:phi>delta} would posit that $\phi(X,b) \geq \delta(X,b)$ $P$-almost surely for all $P\in\calP$ and that there exists some $Q\in\calQ$ such that $Q(\phi(X,b)>\delta(X,b))>0$. The representations in Lemmas~\ref{lem:representation} and \ref{lem:representation-binary} likewise hold in composite settings.} can we obtain composite versions of our main classification theorems?
Third, we have adopted the typical hypothesis testing setup of two actions: either sustaining or rejecting the null. \citet{grunwald2024beyond}, however, studies multiple actions. How must our results be modified in this case?  

Overall, post-hoc hypothesis testing is in its nascency. We hope our work provides a foundation that will inspire further research. The connections we have established between admissible tests and e-variables connect post-hoc hypothesis testing to exciting developments in modern statistics, and our generalization of classical results suggest that post-hoc testing may serve as a principled bridge between traditional hypothesis testing and more adaptive approaches to statistical inference. While our work here has been primarily theoretical, we also hope that post-hoc hypothesis testing may offer solutions to some of the problems plaguing various empirical fields as they grapple with issues of statistical validity and reproducibility. 

\ifarxiv 
\subsection*{Acknowledgments}
We thank the anonymous referees for helpful comments which improved the manuscript and 
Ruodo Wang for pointers towards measurable selection theorems. BC was supported in part by the CWI internship program and in part by NSERC-PGSD, grant no.\ 567944. BC and AR acknowledge support from NSF grants IIS-2229881 and DMS-2310718. 
PG was co-funded by the European Union (ERC Advanced Grant FLEX 101142168). 
\fi

\bibliographystyle{plainnat}
\bibliography{main}

@article{grunwald2024beyond,
  title={Beyond {Neyman--Pearson}: E-values enable hypothesis testing with a data-driven alpha},
  author={Gr{\"u}nwald, Peter},
  journal={Proceedings of the National Academy of Sciences},
  volume={121},
  number={39},
  pages={e2302098121},
  year={2024},
  publisher={National Academy of Sciences}
}

@article{ramdas2024hypothesis,
  title={Hypothesis testing with e-values},
  author={Ramdas, Aaditya and Wang, Ruodu},
  journal={Foundations and Trends in Statistics},
  year={2025}
}

@article{grunwald2023posterior,
  title={The e-posterior},
  author={Gr{\"u}nwald, Peter},
  journal={Philosophical Transactions of the Royal Society A},
  volume={381},
  number={2247},
  pages={20220146},
  year={2023},
  publisher={The Royal Society}
}

@article{royall1986effect,
  title={The effect of sample size on the meaning of significance tests},
  author={Royall, Richard},
  journal={The American Statistician},
  volume={40},
  number={4},
  pages={313--315},
  year={1986},
  publisher={Taylor \& Francis}
}

@article{perneger2001sifting,
  title={Sifting the evidence: Likelihood ratios are alternatives to P values},
  author={Perneger, Thomas V},
  journal={BMJ: British Medical Journal},
  volume={322},
  number={7295},
  pages={1184},
  year={2001}
}

@article{wagenmakers2007practical,
  title={A practical solution to the pervasive problems of p values},
  author={Wagenmakers, Eric-Jan},
  journal={Psychonomic bulletin \& review},
  volume={14},
  number={5},
  pages={779--804},
  year={2007},
  publisher={Springer}
}

@article{gardner1986confidence,
  title={Confidence intervals rather than P values: estimation rather than hypothesis testing},
  author={Gardner, Martin J and Altman, Douglas G},
  journal={Br Med J (Clin Res Ed)},
  volume={292},
  number={6522},
  pages={746--750},
  year={1986},
  publisher={British Medical Journal Publishing Group}
}

@book{bernardo1994bayesian,
  title={Bayesian theory},
  author={Bernardo, Jos{\'e} M and Smith, Adrian FM and Berliner, Mark},
  volume={586},
  year={1994},
  publisher={Wiley Online Library}
}

@book{gelman1995bayesian,
  title={Bayesian data analysis},
  author={Gelman, Andrew and Carlin, John B and Stern, Hal S and Rubin, Donald B},
  year={1995},
  publisher={Chapman and Hall/CRC}
}

@article{pratt1964foundations,
  title={The foundations of statistical inference},
  author={Pratt, John W},
  year={1964},
  publisher={JSTOR},
    journal={Quarterly of Applied Mathematics}, 
pages={170--172}
}

@incollection{gigerenzer2014superego,
  title={The superego, the ego, and the id in statistical reasoning},
  author={Gigerenzer, Gerd},
  booktitle={A Handbook for Data Analysis in the Behaviorial Sciences},
  pages={311--340},
  year={2014},
  publisher={Psychology Press}
}

@article{goodman1993p,
  title={P values, hypothesis tests, and likelihood: implications for epidemiology of a neglected historical debate},
  author={Goodman, Steven N},
  journal={American Journal of Epidemiology},
  volume={137},
  number={5},
  pages={485--496},
  year={1993},
  publisher={Oxford University Press}
}

@article{goodman1999toward,
  title={Toward evidence-based medical statistics. 1: The {P value} fallacy},
  author={Goodman, Steven N},
  journal={Annals of Internal Medicine},
  volume={130},
  number={12},
  pages={995--1004},
  year={1999},
  publisher={American College of Physicians}
}

@article{hubbard2011widespread,
  title={The widespread misinterpretation of p-values as error probabilities},
  author={Hubbard, Raymond},
  journal={Journal of Applied Statistics},
  volume={38},
    number={11},
  pages={2617--2626},
  year={2011},
  publisher={Taylor \& Francis}
}

@techreport{hubbard2003p,
  author       = {Hubbard, Raymond and Bayarri, M.~J.},
  title        = {P values are not error probabilities},
  institution  = {Institute of Statistics and Decision Sciences, Duke University},
  type         = {Working Paper},
  number       = {03-26},
  address      = {Durham, NC},
  year         = {2003},
}

@book{kechris2012classical,
  title={Classical descriptive set theory},
  author={Kechris, Alexander},
  volume={156},
  year={2012},
  publisher={Springer Science \& Business Media}
}

@article{kuratowski1965general,
  title={A general theorem on selectors},
  author={Kuratowski, Kazimierz and Ryll-Nardzewski, Czes{\l}aw},
  journal={Bull. Acad. Polon. Sci. S{\'e}r. Sci. Math. Astronom. Phys},
  volume={13},
  number={6},
  pages={397--403},
  year={1965}
}

@article{berger1987testing,
  title={Testing a point null hypothesis: The irreconcilability of p values and evidence},
  author={Berger, James O and Sellke, Thomas},
  journal={Journal of the American statistical Association},
  volume={82},
  number={397},
  pages={112--122},
  year={1987},
  publisher={Taylor \& Francis}
}

@article{royall2000probability,
  title={On the probability of observing misleading statistical evidence},
  author={Royall, Richard},
  journal={Journal of the American Statistical Association},
  volume={95},
  number={451},
  pages={760--768},
  year={2000},
  publisher={Taylor \& Francis}
}

@article{xu2025bringing,
  title={Bringing Closure to False Discovery Rate Control: A General Principle for Multiple Testing},
  author={Xu, Ziyu and Solari, Aldo and Fischer, Lasse and de Heide, Rianne and Ramdas, Aaditya and Goeman, Jelle},
  journal={arXiv preprint arXiv:2509.02517},
  year={2025}
}

@article{hemerik2024choosing, title={Choosing alpha post hoc: the danger of multiple standard significance thresholds}, author={Hemerik, Jesse and Koning, Nick W}, journal={arXiv preprint arXiv:2410.02306}, year={2024} }

@article{gauthier2025values,
  title={E-Values Expand the Scope of Conformal Prediction},
  author={Gauthier, Etienne and Bach, Francis and Jordan, Michael I},
  journal={arXiv preprint arXiv:2503.13050},
  year={2025}
}

@article{wang2024only,
  title={The only admissible way of merging arbitrary e-values},
  author={Wang, Ruodu},
  journal={Biometrika},
  volume={112},
  number={2},
  pages={0--20},
  year={2025},
  publisher={Oxford University Press}
}

@inproceedings{chau2025credal,
  title={Credal Two-Sample Tests of Epistemic Uncertainty},
  author={Chau, Siu Lun and Schrab, Antonin and Gretton, Arthur and Sejdinovic, Dino and Muandet, Krikamol},
  booktitle={International Conference on Artificial Intelligence and Statistics},
  pages={127--135},
  year={2025},
  organization={PMLR}
}

@article{caprio2025joys,
  title={The joys of categorical conformal prediction},
  author={Caprio, Michele},
  journal={arXiv preprint arXiv:2507.04441},
  year={2025}
}

@article{koning2024continuous,
  title={Continuous testing: Unifying tests and e-values},
  author={Koning, Nick W},
  journal={arXiv preprint arXiv:2409.05654},
  year={2024}
}

@article{katsevich2020simultaneous, title={Simultaneous high-probability bounds on the false discovery proportion in structured, regression and online settings}, author={Katsevich, Eugene and Ramdas, Aaditya}, journal={The Annals of Statistics}, volume={48}, number={6}, pages={3465--3487}, year={2020}, publisher={JSTOR} }

@article{xu2024post,
  title={Post-selection inference for e-value based confidence intervals},
  author={Xu, Ziyu and Wang, Ruodu and Ramdas, Aaditya},
  journal={Electronic Journal of Statistics},
  volume={18},
  number={1},
  pages={2292--2338},
  year={2024},
  publisher={The Institute of Mathematical Statistics and the Bernoulli Society}
}

@article{vovk2020combining,
  title={Combining p-values via averaging},
  author={Vovk, Vladimir and Wang, Ruodu},
  journal={Biometrika},
  volume={107},
  number={4},
  pages={791--808},
  year={2020},
  publisher={Oxford University Press}
}

@incollection{wald1950statistical,
  title={Statistical decision functions},
  author={Wald, Abraham},
  booktitle={Breakthroughs in Statistics: Foundations and Basic Theory},
  pages={342--357},
  year={1950},
  publisher={Springer}
}

@book{blackwell1954theory,
  title={Theory of games and statistical decisions},
  author={Blackwell, David and Girshick, MA},
  year={1954},
  publisher={Wiley}
}

@article{ramdas2020admissible,
  title={Admissible anytime-valid sequential inference must rely on nonnegative martingales},
  author={Ramdas, Aaditya and Ruf, Johannes and Larsson, Martin and Koolen, Wouter},
  journal={arXiv preprint arXiv:2009.03167},
  year={2020}
}

@article{dar1994misuse,
  title={Misuse of statistical tests in three decades of psychotherapy research.},
  author={Dar, Reuven and Serlin, Ronald C and Omer, Haim},
  journal={Journal of Consulting and Clinical Psychology},
  volume={62},
  number={1},
  pages={75},
  year={1994},
  publisher={American Psychological Association}
}

@article{bakan1966test,
  title={The test of significance in psychological research.},
  author={Bakan, David},
  journal={Psychological bulletin},
  volume={66},
  number={6},
  pages={423},
  year={1966},
  publisher={American Psychological Association}
}

@article{radhakrishna1945information,
  title={Information and accuracy attainable in the estimation of statistical parameters},
  author={Rao, C Radhakrishna},
  journal={Bulletin of the Calcutta Mathematical Society},
  volume={37},
  number={3},
  pages={81--91},
  year={1945},
  publisher={Calcutta Mathematical Society}
}

@book{royall2017statistical,
  title={Statistical evidence: a likelihood paradigm},
  author={Royall, Richard},
  year={2017},
  publisher={Routledge}
}

@book{wasserman2004all,
  title={All of statistics: a concise course in statistical inference},
  author={Wasserman, Larry},
  year={2004},
  publisher={Springer Science \& Business Media}
}

@article{blackwell1947conditional,
  title={Conditional expectation and unbiased sequential estimation},
  author={Blackwell, David},
  journal={The Annals of Mathematical Statistics},
  pages={105--110},
  year={1947},
  publisher={JSTOR}
}

@book{savage1954foundations, title={The foundations of statistics.}, author={Savage, Leonard J}, year={1954}, publisher={John Wiley \& Sons} }

@inproceedings{de1937prevision,
  title={La pr{\'e}vision: ses lois logiques, ses sources subjectives},
  author={De Finetti, Bruno},
  booktitle={Annales de l'institut Henri Poincar{\'e}},
  volume={7},
  pages={1--68},
  year={1937}
}

@article{shafer2021testing,
  title={Testing by betting: A strategy for statistical and scientific communication},
  author={Shafer, Glenn},
  journal={Journal of the Royal Statistical Society Series A: Statistics in Society},
  volume={184},
  number={2},
  pages={407--431},
  year={2021},
  publisher={Oxford University Press}
}

@article{wald1939contributions,
  title={Contributions to the theory of statistical estimation and testing hypotheses},
  author={Wald, Abraham},
  journal={The Annals of Mathematical Statistics},
  volume={10},
  number={4},
  pages={299--326},
  year={1939},
  publisher={JSTOR}
}

@article{gigerenzer2004mindless,
  title={Mindless statistics},
  author={Gigerenzer, Gerd},
  journal={The Journal of Socio-Economics},
  volume={33},
  number={5},
  pages={587--606},
  year={2004},
  publisher={Elsevier}
}

@book{lehmann1998theory,
  title={Theory of point estimation},
  author={Lehmann, Erich Leo and Casella, George},
  year={1998},
  publisher={Springer}
}

@book{shao2008mathematical,
  title={Mathematical statistics},
  author={Shao, Jun},
  year={2008},
  publisher={Springer Science \& Business Media}
}

@inproceedings{fisher1925theory,
  title={Theory of statistical estimation},
  author={Fisher, Ronald },
  booktitle={Mathematical proceedings of the Cambridge philosophical society},
  volume={22},
  pages={700--725},
  year={1925},
  organization={Cambridge University Press}
}

@article{koning2023markov,
  title={Post-hoc $\alpha$ Hypothesis Testing and the Post-hoc $p$-value},
  author={Koning, Nick W},
  journal={arXiv preprint arXiv:2312.08040},
  year={2023}
}

@article{wang2022false,
  title={False discovery rate control with e-values},
  author={Wang, Ruodu and Ramdas, Aaditya},
  journal={Journal of the Royal Statistical Society Series B: Statistical Methodology},
  volume={84},
  number={3},
  pages={822--852},
  year={2022},
  publisher={Oxford University Press}
}

@article{fisher1935logic,
  title={The logic of inductive inference},
  author={Fisher, Ronald},
  journal={Journal of the Royal Statistical Society},
  volume={98},
  number={1},
  pages={39--82},
  year={1935},
  publisher={JSTOR}
}

@article{vovk2021values,
  title={E-values: Calibration, combination and applications},
  author={Vovk, Vladimir and Wang, Ruodu},
  journal={The Annals of Statistics},
  volume={49},
  number={3},
  pages={1736--1754},
  year={2021},
  publisher={Institute of Mathematical Statistics}
}

@article{neyman1933ix,
  title={On the problem of the most efficient tests of statistical hypotheses},
  author={Neyman, Jerzy and Pearson, Egon },
  journal={Philosophical Transactions of the Royal Society of London. Series A, Containing Papers of a Mathematical or Physical Character},
  volume={231},
  number={694-706},
  pages={289--337},
  year={1933},
  publisher={The Royal Society London}
}

@article{neyman1928use,
  title={On the use and interpretation of certain test criteria for purposes of statistical inference part I},
  author={Neyman, Jerzy and Pearson, Egon },
  journal={Biometrika},
  volume={20},
  number={1-2},
  pages={175--240},
  year={1928},
  publisher={Oxford University Press}
}

@article{grunwald2024safe,
  title={Safe testing},
  author={Gr{\"u}nwald, Peter and de Heide, Rianne and Koolen, Wouter},
  journal={Journal of the Royal Statistical Society. Series B: Statistical Methodology},
  volume={86},
  number={5},
  pages={1091--1128},
  year={2024},
  publisher={Wiley-Blackwell}
}

@article{ramdas2023game,
  title={Game-theoretic statistics and safe anytime-valid inference},
  author={Ramdas, Aaditya and Gr{\"u}nwald, Peter and Vovk, Vladimir and Shafer, Glenn},
  journal={Statistical Science},
  volume={38},
  number={4},
  pages={576--601},
  year={2023},
  publisher={Institute of Mathematical Statistics}
}

\newpage
\appendix

\section{Auxiliary results and omitted proofs}
\label{sec:proofs}

\subsection{Technical Lemmas}

\begin{lemma}
\label{lem:QA>QB}
    Suppose there exist sets $A_1$, $A_2$ such that  $\sup_{X_1\in A_1}\lr(X_1)<\inf_{X_2\in A_2}\lr(X_2)$. Then both of the following hold: (a)
    if $P(A_1)\leq P(A_2)$ then $Q(A_1)<Q(A_2)$, and (b) $Q(A_1)/Q(A_2)< P(A_1)/P(A_2)$. 
\end{lemma}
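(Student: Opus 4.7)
The plan is to translate both statements into inequalities about the integral representation $Q(A) = \int_A \lr\, dP$, exploiting the fact that $\lr$ is uniformly bounded above on $A_1$ by something strictly smaller than its uniform lower bound on $A_2$.

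Concretely, set $c \equiv \sup_{X \in A_1} \lr(X)$ and $d \equiv \inf_{X \in A_2} \lr(X)$, so $0 \leq c < d$ by hypothesis (and since $\lr \geq 0$, we have $d > 0$). Since $Q \ll P$ and $\lr = \mathrm{d}Q/\mathrm{d}P$, the sandwich
\begin{equation*}
    Q(A_1) = \int_{A_1} \lr \, \mathrm{d}P \leq c \, P(A_1), \qquad Q(A_2) = \int_{A_2} \lr \, \mathrm{d}P \geq d \, P(A_2),
\end{equation*}
will drive everything. (We may implicitly assume $P(A_2) > 0$, since otherwise both $P(A_1) = 0 = P(A_2)$ and both $Q$-measures vanish, making both claims vacuous in the strict-inequality sense; we should likewise assume $P(A_1) > 0$ for part (b), as otherwise the strict inequality $0 < 0$ would fail.)

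For part (a), I would chain
\begin{equation*}
    Q(A_1) \leq c \, P(A_1) \leq c \, P(A_2) < d \, P(A_2) \leq Q(A_2),
\end{equation*}
where the middle strict inequality uses $c < d$ together with $P(A_2) > 0$. For part (b), dividing the two bounds gives
\begin{equation*}
    \frac{Q(A_1)}{Q(A_2)} \leq \frac{c \, P(A_1)}{d \, P(A_2)} < \frac{P(A_1)}{P(A_2)},
\end{equation*}
where the strict inequality again follows from $c/d < 1$ (noting $d > 0$ is needed to divide and $P(A_1)>0$ is needed for strictness on the right).

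The only genuine subtlety is the handling of edge cases where one of the $P(A_i)$ vanishes; the main argument itself is just two applications of the bound $c \leq \lr \leq d$ on the respective sets, combined with monotonicity of the Lebesgue integral. I do not expect any real obstacle — the lemma is essentially a bookkeeping exercise separating the supports of $\lr$ on $A_1$ and $A_2$.
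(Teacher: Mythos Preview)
Your proposal is correct and follows essentially the same argument as the paper: both define the supremum of $\lr$ on $A_1$ and the infimum on $A_2$, bound $Q(A_i)$ via $Q(A_i)=\int_{A_i}\lr\,\mathrm{d}P$, and chain the resulting inequalities. You are slightly more careful than the paper about the edge cases $P(A_i)=0$ needed for the strict inequalities, but otherwise the proofs are identical.
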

\begin{proof}
    Define  
    \[\overline{c} = \sup_{X\in A_1} \frac{\d\Q}{\d\P}(X), \text{~ and ~} \underline{c} = \inf_{X\in A_2} \frac{\d\Q}{\d\P}(X). \] 
    Observe that 
    \begin{equation*}
        \Q(A_1) = \int_{A_1} \d\Q(x) = \int_{A_1} \lr(x) \d\P(x)  \leq \overline{c} \int_{A_1} \d\P(x) = \overline{c}\P(A_1).
    \end{equation*}
    Similarly, 
    \begin{equation*}
        Q(A_2) = \int_{A_2} \lr(x) \d\P \geq \underline{c} \int_{A_2} \d P =\underline{c} P(A_2). 
    \end{equation*}
    To prove (b), use the above two displays and note that $\overline{c}<\underline{c}$. Then,  by assumption, we have 
    \[\frac{Q(A_1)}{Q(A_2)}\leq \frac{\overline{c}P(A_1)}{Q(A_2)} \leq \frac{\overline{c}P(A_1)}{\underline{c}P(A_2)} < \frac{P(A_1)}{P(A_2)}.\] 
    As for (a), suppose that $P(A_1)\leq P(A_2)$. Then, 
    \begin{equation*}
    \label{eq:pf-increasing-lr-Q}
        \Q(A_1) \leq \overline{c} \P(A_1) \leq  \overline{c}\P(A_2) \leq \underline{c} \P(A_2) \leq \Q(A_2), 
    \end{equation*}
    as desired. 
\end{proof}

\begin{lemma}
    \label{lem:test-evalue-inc}
    Let $\delta$ be $\Gamma$-admissible for any $\Gamma\supset \consta$. Then $\delta(\cdot,b)$ is increasing in the likelihood ratio for each $b\in\calB$ if and only if $E_\delta$ is increasing in the likelihood ratio. The same statement holds if $\delta=\vp$ is a binary test family. 
\end{lemma}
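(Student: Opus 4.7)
The key observation is that both directions follow almost immediately from the canonical representations provided by Lemmas~\ref{lem:representation} and~\ref{lem:representation-binary}. The plan is to use these representations to express each side of the equivalence in a form where monotonicity in $\lr$ transfers directly.

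For the direction ``$E_\delta$ increasing in $\lr$ implies $\delta(\cdot,b)$ increasing in $\lr$ for each $b$,'' I would invoke Lemma~\ref{lem:representation} (or Lemma~\ref{lem:representation-binary} in the binary case) to write, $P$-almost surely, $\delta(X,b) = \min\{1, E_\delta(X)/L_b(0,1)\}$ (resp.\ $\vp(X,b) = \ind{E_\vp(X) \geq L_b(0,1)}$). Writing $E_\delta = h \circ \lr$ for some nondecreasing $h$ (the definition of ``increasing function of $\lr$'' used in the paper), the map $x \mapsto \min\{1, h(x)/L_b(0,1)\}$ is nondecreasing for each fixed $b$, so $\delta(\cdot,b)$ is an increasing function of $\lr$. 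Similarly, $x\mapsto \ind{h(x) \geq L_b(0,1)}$ is nondecreasing for each $b$ in the binary case.

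For the converse, suppose that for each $b \in \calB$ there exists a nondecreasing $g_b$ such that $\delta(X,b) = g_b(\lr(X))$ $P$-almost surely. Define $h(t) \equiv \sup_{b \in \calB} L_b(0,1)\, g_b(t)$. Then $h$ is a supremum of nondecreasing functions and hence nondecreasing, and for $P$-almost every $X$ we have
\begin{equation*}
    E_\delta(X) = \sup_{b \in \calB} L_b(0,1)\, \delta(X,b) = \sup_{b \in \calB} L_b(0,1)\, g_b(\lr(X)) = h(\lr(X)),
\end{equation*}
so $E_\delta$ is an increasing function of $\lr$. The argument is identical in the binary case with $\vp$ in place of $\delta$, since $E_\vp(X) = \sup_{b} L_b(0,1)\,\vp(X,b)$ by definition.

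There is no real obstacle here; the only minor point is handling the ``$P$-almost surely'' qualifier carefully, since the canonical representations of Lemmas~\ref{lem:representation} and~\ref{lem:representation-binary} hold only almost surely, and the supremum over an uncountable set $\calB$ must be interpreted in light of Assumption~\ref{asmp:Borel} (which guarantees measurability of the relevant suprema, as used in Lemma~\ref{lem:risk-as-adversary}). Once this is noted, the proof is essentially a one-line application of the canonical representation in each direction.
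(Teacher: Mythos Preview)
Your forward direction is identical to the paper's: both invoke the canonical representation (Lemma~\ref{lem:representation} or~\ref{lem:representation-binary}) and note that $\min\{1,\cdot/L_b(0,1)\}$ (resp.\ $\ind{\cdot\geq L_b(0,1)}$) is nondecreasing.

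For the converse you take a slightly different and more direct route. The paper argues by contrapositive, again through the canonical form: assuming $\lr(X_1)\leq\lr(X_2)$ but $E_\delta(X_1)>E_\delta(X_2)$, it picks $b$ with $L_b(0,1)\geq E_\delta(X_1)$ so that $\min\{L_b(0,1),E_\delta(X_1)\}=E_\delta(X_1)>E_\delta(X_2)=\min\{L_b(0,1),E_\delta(X_2)\}$, whence $\delta(X_1,b)>\delta(X_2,b)$. You instead work straight from the definition $E_\delta(X)=\sup_{b} L_b(0,1)\,\delta(X,b)$ and observe that a pointwise supremum of nondecreasing functions of $\lr$ is nondecreasing. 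This avoids invoking the canonical representation (and hence $\Gamma$-admissibility) for the converse direction and sidesteps the paper's small step of exhibiting a suitable $b$. Both arguments are short and correct; yours is arguably the cleaner of the two for that direction, and the measure-theoretic caveat you flag about the common null set for uncountable $\calB$ applies equally to the paper's version.
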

\begin{proof}
    Suppose $\lr(X_1)\leq \lr(X_2)$. If $E_\delta(X_1)\leq E_\delta(X_2)$ then $\min\{1, E_\delta(X_1)/L_b(0,1)\} \leq \min\{1, E_\delta(X_2)/L_b(0,1)\}$ for each $b$, so applying Lemma~\ref{lem:representation} demonstrates that $\delta(X_1,b)\leq \delta(X_2,b)$. Conversely, suppose that $E_\delta(X_1) > E_\delta(X_2)$. Consider any $b$ such that $L_b(0,1) \geq E_\delta(X_1)$ (this must exist, otherwise $E_\delta(X_1) > \sup_b L_b(0,1)$, which is impossible by definition). For such $b$, $ \min\{L_b(0,1), E_\delta(X_1)\} = E_\delta(X_1) > E_\delta(X_2) = \min\{ L_b(0,1), E_\delta(X_2)\}$, implying that $\phi(X_1,b) > \phi(X_2,b)$. This proves the result in the randomized case. The argument in the binary case is proved similarly. 
\end{proof}

\begin{lemma}
\label{lem:E>=Lb}
Let $P,Q$ be continuous and let $\delta$ be $\Gamma_\const$-admissible. 
Let $Y\subset\calX$ satisfy $Q(Y)>0$ and $E_\delta(X) >0$ for all $X\in Y$. 
Then there exists an adversary $B^*:\calX\to\calB$ such that $E_\delta(X) \geq L_{B^*(X)}(0,1)$ for all $X\in Y$ $Q$-almost surely. 
\end{lemma}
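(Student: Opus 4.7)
The plan is to leverage the compatibility of $E_\delta$ with $\calL$. Since $\delta$ is $\consta$-admissible and $P,Q$ are continuous, Lemma~\ref{lem:compatible-inc} gives $E_\delta(X) \in \calL \cup \{0\}$ $P$-a.s., and because $P$ and $Q$ share a common support under our standing assumption (Observation~\ref{eq:Q<<P}), this also holds $Q$-a.s. On $Y$, where $E_\delta(X) > 0$ by hypothesis, this forces $E_\delta(X) \in \calL$ for $Q$-almost every $X \in Y$, so for such $X$ there exists at least one $b \in \calB$ with $L_b(0,1) = E_\delta(X)$. The problem thus reduces to picking one such $b$ measurably in $X$.

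The first construction I would try uses the generalized inverse. Let $h(b) = L_b(0,1)$, which is measurable (Assumption~\ref{asmp:Borel}) and nondecreasing by the standing convention on the losses. Its left-continuous generalized inverse $h^-(y) = \inf\{b \in \calB : h(b) \geq y\}$ is then Borel measurable, so $B^*(X) = h^-(E_\delta(X))$ (extended arbitrarily off $Y$) is a measurable map. For correctness on $Y$, compatibility supplies some $b_X \in \calB$ with $L_{b_X}(0,1) = E_\delta(X)$; then $b_X$ lies in the set defining $h^-$, so $B^*(X) \leq b_X$, and monotonicity of $h$ gives $L_{B^*(X)}(0,1) \leq L_{b_X}(0,1) = E_\delta(X)$, which is the desired inequality.

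The main obstacle is the measure-theoretic subtlety that $B^*(X)$ must lie in $\calB$: a priori the infimum could land in $\overline{\calB} \setminus \calB$, and $h$ need not be left-continuous. To sidestep this cleanly, without imposing topological regularity on $\calB$ or $\calL$, I would fall back on a measurable selection theorem (e.g., Kuratowski--Ryll-Nardzewski, applicable because $(\calX,\Sigma_\calX)$ and $(\calB,\Sigma_\calB)$ are standard Borel by Assumption~\ref{asmp:Borel}) applied to the multifunction $\Psi(X) = \{b \in \calB : L_b(0,1) = E_\delta(X)\}$. Its graph $\{(X,b) \in \calX \times \calB : L_b(0,1) - E_\delta(X) = 0\}$ is Borel, and we have just shown that $\Psi(X)$ is nonempty for $Q$-a.e.\ $X \in Y$, so a measurable selector $B^*$ exists. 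Any such selector automatically satisfies the stronger identity $L_{B^*(X)}(0,1) = E_\delta(X)$ on $Y$, completing the argument after arbitrary extension off the relevant null sets.
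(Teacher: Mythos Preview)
Your argument is circular. You invoke Lemma~\ref{lem:compatible-inc} to obtain compatibility of $E_\delta$ with $\calL$, but the paper's proof of Lemma~\ref{lem:compatible-inc} (in Appendix~\ref{sec:proof-compatible-inc}) opens precisely by invoking the present Lemma~\ref{lem:E>=Lb}: it uses the conclusion here to rule out the case $E_\delta(X) < \inf_{b\in\calB} L_b(0,1)$ before handling the remaining ``gap between two losses'' case. So Lemma~\ref{lem:E>=Lb} is logically prior to Lemma~\ref{lem:compatible-inc} in the paper's development, and you cannot use the latter to establish the former.

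The paper instead proves the lemma directly by a perturbation argument. Assuming the conclusion fails yields a set $A\subset Y$ of positive $Q$-measure on which $0 < E_\delta(X) < \inf_b L_b(0,1)$, hence (by Lemma~\ref{lem:representation}) $\delta(X,b) = E_\delta(X)/L_b(0,1) \in (0,1)$ for \emph{every} $b\in\calB$ and $X\in A$. One then splits $A$ into two equal-$P$-measure halves $A_1, A_2$ with $\lr$ strictly smaller on $A_1$ than on $A_2$, subtracts $\eps/L_b(0,1)$ from $\delta$ on $A_1$ and adds it on $A_2$, uniformly in $b$. This leaves type-I risk unchanged since $P(A_1)=P(A_2)$, but strictly improves type-II risk on every constant loss since $Q(A_2) > Q(A_1)$ by Lemma~\ref{lem:QA>QB}, contradicting $\consta$-admissibility. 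No compatibility is needed; indeed, this lemma is the tool the paper uses to \emph{obtain} compatibility.
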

\begin{proof}
Suppose not. Then 
\[
\sup_{X\in Y}E_\delta(X) <\inf_{b\in \calB} L_b(0,1).
\]
Since $E_\delta>0$ on $Y$, we can find some $\gamma>0$ and $A\subset Y$ with $Q(A)>0$ such that
\[
\gamma\leq \inf_{X\in A} E_\delta(X). 
\]
Let $A = A_1\cup A_2$ where $P(A_1) = P(A_2)$ and $\lr(X_1)<\lr(X_2)$ for all $X_1\in A_1$ and $X_2\in A_2$. (This is possible by continuity.) 
Define a new decision rule $\deltahat$ such that, for all $b\in\calB$, $\deltahat(X,b)= \delta(X,b) - \eps/ L_b(0,1)$ for all $X\in A_1$ and $\deltahat(X,b) = \delta(X,b) + \eps/ L_b(0,1)$ for all $X\in A_2$. For $X\notin A$, $\deltahat$ is the same as $\delta$. Note that by Lemma~\ref{lem:representation}, $
\delta(X, b) = E_\delta(X) / L_b(0,1) \in (0,1)$ for $X\in A$ and $b\in\calB$. Therefore, 
\[
\deltahat(X, b) = \frac{E_\delta(X) - \eps}{L_b(0,1)} \;\text{ for }X\in A_1 \text{~ and ~} \deltahat(X, b) = \frac{E_\delta(X) + \eps}{L_b(0,1)}\;\text{ for } X\in A_2.
\]
Therefore, we may find $\eps$ small enough such that $0<\deltahat(X,b)\leq 1$ for all $X\in A$. Indeed, to ensure that $\deltahat(X,b) > 0$ we need to ensure that $\eps < \gamma$ and for $\deltahat(X,b) \leq 1$ we need $\eps < \inf_{b\in \calB} L_b(0,1) - \sup_{X\in A} E_\delta(X)$, where the final quantity is greater than zero by assumption. Fix any such $\eps>0$. 

To see that $\deltahat$ is type-I risk safe, write 
\begin{align*}
    \int_{A} E_{\deltahat}(x) \d\P &= \int_{A_1}\sup_b L_b(0,1)\left(\delta(x,b) - \frac{\eps}{L_b(0,1)}\right)\d\P \\
    &\qquad + \int_{A_1}\sup_b L_b(0,1)\left(\delta(x,b) + \frac{\eps}{L_b(0,1)}\right)\d\P \\ 
    &= \int_{A_1} E_\delta(x) \d\P - \eps \P(A_1) + \int_{A_2} E_\delta(x) \d\P + \eps \P(A_2)  \\ 
    &= \int_{A} \sup_b  L_b(0,1)\delta(x,b) \d\P,
\end{align*}
since $P(A_1) = P(A_2)$ by assumption. From this it follows that $\risk_P(\deltahat) \leq 1$ since 
\[
\E_P[E_{\deltahat}(X)] = \int_{A}E_{\deltahat}(x) \d\P + \int_{\calX\setminus A}E_{\delta}(x) \d\P = \int_{\calX} E_\delta(x) \d\P \leq 1.
\]
For type-II risk, fix any $b\in\calB$ and notice that 
\begin{align*}
    \int_{A}   \deltahat(x,b) \d\Q &= \int_{A_1\cup A_2}  \delta(x,b) \d Q- \frac{\eps}{L_b(0,1)} Q(A_1) + \frac{\eps}{L_b(0,1)} Q(A_2).
\end{align*}
Lemma~\ref{lem:QA>QB} implies that $Q(A_2)>Q(A_1)$ by our choice of $A_1$ and $A_2$. Therefore $-\eps Q(A_1) + \eps Q(A_2) > 0$ and we conclude 
    \begin{align*}
        \int_{A} \deltahat(x,b)\d\Q > \int_{A}  \delta(x,b)\d Q, 
    \end{align*}
implying that $\E_\Q [ L_B(1, \delta(X,B))]< \E_\Q[ L_B(1, \deltahat(X,B))]$. Hence $\delta$ is $\Gamma_\all$-inadmissible. 
\end{proof}

Recall that a decision curve $t_\phi$ \emph{$Q$-strictly dominates} $t_\vp$ if $t_\phi(b) \leq t_\vp(b)$ for all $b\in\calB$ and there exists some $b^*$ such that $t_\phi(b^*) < t_\vp(b^*)$ and $Q(t_\phi(b^*) \leq \lr(X) < t_\vp(b^*)) > 0$.

\begin{lemma}
\label{lem:admissibility_by_t}
Let $P$ and $Q$ be continuous and let $\vp$ be a type-I risk safe binary test family. Then $\vp$ is $\consta$-inadmissible if and only if there exists some binary type-I risk safe $\psi$ such that $t_\psi$ $Q$-strictly dominates $t_\vp$. 
\end{lemma}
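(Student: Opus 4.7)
The plan is to handle the two directions of the equivalence separately.

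For sufficiency ($\Leftarrow$), if $t_\psi(b) \leq t_\vp(b)$ for all $b$, then $\psi(X,b) = \ind{\lr(X) \geq t_\psi(b)} \geq \ind{\lr(X) \geq t_\vp(b)} = \vp(X,b)$ pointwise, so $\E_Q[L_b(1,\psi(X,b))] \leq \E_Q[L_b(1,\vp(X,b))]$ for every $b$. At the distinguishing $b^*$, the event $\{t_\psi(b^*) \leq \lr < t_\vp(b^*)\}$ has positive $Q$-measure and $\psi > \vp$ there, so the constant adversary $B \equiv b^*$ witnesses strict preference and $\vp$ is $\consta$-inadmissible.

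For necessity ($\Rightarrow$), let $\phi$ be a binary type-I risk safe test strictly preferable to $\vp$ over $\consta$, so $\E_Q[\phi(X,b)] \geq \E_Q[\vp(X,b)]$ for all $b$, strict at some $b^*$. I will convert $\phi$ into a test with an increasing decision curve via two reductions. First, let $E_\phi(X) = \sup_b L_b(0,1)\phi(X,b)$; the canonical binary test $\vp_{E_\phi}(X,b) = \ind{E_\phi(X) \geq L_b(0,1)}$ dominates $\phi$ pointwise (since $\phi(X,b)=1$ forces $L_b(0,1) \leq E_\phi(X)$), has associated e-value bounded by $E_\phi$, and is therefore type-I risk safe and at least as $Q$-preferable as $\phi$. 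Second, set $S(X) = \E_P[E_\phi(X) \mid \sigma(\lr(X))]$, which is an e-variable by iterated expectation; Proposition~\ref{prop:rao-blackwell} (binary version, with $\consta \subset \Gamma_{\sigma(\lr)}$) shows that $\vp_S$ is weakly preferable to $\vp_{E_\phi}$ over $\consta$. Chaining inequalities, $\E_Q[\vp_S(X,b)] \geq \E_Q[\vp(X,b)]$ for all $b$, strict at $b^*$.

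To produce the required increasing decision curve, write $S = g(\lr)$ and let $g^*(r) = F_S^{-1}(F_\lr(r))$ denote the increasing rearrangement of $g$ with respect to the $P$-law of $\lr$, where $F_S$ and $F_\lr$ are the $P$-CDFs of $S$ and $\lr$. Then $S^* := g^*(\lr)$ has the same $P$-distribution as $S$, so $\E_P[S^*] \leq 1$ and $S^*$ is an e-variable. Define $\psi = \vp_{S^*}$: then $\psi(X,b) = \ind{\lr(X) \geq (g^*)^-(L_b(0,1))}$, so $t_\psi(b) = (g^*)^-(L_b(0,1))$ is increasing in $b$, and $\psi$ is type-I risk safe since $E_\psi \leq S^*$. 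For any threshold $t$, the set $\{\lr \geq (g^*)^-(t)\} = \{g^*(\lr)\geq t\}$ is the $P$-upper set of $\lr$ of the same $P$-mass as $\{g(\lr) \geq t\}$; a one-line calculation (splitting the difference of indicators and bounding $\lr$ by $(g^*)^-(t)$ on each half) yields $Q(S^* \geq t) = \E_P[\lr\ind{\lr \geq (g^*)^-(t)}] \geq \E_P[\lr\ind{g(\lr) \geq t}] = Q(S \geq t)$. Taking $t = L_b(0,1)$ gives $\E_Q[\psi(X,b)] \geq \E_Q[\vp_S(X,b)] \geq \E_Q[\vp(X,b)]$, strict at $b^*$, and continuity of $P$ and $Q$ then converts the strict inequality at $b^*$ into $t_\psi(b^*) < t_\vp(b^*)$ with positive $Q$-mass in the gap, yielding $Q$-strict domination.

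The principal obstacle is the monotone-rearrangement step: verifying measurability and well-definedness of $g^*$ when $F_S$ or $F_\lr$ have flat pieces or atoms, and justifying the rearrangement inequality. Continuity of $P$ and $Q$ ensures that $\lr$ has a continuous $P$-distribution on its support, so $g^*$ is determined up to $P$-null modifications and the inequality goes through as sketched; a secondary subtlety is the implicit assumption that $\vp$ itself admits a decision curve $t_\vp$, which is guaranteed by Corollary~\ref{cor:decision-curve} in the cases where the result has content.
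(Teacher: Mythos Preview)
Your sufficiency direction is correct and matches the paper's. For necessity, you take a more careful route than the paper, which simply invokes the decision-curve representation~\eqref{eq:decision-curve} for the dominating $\psi$ without justification (Corollary~\ref{cor:decision-curve} only supplies that form for $\consta$-admissible tests). Manufacturing a $\psi$ of decision-curve form from an arbitrary dominating $\phi$ is the right idea, but step~2 contains a genuine error: the binary case of Proposition~\ref{prop:rao-blackwell} that you invoke is false, because $E\mapsto \ind{E\ge c}$ is not concave (take $E_1=0$, $E_2=2c$, $\lambda=1/4$). Concretely, let $P$ be uniform on $[0,1]$, $\lr=\tfrac12$ on $[0,\tfrac23]$ and $\lr=2$ on $(\tfrac23,1]$, $\calL=\{2,4\}$, and $\phi(X,1)=\ind{\lr=2}$, $\phi(X,2)=\ind{X\in A}$ for some $A\subset\{\lr=2\}$ with $P(A)=\tfrac16$. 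Then $E_\phi\in\{0,2,4\}$ with $P$-probabilities $\tfrac23,\tfrac16,\tfrac16$, so $S=\E_P[E_\phi\mid\lr]=3\,\ind{\lr=2}$ and $\vp_S(\cdot,2)=\ind{S\ge 4}\equiv 0$, which is strictly worse than $\vp_{E_\phi}(\cdot,2)=\ind{X\in A}$. Rao--Blackwellization therefore does not preserve even weak preferability for binary tests, and your chain of inequalities breaks at this step.

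The natural repair is to drop step~2 and apply the rearrangement of step~3 directly to $E_\phi$; your Neyman--Pearson argument then goes through cleanly \emph{provided} $\lr$ has an atomless $P$-law, so that $F_\lr(\lr)$ is uniform and $g^*(\lr)$ inherits the $P$-law of $E_\phi$. Your assertion that continuity of $P$ and $Q$ guarantees this is incorrect: $\lr$ can be piecewise constant even when both measures are atomless. Worse, in the same example no type-I risk safe $\psi$ of decision-curve form $Q$-strictly dominates the test $\vp$ given by $t_\vp(1)=2$, $t_\vp(2)=\infty$ (every candidate lowering of $t_\psi$ forces the risk to at least $4/3$), yet this $\vp$ is $\consta$-inadmissible via the $\phi$ above. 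So without an additional hypothesis such as atomlessness of the law of $\lr$, the necessity direction fails outright, and neither your construction nor the paper's shortcut can be completed as stated.
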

\begin{proof}
   Let $\psi$ be a binary test family which is type-I risk safe and satisfies $t_\psi(b) \leq t_\vp(b)$ for all $b\in\calB$ and $t_\psi(b^*) < t_\vp(b^*)$ where $Q(t_{\psi}(b^*) < \lr(X) < t_\vp(b^*))>0$. Then for all $b\in\calB$, 
\[\vp(X,b) = \ind{\lr(X) \geq t_\vp(b)} \leq \ind{\lr(X) \geq t_\psi(b)} = \psi(X,b),\]
and the inequality is strict for $b^*$ on a set of positive measure under $Q$. Apply Lemma~\ref{lem:phi>delta} to see that $\vp$ is $\consta$-inadmissible. Conversely, suppose that $\vp$ is $\consta$-inadmissible, so there exists some $\psi$ satisfying $\E_Q[\psi(X,b)] \geq \E_Q[\vp(X,b)]$ for all $b$, with a strict inequality for at least one $b^*$. Using the representation~\eqref{eq:decision-curve} of both test families, this implies that 
\[Q(\lr(X) \geq t_\vp(b)) \leq Q(\lr(X) \geq t_\psi(b)),\]
for all $b$, so $t_\vp(b) \geq t_\psi(b)$. For $b^*$, $0< \E_Q[\ind{\lr(X) \geq t_\psi(b^*)} - \ind{\lr(X) \geq t_\vp(b^*)}] = Q(t_\psi(b^*) \leq \lr(X) < t_\vp(b^*))$, completing the proof. 
\end{proof}

\begin{lemma}
\label{lem:minimal_decision_curve}
    Let $P$ and $Q$ be continuous and let $\vp$ be a type-I risk safe binary test family. 
    If $\vp$ is $\consta$-admissible then it has a minimal decision curve. 
\end{lemma}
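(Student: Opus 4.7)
The plan is to proceed by contradiction: I assume $\vp$ is $\consta$-admissible, so that by Corollary~\ref{cor:decision-curve} it admits the representation $\vp(X,b) = \ind{\lr(X) \geq t_\vp(b)}$, but fails the minimal decision curve condition at some $b^*\in\calB$. Unpacking the failure, I obtain $\sup_{b<b^*}L_b(0,1) = L_{b^*}(0,1)$ together with $s := \sup_{b<b^*} t_\vp(b) < t^* := t_\vp(b^*)$ and $P(A)>0$ on $A := \{X : s \leq \lr(X) < t^*\}$; common support of $P$ and $Q$ then gives $Q(A)>0$. My target is to construct a binary type-I risk safe test $\psi$ with $\psi \geq \vp$ pointwise and $\psi(\cdot,b^*) > \vp(\cdot,b^*)$ throughout $A$; Lemma~\ref{lem:phi>delta} (applied with the constant adversary that sends every $X$ to $b^*$) will then contradict the assumed $\consta$-admissibility.

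The key observation to establish is that on $A$, $\vp$ already incurs the full type-I loss $L_{b^*}(0,1)$ without being rewarded by a rejection at $b^*$. Indeed, for every $b<b^*$ and $X\in A$, monotonicity of $t_\vp$ (Lemma~\ref{lem:decreasing_in_b}) gives $\lr(X) \geq s \geq t_\vp(b)$, so $\vp(X,b) = 1$; hence on $A$, $E_\vp(X) \geq \sup_{b<b^*} L_b(0,1)\vp(X,b) = \sup_{b<b^*} L_b(0,1) = L_{b^*}(0,1)$, whereas $\vp(X,b^*) = \ind{\lr(X) \geq t^*} = 0$ by definition of $A$. I would then define the competitor $\psi$ to agree with $\vp$ off the slice $\{b^*\}\times A$ and to equal $1$ on that slice, so that $\psi \geq \vp$ pointwise with strict inequality at $b^*$ on $A$.

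Finally, I would verify that $\psi$ remains type-I risk safe. Flipping $\psi(X,b^*)$ from $0$ to $1$ on $A$ adds at most $L_{b^*}(0,1)$ to $E_\psi(X)$ there, but by the key observation $E_\vp(X)$ already attains at least $L_{b^*}(0,1)$ on $A$; hence $E_\psi = E_\vp$ pointwise $P$-almost surely, and $\risk_P(\psi) = \risk_P(\vp) \leq 1$. An application of Lemma~\ref{lem:phi>delta} then forces $\vp$ to be $\consta$-inadmissible, the desired contradiction. The main obstacle I anticipate is the careful accounting of the supremum at the boundary $s$, namely producing the \emph{equality} $\sup_{b<b^*} L_b(0,1)\vp(X,b) = L_{b^*}(0,1)$ on $A$ (rather than merely a lower bound), which is exactly what the failure-of-minimality hypothesis $\sup_{b<b^*} L_b(0,1) = L_{b^*}(0,1)$ supplies. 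Joint measurability of $\psi$ is routine, since $A\times\{b^*\}$ is a Borel rectangle in $\calX\times\calB$.
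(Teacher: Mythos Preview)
Your proof is correct and follows essentially the same approach as the paper's: both construct a competitor that raises $\vp$ to $1$ at $b^*$ on the region $A$ where $\vp$ already incurs type-I cost $L_{b^*}(0,1)$ without rejecting, and verify that $E_\psi = E_\vp$ so that type-I risk is preserved. The only superficial difference is that the paper phrases the competitor via a modified decision curve $\ell$ and concludes with Lemma~\ref{lem:admissibility_by_t}, whereas you apply Lemma~\ref{lem:phi>delta} directly.
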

\begin{proof}
    We prove the contrapositive. Suppose that $\vp$ does have not have a minimal decision curve, so there exists some $b^*$
    such that $\sup_{b<b^*}t_\delta(b) < t_\delta(b^*)$ and $P(\sup_{b<b^*}t_\delta(b) \leq \lr(X) < t_\delta(b^*)) > 0$. 
    Define a new functions $\ell:\calB\to \Re_{\geq 0}\cup\{\infty\}$ as 
    \begin{equation*}
        \ell:b\mapsto \begin{cases}
            t_\delta(b),& b\neq b^*,\\
            \sup_{b<b^*}t_\delta(b),& b=b^*. 
        \end{cases}
    \end{equation*}
    Observe that $\ell$ $Q$-strictly dominates $t_\delta$. We claim that it also defines a type-I risk safe test family, which will show that $\delta$ is $\Gamma_\const$-inadmissible by Lemma~\ref{lem:admissibility_by_t}. 
    Note that since $t_\delta$ is increasing in $b$ by Corollary~\ref{cor:decision-curve}, so too is $\ell$.

    Partition $\calX$ into $C_1 = \{X : \lr(X) < \ell(b^*)\}$, $C_2 = G(b^*) = \{X: \ell(b^*)\leq \lr(X) < t_\delta(b^*)\}$, $C_3 = \{ X: \lr(X) \geq t_\delta(b^*)\}$. On $C_1$ and $C_3$, we have $\sup_b L_b(0,1)\ind{\lr(X) \geq t_\delta(b)} = \sup_bL_b(0,1)\ind{\lr(X) \geq \ell(b)}$ since, in both cases, the supremum does not involve $b^*$. Now, for $X\in C_2$, $\ind{\lr(X) \geq t_\delta(b)}=1$ if and only if $b<b^*$, so 
    \[\sup_bL_b(0,\vp(X,b)) =\sup_b L_b(0,1) \ind{\lr(X) \geq t_\delta(b)} = \sup_{b<b^*} L_b(0,1) = L_{b^*}(0,1).\]
    Moreover, again for $X\in C_2$, $\ind{\lr(X) \geq \ell(b)}=1$ if and only if $b\leq b^*$, giving that $\sup_b L_b(0,1) \ind{\lr(X) \geq \ell(b)} = \sup_{b\leq b^*}L_b(0,1) = L_{b^*}(0,1)$. Therefore, 
    \begin{equation*}
        \int_{C_2} \sup_b L_b(0,1)\ind{\lr(x) \geq \ell(b)}\d\P = \int_{C_2} \sup_b L_b(0,\vp(x,b))\d\P,
    \end{equation*}
    which, combined with equality on $C_1$ and $C_3$, implies that the risk of the test family defined by $\ell$ is equal to $\risk(\vp)$, completing the argument. 
\end{proof}

\subsection{Proof of Proposition~\ref{prop:np-decision-rules}}
\label{sec:proof-np-decision-rules}
First let us show that $\delta$ is type-I risk safe. Since $\delta(X,b) = 0$ on $\{\lr(X)<\kappa(b^*)\}$, 
\begin{align*}
    \risk_P(\delta) &= \int_{\{\lr(x) \geq  \kappa(b^*)\}} \sup_b L_b(0,\delta(x,b)) \d\P + \int_{\{\lr(x) < \kappa(b*)\}} \sup_b L_b(0,\delta(x,b)) \d\P \\ 
    &= \int_{\{\lr(x) > \kappa(b^*)\}} \sup_b L_b(0,1) \frac{L_{b^*}(0,1)}{L_b(0,1)} \d\P + \int_{\{\lr(x) = \kappa(b^*)\}} \sup_b L_b(0,1) \frac{L_{b^*}(0,1)}{L_b(0,1)} \gamma\d\P \\ 
    &= \int_{\{\lr(x) > \kappa(b^*)\}} L_{b^*}(0,1) \d\P + \int_{\{\lr(x) = \kappa(b^*)\}}  \gamma L_{b^*}(0,1) \d\P \\ 
    &= L_{b^*}(0,1)\E_\P[\nptest(X,b^*)] = 1. 
\end{align*}
Now we show it is admissible. 
Suppose there is a type-I risk safe test family $\phi$ (distinct from $\phi^\np$) which is weakly preferable to $\delta$ (with respect to $\Gamma$). 
That is, $\E_\Q [L_B(1,\delta(X,b))]\leq \E_\Q [L_B(1,\phi(X,b))]$ for all $B\in\Gamma$. If we consider the constant map $B(X) = b^*$, which is in $\Gamma$ by assumption, 
we obtain
$\E_\Q[\nptest(X,b^*)] \leq \E_\Q[\phi(X,b^*)]$. We claim this implies that $\phi(\cdot,b^*) = \nptest(\cdot,b^*)$ $P$-almost-surely. Indeed, this follows from the uniqueness in the Neyman-Pearson lemma, but let us prove it directly since it may not be a priori obvious that the same logic transfers to the post-hoc setting. Consider the integral 
\begin{equation*}
    I = \int_\calX (\underbrace{\nptest(x,b^*) - \phi(x,b^*)}_{:=t_1})(\underbrace{\d\Q - \kappa(b^*)\d\P}_{:=t_2}).
\end{equation*}
We claim that $I = 0$. We begin by showing that the integrand is nonnegative. This is clear if $\nptest(x,b^*) = \phi(x,b^*)$. If $\nptest(x,b^*)=1$ then $Q(x)/P(x) > \kappa(b^*)$ so both $t_1$ and $t_2$ are nonnegative. If $\nptest(x,b^*)=0$ then $Q(x)/P(x) < \kappa(b^*)$ so both terms are nonpositive. If $0<\nptest(x,b^*)<1$ then $Q(x) = \kappa P(x)$ so $t_1$ and $t_2$ multiply to zero. This shows that $I\geq 0$. Hence, 
\begin{align*}
    \int_\calX (\nptest(x,b^*) - \phi(x,b^*)) \d\Q & \geq \kappa(b^*) \int_\calX (\nptest(x,b^*) - \phi(x,b^*))\d\P \\ 
    &= \kappa(b^*)(L_{b^*}^{-1}(0,1) - \E_P[\phi(X,b^*)]. 
\end{align*}
Now, we must have $\E_P[\phi(X,b^*)] \leq L_{b^*}(0,1)$, otherwise $\risk_P(\phi) \geq \E_P[L_{b^*}(0,1)\phi(X,b^*)] >1$, contradicting that $\phi$ is type-I risk safe. The last term in the above display is thus nonnegative, and we have that 
\begin{equation*}
    \E_\Q[\nptest(X,b^*)] - \E_\Q[\phi(X,b^*)] = \int_\calX (\nptest(x,b^*) - \phi(x,b^*))\d\Q \geq 0. 
\end{equation*}
Since we know from above that $\E_\Q[\nptest(X,b^*)]\leq \E_\Q[\phi(X,b^*)]$, we have shown that $\E_\Q[\nptest(X,b^*)]=\E_\Q[\phi(X,b^*)]$. Finally, we can rewrite $I$ as $I = -\kappa(b^*) (L_{b^*}(0,1) - \E_\P[\phi(X,b^*)])$ which we now see must equal 0, since $I\geq 0$ but we argued above that $\E_\P[\phi(X,b^*)] \leq L_{b^*}(0,1)$. Since $I=0$ with a positive integrand, the integrand must be 0 almost surely. This implies that $\nptest(X,b^*) = \phi(X,b^*)$ except on the set $\{x: Q(x) = \kappa(b^*)P(x)\}$. If this set has positive measure under $Q$ (as it does in the discrete case) and $\nptest(X,b^*)\neq \phi(X,b^*)$, then one has higher power than the other, implying that $\E_\Q[\nptest(X,b^*)] \neq \E_\Q[\phi(X,b^*)]$, a contradiction. We conclude that $\phi(X,b^*) = \nptest(X,b^*)$ $P$-almost surely. 

Returning to the main proof, write 
\[E_\phi(X) = \sup_b L_b(0,\phi(X,b)) \geq L_{b^*}(0,\phi(X,b^*)) = L_{b^*}(0,\nptest(X,b^*)).\]
Hence $\E_P[E_\phi(X)]\geq \E_P[L_{b^*}(0,\nptest(X,b^*))] =1$ and we may conclude that 
\[E_\phi(X) = L_{b^*}(0,\nptest(X,b^*)),\] 
since $\phi$ is assumed to be type-I risk safe. By Lemma~\ref{lem:representation} we can write 
\begin{equation}
\label{eq:pf-np-decision-1}
    \phi(X,b) = \min\left\{1, \frac{E_\phi(X)}{L_b(0,1)}\right\} = \min\left\{1, \frac{L_{b^*}(0,1)}{L_b(0,1)}\nptest(X,b^*)\right\},
\end{equation}
as desired. Now we prove the second half of the proposition. Suppose $\delta$ is $\Gamma$-admissible and satisfies $\delta(X,b^*) = \nptest(X,b^*)$ for some $b^*$. We are tasked with showing that $\delta = \widehat{\phi}$ $P$-almost surely, where $\widehat{\phi}$ is defined as the right hand side of~\eqref{eq:pf-np-decision-1}. First we show that $\delta$ cannot be greater than $\widehat{\phi}$. Suppose that $\delta(X,b_0) > \widehat{\phi}(X,b_0)$ for all $X\in A\subset\calX$ and some $b_0\in\calB$ where $P(A)>0$. We will show that $\delta$ is not type-I risk safe. Observe that since $\widehat{\phi}(X,b_0)<1$ for $X\in A$ (since $1\geq \delta(X,b_0) > \widehat{\phi}(X,b_0)$), we have $\widehat{\phi}(X,b_0) = L_{b^*}(0,1) \nptest(X,b^*) / L_{b_0}(0,1)$. Consider an adversary $B$ defined as $B(X) = b^*$ for $X\in \calX\setminus A$ and $B(X) = b_0$ for $X\in A$. Then 
\begin{align*}
    \risk_P(\delta) &\geq \int_A L_{B(x)}(0,\delta(x,B(x))\d\P + \int_{\calX\setminus A} L_{B(x)}(0,\delta(x,B(x))\d\P \\ 
    & > \int_A L_{b_0}(0,1) \widehat{\phi}(X,b_0) \d\P + \int_{\calX\setminus \calA} L_{b^*}(0,1)\delta(x,b^*)\d\P \\ 
    &= \int_A L_{b^*}(0,1) \nptest(X,b^*)\d\P + \int_{\calX\setminus A} L_{b^*}(0,1) \nptest(X,b^*) \d\P \\ 
    &= L_{b^*}(0,1)\E_P[\nptest(X,b^*)] = 1,
\end{align*}
so $\delta$ is not type-I risk safe. We conclude that we must have $\delta(X,b) \leq \widehat{\phi}(X,b)$ $P$-almost surely for all $b\in\calB$. Now suppose that for some $b_0$, $\delta(X,b_0) < \widehat{\phi}(X,b_0)$ for all $X\in A$ where again $P(A)>0$. Consider defining a new test family $\deltahat$ such that $\deltahat = \delta$, except on $A$ and $b_0$ in which case we set $\deltahat(X,b_0) = \widehat{\phi}(X,b_0)$. Since we've already shown that $\delta \leq \widehat{\phi}$ $P$-almost surely, it follows that $\risk_P(\deltahat) \leq \risk_P(\delta)\leq 1$, so $\deltahat$ is type-I risk safe. Moreover, it has greater type-II risk than $\delta$. To see this, consider any adversary $B$ and let $A_0\subset A$ be the (possibly empty) set of $X$ such that $B(X) = b_0$. Then  
\begin{align*}
    \E_\Q[L_B(1,0) \deltahat(X,B)] &= \int_{A_0} L_{b_0}(1,0) \deltahat(x,b_0)\d\Q + \int_{\calX\setminus A_0} L_{B(x)}(1,0) \delta(x,B(x))\d\Q \\ 
    &\geq \int_{A_0} L_{b_0}(1,0) \deltahat(x,b_0)\d\Q + \int_{\calX\setminus A_0} L_{B(x)}(1,0) \delta(x,B(x))\d\Q \\ 
    &= \E_\Q[L_B(1,0) \delta(X,B))],
\end{align*}
hence $\E_\Q[L_B(1,\deltahat(X,B))] \leq \E_\Q[L_B(1,\delta(X,B))]$. 
If $A_0$ has positive measure under $Q$ then the inequality above becomes a strictly inequality, showing that $\deltahat$ sometimes has strictly lower type-II risk than $\delta$ (such an adversary exists in $\Gamma$ by assumption) making it strictly preferable to $\delta$ with respect to $\Gamma$, completing the proof.  

\subsection{Proof of Proposition~\ref{prop:binary-np-rules}}
\label{sec:proof-binary-np-rules}
The proof has similar mechanics to that of Proposition~\ref{prop:binary-np-rules}. 
    First note that $\vp$ is indeed type-I risk safe. By definition, $\vp(X,b) =0 $ whenever $\lr(X) < \kappa(b^*)$ and $\vp(X,b)$ iff $b\leq b^*$ otherwise. Therefore, 
    \begin{align*}
        \risk_P(\delta) &= \int_{\lr \geq \kappa(b^*)} \sup_bL_b(0,\vp(X,b)) \d\P + \int_{\lr(X) < \kappa(b^*)} \sup_b L_b(0,0)\d\P \\ 
        &= \int_{\lr \geq \kappa(b^*)} L_{b^*}(0,1)  \d\P = L_{b^*}(0,1) P(\lr(X) \geq \kappa(b^*) = 1,
    \end{align*}
    by definition of $\kappa(b^*)$. Now suppose that $\psi$ is any other type-I risk safe binary test family that is weakly preferable to $\vp$ with respect to any $\Gamma$. For the adversary $B(X) = b^*$ we have $\E_\Q[\psi(X,b^*)] \geq \E_\Q[\vp(X,b^*)] = \E_\Q[\nptest(X,b^*)]$. As in Appendix~\ref{sec:proof-np-decision-rules}, this implies that $\psi(X,b^*) = \nptest(X,b^*)$ $Q$-almost surely, which in turn implies that $E_{\psi}(X)= \sup_b L_b(0,\psi(X,b)) = L_{b^*}(0,1)\nptest(X,b^*)$ $P$-almost surely. Therefore, by the canonical representation given in Lemma~\ref{lem:representation-binary}, we have 
    \begin{equation}
        \psi(X,b) = \ind{E_{\psi}(X) \geq L_b(0,1)} = \ind{L_{b^*}(0,1) \nptest(X,b^*) \geq L_b(0,1)},
    \end{equation}
    $P$-almost surely, which is precisely $\vp(X,b)$. Next, suppose that $\vp$ is $\Gamma$-admissible and there exists some $b^*$ such that $\vp(X,b^*) = \nptest(X,b^*)$. Then we cannot have $\vp(X,b) > 0$ for any $b>b^*$ on any set $A$ of positive measure under $P$,   otherwise an adversary can ensure the type-I risk is higher than 1 by playing $b$ on $A$ and $b^*$ everywhere else. (We omit the precise calculation because it is similar to the proof in Appendix~\ref{sec:proof-np-decision-rules}). Similarly, we cannot have $\delta(X,b) < \nptest(X,b^*)$ for $b\leq b^*$ and $\lr(X) \geq \kappa(b^*)$ otherwise $\vp$ is $\Gamma$-inadmissible by Lemma~\ref{lem:phi>delta}. This completes the proof.

\subsection{Proof of Lemma~\ref{lem:phi>delta}}
\label{proof:lem-phi>delta}
By assumption we may assume that anything with positive probability under $P$ has positive probability under $Q$ and vice versa.   
For any $b\in\calB$, $L_b(1,\phi(X,b)) = L_b(1,0)(1 - \phi(X,b)) \leq L_b(1,0)(1 - \delta(X,b)) = L_b(1,\delta(X,b))$.
Therefore, for any $B\in\Gamma$, 
\[\E_\Q [L_B(1,\phi(X,B)] \leq \E_\Q [L_B(1,\delta(X,B))].\] 
Suppose $A\subset \calX$ is such that  $\phi(X,b^*) > \delta(X,b^*)$ for all $X\in A$ and $Q(A)>0$. 
Then 
$\phi$ has strictly lower type-II risk on the map $B(X) = b^*$ since
\[\E_Q [ L_{b^*}(1, \phi(X,b^*)\ind{X \in A} ] < \E_\Q[ L_{b^*}(1,\delta(X,b^*)\ind{ X \in A}],\] making $\phi$ strictly preferable to $\delta$ with respect to any $\Gamma$ ($B\in\Gamma$ by assumption). 

\subsection{Proof of Lemma~\ref{lem:decreasing_in_b}}
\label{proof:lem-decreasing_in_b}
Let $\delta$ be any test family, binary or otherwise, 
such that $\delta(X,b_1) < \delta(X,b_2)$ for some $b_1<b_2$ and all $X$ belonging to some set $A$ of positive measure on $Q$. Define a new test family $\deltahat$ such that $\deltahat(X,b_1) = \delta(X,b_2)$ for all $X\in A$. Otherwise $\deltahat$ is the same as $\delta$. Since $L_{b_1}(0,1)\deltahat(X,b_1) = L_{b_1}(0,1)\delta(X,b_2) \leq L_{b_2}(0,1)\delta(X,b_2)$, it follows that $\sup_b L_b(0,1) \deltahat(X,b) = \sup_b L_b(0,1)\delta(X,b)$. This implies that $\deltahat$ is type-I risk safe and we conclude that $\delta$ is $\Gamma$-inadmissible by Lemma~\ref{lem:phi>delta}.

\subsection{Proof of Lemma~\ref{lem:Ephi>Edelta}}
\label{sec:proof-Ephi>Edelta}
    If $E_\delta(X) < E_\phi(X)$ then there exists some $b$ such that $\delta(X,b) < \phi(X,b)$. If not, then $L_{b_0}(0,1)\phi(X,b_0) \leq L_{b_0}(0,1)\delta(X,b_0) \leq \sup_b L_b(0,1) \delta(X,b) = E_\delta(X)$ for all $b_0$, so taking the supremum over $b_0$ gives $E_\phi(X) \leq E_\delta(X)$, a contradiction. Now apply Lemma~\ref{lem:phi>delta} to see that $\delta$ is $\Gamma$-inadmissible for any $\Gamma$. As for the second part of the result, suppose that $\E_P[E_\delta]<1$. We will define a new test family $\deltahat$ such that $E_{\deltahat} > E_\delta$ 
    in some region $A$ while remaining type-I risk safe. First note that there exists some region $A\subset\calX$ with $P(A)>0$ and some $b_0$ such that $\delta(X,b_0)$ for all $X\in A$. If not, then $\delta(X,b)=1$ for all $b\in\calB$ $P$-almost surely. Since $\delta$ is type-I risk safe, we have  $\E_P[\sup_b L_b(0,\delta(X,b))] = \E_P[\sup_b L_b(0,1)] \leq 1$, hence $L_b(0,1) \leq 1$ for all $b$, contradicting our assumption on the losses. For $X\in A$, set 
    \begin{equation*}
        \deltahat(X,b_0) = \delta(X,b_0) + \frac{\eps}{L_{b_0}(0,1)},
    \end{equation*}
    for some $\eps>0$ to be determined. For $X\not\in A$ or any $b\neq b_0$, let $\deltahat(X,b) = \delta(X,b)$. 
    In order to ensure that $\deltahat(X,b_0) \leq 1$, we require that 
    \begin{equation}
    \label{eq:pf-Ephi-1}
        \frac{\eps}{L_{b_0}(0,1)} \leq 1 - \delta(X,b_0),
    \end{equation}
    for all $X\in A$. We can choose $\eps>0$ small enough such that~\eqref{eq:pf-Ephi-1} is satisfied since $\delta(X,b_0)<1$. 
    If $\deltahat$ is type-I risk safe then it is strongly preferable to $\delta$ for any $\Gamma$ by Lemma~\ref{lem:phi>delta}, thus rendering $\delta$ $\Gamma$-inadmissible. To see that $\eps$ can be chosen to make $\deltahat$ type-I risk safe, first write 
    \begin{align*}
        \int_A \sup_b L_b(0,\deltahat(x,b))\d\P &\leq \int_A \sup_b L_b(0,1)\left(\delta(x,b) + \frac{\eps}{L_b(0,1)}\right)\d\P \\
        &= \int_A \sup_bL_b(0,\delta(x,b))\d\P + \eps P(A).
    \end{align*}
    Therefore, 
    \begin{align*}
        \risk_P(\deltahat) &= \int_A \sup_bL_b(0,\deltahat(x,b))\d\P + \int_{\calX\setminus A} \sup_b L_b(0,\delta(x,b)) \d\P \\ 
        &\leq \int_\calX \sup_b L_b(0,\delta(x,b)) \d\P + \eps P(A)  
        = \risk_P(\delta) + \eps P(A).
    \end{align*}
    Since $\risk_P(\delta) = \E_P[E_\delta]<1$ by assumption, choosing any 
    \begin{equation}
    \label{eq:pf-Ephi-2}
        0<\eps \leq \frac{1 - \E_P[E_\delta]}{P(A)},
    \end{equation}
    ensures that $\deltahat$ remains type-I risk safe. Thus, choosing any $\eps$ that satisfies both~\eqref{eq:pf-Ephi-2} and~\eqref{eq:pf-Ephi-1} completes the proof when the test families are allowed to be randomized. Now let us consider the case of binary test families. Let $\vp$ be such that $\E_P[E_{\vp}]<1$. As before, this implies the existence of some $A\subset\calX$ and $b_0$ such that $\vp(X,b_0)=0$. Otherwise $\vp$ rejects with probability 1 everywhere and on all losses, implying that $L_b(0,1)\leq 1$ for all $b$. We define $\vphat$ which acts as $\delta$ everywhere except that $\vphat(X,b_0) = 1 > 0 = \vp(X,b_0)$ for all $X$ in some $A_0\subset A$ with $Q(A_0)>0$.  In particular, we choose $A_0$ to satisfy 
    \begin{equation}
    \label{eq:pf-Ephi-3}
        P(A_0) = \frac{ 1 - \E_P[E_\delta]}{L_{b_0}(0,1)},
    \end{equation}
    which is possible by continuity of $P$. Next, notice that by Lemma~\ref{lem:decreasing_in_b}, $\delta(X,b) = 0$ for all $b\geq b_0$ and $X\in A_0$, otherwise $\delta$ is already inadmissible. Therefore, $\sup_b L_b(0,\vphat(X,b)) = L_{b_0}(0,1)$ in $A_0$. 
    This gives
    \begin{align*}
        \risk_P(\deltahat) &= \int_{\calX\setminus A_0} \sup_b L_b(0,\vp(x,b)) \d\P + \int_{A_0} \sup_b L_b(0,\vphat(x,b))\d\P \\ 
        &= \int_{\calX\setminus A_0} \sup_b L_b(0,\vp(x,b)) \d\P + L_{b_0}(0,1)P(A_0) 
        \\
        &\leq \risk_P(\delta) + 1 - \E_P[E_\delta] =1,
    \end{align*}
    which proves that $\vphat$ is type-I risk safe. To complete the proof, apply Lemma~\ref{lem:phi>delta} to see that $\deltahat$ is strictly preferable to $\delta$ with respect to any $\Gamma$.

\subsection{Proof of Lemma~\ref{lem:representation}}
\label{proof:lem-representation}
Suppose there exists some $A$ satisfying $P(A),Q(A)>0$ such that for some $b\in\calB$ and all $X\in A$, $\delta(X,b) > \min\{1,E_\delta(X)/ L_b(0,1)\}$. That is, 
    \begin{equation}
    \label{eq:pf-constant-sup-1}
      L_b(0,1)\delta(X,b) > \min\{ L_b(0,1),E_\delta(X)\}, \text{~ for all } X\in A.  
    \end{equation}
    Observe that we must have $\min\{ L_b(0,1),E_\delta(X)\} = E_\delta(X)$. Otherwise \eqref{eq:pf-constant-sup-1} implies that $ L_b(0,1) \delta(X,b) >  L_b(0,1)$, which is impossible since 
    $\delta(X,b)\leq 1$ by definition. Therefore the minimum is achieved by $E_\delta(X)$ and we have $ L_b(0,1)\delta(X,b) > E_\delta(X)$, contradicting the definition of $E_\delta$. Thus no such $b$ can exist, and we conclude that the left hand side of~\eqref{eq:canonical} is at most the right hand side. 

    Next, suppose there exists some $b$ with $\delta(X,b) < \min\{1, E_\delta(x)/ L_b(0,1)\}$ for all $X\in A$, where again $P(A),Q(A)>0$. Consider a new decision family $\deltahat$ defined as $\deltahat(X,b) = \min\{1, E_\delta(X)/ L_b(0,1)\}$ for all $X\in A$ and which  acts as $\delta$ everywhere else. Note that for $X\in A$,  $ L_b(0,1)\deltahat(X,b) = \min\{ L_b(0,1), E_\delta(X)\}\leq E_\delta(X)$, hence $E_{\deltahat}(X) = \sup_b  L_b(0,1)\deltahat(X,b) = E_\delta(X)$ (using that $\deltahat(\cdot,c) = \delta(\cdot,c)$ for all $c\neq b$). Therefore, 
    \begin{equation}
        \risk_\P(\deltahat)=   \int_{\calX\setminus A} E_\delta(x)\d P + \int_A E_{\deltahat}(x)\d P= \risk_\P(\delta) \leq 1, 
    \end{equation}
    so $\deltahat$ remains type-I risk safe. Moreover, it strictly improves type-II risk by Lemma~\ref{lem:phi>delta} ($\deltahat(\cdot,b)> \delta(\cdot,b)$ on a set of positive measure under $\Q$), implying that $\delta$ is $\Gamma$-inadmissible.

\subsection{Proof of Lemma~\ref{lem:E<supb}}
\label{sec:proof-E<supb}
    Suppose there exists a set $A\subset\calX$ such that $E(X) > \sup_bL_b(0,1)$ for all $X\in A$ and $P(A)>0$. We will define a new e-variable $F$ for $P$ such that $F(X) \leq \sup_bL_b(0,1)$ for all $X\in A$ and $\delta_F$ is strictly preferable to $\delta$ with respect to any $\Gamma$. First we observe that there exists some $Y\subset\calX$ such that $E(X) < \sup_b L_b(0,1)$ for all $X\in Y$ and $P(Y)>0$. If not, then $E(X) \geq \sup_b L_b(0,1)$ $P$-almost surely, implying by definition (see~\eqref{eq:delta-from-e}) that $\delta_E$ rejects with probability 1 $P$-almost surely, contradicting our assumption that $L_b(0,1)>1$ for some $b$. For some fixed $\eps>0$ to be specified later, define $F$ as follows: 
    \begin{equation}
        F(X) = \begin{cases}
            \sup_b L_b(0,1), & \text{if } X\in A,\\
            E(X) + \eps,&\text{if } X\in Y, \\
            E(X),&\text{otherwise}.
        \end{cases}
    \end{equation}
    First notice that on $Y$ there exists some $b$ such that $\delta_F(X,b) = \min\{1, E_(X)/L_b(0,1)\} > E(X)/L_b(0,1) = \delta_E(X)$. Moreover, on $A$, $\delta_F(X) = 1 = \delta_E(X,b)$ for all $b$ since $E(X) > F(X) =\sup_bL_b(0,1)$. Finally, everywhere else we have $\delta_E(X,b) = \delta_F(X,b)$ for all $b$ since $F(X) = E(X)$. Therefore, it follows from Lemma~\ref{lem:phi>delta} that $\delta_E$ is $\Gamma$-inadmissible if $\delta_F$ is type-I risk safe. Compute 
    \begin{align*}
        \risk_P(\delta_F) &= \int_A F \d\P + \int_Y F \d\P + \int_{\calX\setminus( A\cup Y)} E \d\P \\ &= \int_A (\sup_bL_b(0,1) - E + E)\d\P + \int_E F \d\P + \eps P(Y) + \int_{\calX\setminus( A\cup Y)} E \d\P \\ 
        &= \int_\calX E \d\P + \int_A (\sup_b L_b(0,1) - E)\d\P + \eps P(Y).
    \end{align*}
    Taking 
    \begin{equation}
        \eps = \frac{1}{P(Y)}\int_A (\sup_bL_b(0,1) - E)\d\P,
    \end{equation}
    thus suffices to ensure that $\risk_P(\delta_F) = \risk_P(\delta_E)\leq 1$. We have thus proven that if $P(E(X)>\sup_b L_b(0,1))>0$ then $\delta$ is $\Gamma$-inadmissible. Then, if $P(E(X)>\sup_b L_b(0,1)) = 0$, $E_{\delta_E}=E$ $P$-almost surely by~\eqref{eq:EdeltaE_vs_E}.

\subsection{Proof of Lemma~\ref{lem:representation-binary}}
\label{proof:lem-representation-binary}
    The proof is similar to that of Lemma~\ref{lem:representation}. First note that we can't have $\vp(X,b) = 1$ if $L_b(0,1) > E_{\vp}(X)$ by definition of $E_{\vp}$. Therefore, $\vp(X,b) \leq  \ind{ L_b(0,1) \leq E_{\vp}(X)}$. Conversely, suppose that, for some $b^*$, $\vp(X,b^*) =0$ when $L_{b^*}(0,1) < E_{\vp}(X)$ for all $X\in A$ where $P(A), Q(A)>0$. Define a new rule $\vphat$ such that $\vphat(X,b^*) = 1$ for $X\in A$ and otherwise $\vphat$ is the same as $\vp$. Since $E_{\vp}= E_{\vphat}$, $\vphat$ is  type-I risk safe. Now apply Lemma~\ref{lem:phi>delta} with $\phi= \vphat$ to see that $\vp$ is $\Gamma$-inadmissible.

\subsection{Proof of Lemma~\ref{lem:compatible}}
\label{sec:proof-compatible}
    We prove the contrapositive. Suppose that $E$ is not compatible with $\calL_0$. Then there exists some set $Y\subset\calX$ with $P(Y)>0$ such that, for all $X\in Y$, either 
    \begin{enumerate}
        \item $L_{b_1}(0,1) < E(X) < L_{b_2}(0,1)$ for some $b_1<b_2$ such that there is no $b\in\calB$ with $b_1<b<b_2$, or 
        \item $E(X) > \sup_b L_b(0,1)$. Of course, in this case $\sup_bL_b(0,1)<\infty$. 
    \end{enumerate}
    We consider these two cases separately. First suppose that (1) holds. For some \emph{disjoint} subsets $Y_0,Y_1\subset Y$ to be determined, we define a new random variable as follows: 
    \begin{equation}
        F(X) = \begin{cases}
         L_{b_1}(0,1),&\text{if } X\in Y_0,\\ 
         L_{b_2}(0,1),&\text{if } X\in Y_1, \\ 
         E(X),&\text{otherwise}.
    \end{cases}   
    \end{equation}
    Notice that for all $X\in Y_0$ and all $b\in\calB$, $\vp_F(X,b) = \vp_E(X,b)$; the fact that $E(X) > L_{b_1}(0,1)$ makes no difference since it is strictly smaller than the next loss, $L_{b_2}(0,1)$ and thus cannot influence the value of $\vp_E$. Meanwhile, $\vp_F(X,b_2)  =1 > 0 = \vp_E(X,b_2)$ for $X\in Y_1$, and $\vp_F(X,b) = \vp_E(X,b)$ for all other $b$. For $X\notin Y_0\cup Y_1$, clearly $\vp_E(X,b) = \vp_F(X,b)$. Therefore, as long as $Y_1$ has positive measure under $P$ and $\vp_E$ is type-I risk safe, then $\vp_E$ is $\Gamma$-inadmissible by Lemma~\ref{lem:phi>delta}. Let us compute 
    \begin{align*}
        \risk_P(\delta_F) &= \int_{Y_0} F\d\P + \int_{Y_1}F \d\P + \int_{\calX \setminus (Y_0\cup Y_1)} F\d\P \\ 
        &= \int_{Y_0} (F-E+E)\d\P + \int_{Y_1}(F-E+E) \d\P + \int_{\calX \setminus (Y_0\cup Y_1)} E\d\P \\ 
        &= \int_\calX E \d\P + \int_{Y_0} (L_{b_1}(0,1) - E)\d\P + \int_{Y_1} (L_{b_2}(0,1) - E)\d\P \\ 
        &\leq 1 + \int_{Y_0}(L_{b_1}(0,1) - E)\d\P + (L_{b_2}(0,1) - L_{b_1}(0,1))P(Y_1).
    \end{align*}
    Thus, for a fixed $Y_0$ with positive measure, taking $Y_1$ small enough such that 
    \begin{equation}
    0<P(Y_1) = (L_{b_2}(0,1) - L_{b_1}(0,1))^{-1} \int_{Y_0} (E - L_{b_1}(0,1))\d\P, 
    \end{equation}
    ensures that $\risk_P(\delta_F)\leq 1$. Such choices of $Y_0$ and $Y_1$ are possible by continuity of $P$. This completes the proof of case (1). 

    Let us now consider case (2). Here the proof is similar to that of Lemma~\ref{lem:E<supb}, but it must be handled somewhat differently to account for binary tests. Since $E(X)>\sup_b L_b(0,1)$ on $Y$, there exists some $U$ such that $E(X) < \sup_b L_b(0,1)$ for all $X\in U$. The reasoning is the same as in previous proofs. 

    For some $U_0\subset U$, define  
    \begin{equation}
        F(X) = \begin{cases}
            \sup_b L_b(0,1),& \text{if } X \in Y \cup U_0, \\ 
            E(X),&\text{otherwise}.
        \end{cases}
    \end{equation}
    Following similar reasoning as above, $\vp_F(X,b)\geq \vp_E(X,b)$ with strict inequality for some $b$ when $x\in U_0$. We thus need only show that $\vp_F$ is type-I risk safe to complete the proof. Let $s = \sup_b L_b(0,1)$ and write 
    \begin{align*}
        \risk_P(\vp_F) &= \int_Y (s - E + E)\d\P + \int_{U_0} (s - E + E)\d\P + \int_{\calX\setminus(Y\cup U_0)} E\d\P \\ 
        &\leq  \int_\calX E\d\P + \int_Y (s - E)\d\P + \int_{U_0} s \d\P \\ 
        &\leq 1 + \int_Y (s - E)\d\P + sP(U_0). 
     \end{align*}
     Since $s<\infty$ and $P$ is continuous, we can choose $U_0$ such that $s<P(U_0) = \int_Y ( E - s) \d\P$, which gives $\risk_P(\vp_F)$, completing the argument.

\subsection{Proof of Theorem~\ref{thm:M-admissibility}}
\label{sec:proof-thm-M-admissibility} 
Lemma~\ref{lem:Ephi>Edelta} implies that if $E_\delta$ is not sharp then $\delta$ is $\Gamma_\all$-inadmissible. Meanwhile, Lemma~\ref{lem:representation} implies that if $\delta$ is not canonical then it is not $\Gamma_\all$-admissible. This proves one direction. 
To prove the other direction, suppose that $\delta$ is canonical and $E_\delta$ is sharp. Let $\phi$ be any other type-I risk safe test family. We may assume that $\phi$ is canonical and $E_\phi$ sharp, otherwise we may improve it and consider the resulting test family. 
  
  Define the following subsets of $\calX$. Let
\begin{enumerate}
    \item $R_1\subset\calX$ be the region where $E_\phi(X) = E_\delta(X)=0$,
    \item $R_2\subset\calX$ be the region where $0=E_\delta(X) < E_\phi(X)$,
    \item $R_3\subset \calX$ be the region where $0<E_\delta(X) \leq E_\phi(X)$,
    \item and $R_4\subset\calX$ be the region where $E_\delta(X) > E_\phi(X)$.
\end{enumerate}
Observe that $R_4$ cannot have $Q$-measure zero. Indeed, if so, then $E_\delta(X) \leq E_\phi(X)$ $Q$-almost surely. Since both $E_\delta$ and $E_\phi$ have the same expected value and both are nonnegative, this implies that $E_\phi(X) = E_\delta(X)$ $Q$-almost surely, implying by Lemma~\ref{lem:representation} that $\delta = \phi$ $Q$-almost surely.

It suffices to find a map $B$ such that $\E_\Q [L_B(1,\phi(X,B))] > \E_\Q [L_B(1,\delta(X,B))]$, thus demonstrating that $\phi$ cannot be strongly $\Gamma_\all$-preferable to $\delta$. To this end, write  
\begin{equation}
\E_\Q [L_B(1,\phi(X,B)) - L_B(1,\delta(X,B))] = \sum_{1\leq k\leq 4} \Delta_k, 
\end{equation}
where 
\begin{align*}
    \Delta_k &\equiv \int_{R_k} L_B(1,\phi(x,B)) - L_B(1,\delta(x,B)) \d\Q \\ 
    &= \int_{R_k} L_B(1,0)( \delta(x,B) - \phi(x,B)) \d Q.
\end{align*}
   Define the adversary $B$ as follows. 
   First consider $R_4$ which, as we argued above, has positive measure under $Q$. 
   For $X\in R_4$, we set $B(X)$ to be any $b$ such that $L_b(0,1)>E_\phi(X)$. Such a $b$ must exist otherwise, $E_\delta(X) > E_\phi(X) \geq \sup_b L_b(0,1)$, contradicting the definition of $E_\delta(X)$.  
   Therefore, on $R_4$, $\phi$ satisfies $\phi(X,B(X)) = E_\phi(X) / L_{B(X)}(0,1) < 1$, by Lemma~\ref{lem:representation}, so 
   \begin{equation*}
       \delta(X, B(X)) = \min\left\{1, \frac{E_\delta(X)}{L_{B(X)}(0,1)}\right\} > \frac{E_\phi(X)}{L_{B(X)}(0,1)} = \phi(X,B(X)), \quad Q\text{-almost surely}.
   \end{equation*}
   Therefore, 
   \begin{align*}
       \Delta_4 = \int_{R_4} L_{B}(1,0)\left(\frac{E_\delta(X)}{L_{B}(0,1)} - \frac{E_\phi(X)}{L_{B}(0,1)}\right)\d\Q > 0,
   \end{align*}
   Now let us choose how $B$ acts on the rest of $\calX$. For $X\in R_1$, the choice is arbitrary, since $\phi(X,b) = \delta(X,b) = 0$ so $\Delta_1=0$. For $R_2$, write 
   \begin{align*}
       \Delta_2 = -\int_{R_2} L_{B}(1,0) \min\left\{1, \frac{E_\phi(X)}{L_{B}(0,1)}\right\}\d\Q.
   \end{align*}
   For all $X\in R_2$, take $B(X) = b$ where $b$ is large enough such that 
   \begin{equation}
   \label{eq:pf-M-admissible-1}
     \frac{L_b(1,0)}{L_b(0,1)} \leq \frac{\Delta_4}{4 \int_{R_2} E_\phi(X)\d\Q} \text{~ and ~} L_b(0,1) \geq \sup_{X\in R_2} E_\phi(X).  
   \end{equation} 
   Such a $b$ must exist by condition~\eqref{eq:C1} and by definition of $E_\phi$. In this case, 
   \[\Delta_2 \geq - \int_{R_2} \frac{L_b(1,0)}{L_b(0,1)} E_\phi(X) \d\Q \geq - \frac{\Delta_4}{4}.\]
   For $R_3$ we make a similar argument. Write 
   \begin{equation*}
       \Delta_3 = - \int_{R_3} L_B(1,0)(\phi(x,B) - \delta(x,B))\d\Q \geq -\int_{R_3} L_B(1,0)\min\left\{1, \frac{E_\phi(X)}{L_B(0,1)}\right\}\d\Q.
   \end{equation*}
   Take $B(X) = b$ where $b$ defined as in~\eqref{eq:pf-M-admissible-1} but with $R_3$ in place of $R_2$. Then $\Delta_3 \geq -\Delta_4/4$. Therefore, overall we have $\sum_{k\leq 4} \Delta_k \geq -\Delta_4/4 - \Delta_4/4 + \Delta_4 >0$ since $\Delta_4>0$. This completes the first part of the argument. 

   For the second part, let $E$ be an e-variable for $P$. If $E\leq\sup_bL_b(0,1)$ $P$-almost surely, then $E_{\delta_E} = E$ $P$-almost surely by~\eqref{eq:EdeltaE_vs_E}. Thus if $E$ is sharp then $E_{\delta_E}$ is sharp, and we may apply the first part of the theorem to see that $\delta_E$ is $\Gamma_\all$-admissible. Conversely, if $\delta_E$ is $\Gamma_\all$-admissible then $E_{\delta_E}$ is sharp, again by applying the first part of the theorem. This in turn implies that $E$ is sharp, since $E_{\delta_E}(X) \leq E(X)$ by~\eqref{eq:EdeltaE_vs_E}.  
   Next we claim that $E = E_{\delta_E}$, which will complete the proof since $E_{\delta_E}(X)\leq \sup_bL_b(0,1)$ for all $X$ by construction. Let $W= \{X : E(X) \neq E_{\delta_E}(X)\}$ be the subset on which $E$ and $E_{\delta_E}$ disagree. By~\eqref{eq:EdeltaE_vs_E}, we know they disagree if and only if $E(X) > \sup_b L_b(0,1)$. Therefore, we can write 
   \[W = \{ X: E(X) > \sup_bL_b(0,1)\}.\]
   Since $E$ is sharp,
   \begin{align*}
       1 &= \E_P[E] = \E_P[E\ind{X\notin W}] + \E_P[E\ind{X\in W}] \\
       &= \E_P[E_{\delta_E}\ind{X\notin W}] + \E_P[E\ind{X\in W}] \\ 
       &> \E_P[E_{\delta_E}\ind{X\notin W}] + \sup_b L_b(0,1) P(W).
   \end{align*}
   Therefore, 
   \begin{align*}
       1 &= \E_P[E_{\delta_E}] = \E_P[E\ind{X\notin W}] + \E_P[E_{\delta_E}\ind{X\in W}] \\ 
       &< 1 - \sup_b L_b(0,1)P(W) \E_P[E_{\delta_E}\ind{X\in W}] \\ 
       &\leq 1 - \sup_b L_b(0,1)P(W) \sup_b L_b(0,1)P(W) = 1, 
   \end{align*}
   a contradiction. Hence we must have $P(W)=0$, meaning $E = E_{\delta_E}$ $P$-almost surely. This completes the proof.

\subsection{Proof of Theorem~\ref{thm:M-admissibility-binary}}
\label{sec:proof-M-admissibility-binary}
    The proof is similar to that of Theorem~\ref{thm:M-admissibility}, so we provide only a sketch. Lemma~\ref{lem:Ephi>Edelta} implies that $E_{\vp}$ must be sharp; Lemma~\ref{lem:representation-binary} implies that $\vp$ must be canonical. For the converse, we partition the sample space the same way as in the proof of Theorem~\ref{thm:M-admissibility}. On $R_4$ we have $E_\delta(X) > E_\phi(X)$. Because the losses are dense in $\Re\cap [M,\infty)$, we can find some $b$ such that $E_\phi(X) < L_b(0,1) \leq E_\delta(X)$. (Note that $E_\delta(X) \geq \inf b L_b(0,1)\geq M$ for binary $\delta$.) We take $B(X)$ to be this value of $b$. Then $\Delta_4 > 0$. For $R_1$, the choice of $B$ is again arbitrary. For $R_2$, choose $B(X)$ such that $L_{B(X)}(0,1) > E_\phi(X)$ which is possible $\sup_b L_b(0,1) = \infty$ ($E_\phi$ cannot be infinite on a set of positive measure under $P$ otherwise $\phi$ is not type-I risk safe). Then $\Delta_2 = 0$. A similar argument applies to $R_3$. This shows that $\E_Q[L_B(1,\phi(X,B))] - \E_Q[L_B(1,\delta(X,B))] = \sum_i \Delta_i >0$. 

    Now consider an e-variable $E$ for $P$. If $E$ is not sharp then neither is $E_{\delta_E}$ since $E_{\delta_E}(X) = \sup_b L_b(0,1)\ind{E(X) \geq L_b(0,1)} \leq E(X)$. Therefore $\delta_E$ is $\Gamma_\all$-inadmissible by the first part of the theorem. Further, if $E$ is not compatible then $\delta_E$ is $\Gamma_\all$-inadmissible by Lemma~\ref{lem:compatible}.  Now we prove the other direction. If $E$ is compatible, then $E_{\delta_E} = E$ $P$-almost surely. Therefore if $E$ is sharp so is $E_{\delta_E}$ so $\delta_E$ is $\Gamma_\all$-admissible, again by applying the first part of the theorem. 

\subsection{Proof of Lemma~\ref{lem:inc-in-lr}}
\label{sec:proof-inc-in-lr}
Suppose not, so there exists $A_1, A_2$ and $b$ such that 
\begin{equation*}
    \inf_{X_1\in A_1}E_\delta(X_1) > \sup_{X_2\in A_2} E_\delta(X_2),\text{~ and ~} \sup_{X_1\in A_1}\lr(X_1) < \sup_{X_2\in A_2} \lr(X_2). 
\end{equation*}
For convenience, define  
\begin{equation*}
    \gamma_1 = \inf_{X_1\in A_1}E_\delta(X_1), \quad \gamma_2 = \sup_{X_2\in A_2}E_\delta(X_2). 
\end{equation*}
We will define a new decision rule $\deltahat$ which is strictly preferable to $\delta$ with respect to $\Gamma_\const$. 
Pick any $\Delta\in\Re$ such that $0<\Delta<\gamma_1-\gamma_2$, which exists by the density of the reals. Define 
\begin{align*}
    \calB_0 &\equiv \{b:  L_b(0,1) \leq \gamma_2\}\\ 
    \calB_1 &\equiv  \{b:  \gamma_2< L_b(0,1) \leq \gamma_2+\Delta\} \\ 
    \calB_2 &\equiv \calB \setminus (\calB_0 \cup \calB_1).
\end{align*}
Notice that $\calB_0,\calB_1$, and $\calB_2$ are all disjoint and form a partition of $\calB$. 
It may be helpful to draw a plot with $L_b(0,1)$ on the y-axis and $\lr(X)$ on the x-axis. $\calB_0$ are those $b$ at the bottom, $\calB_2$ those at the top. 
Further notice that $\calB_2$ is non-empty. Otherwise 
\[\sup_{b\in\calB} L_b(0,1) \leq \gamma_2 + \Delta < \gamma_1\leq  \sup_{X\in A_1} \sup_{b\in\calB} L_b(0,1)\delta(X,b) \leq \sup_{b\in \calB} L_b(0,1),\] 
a contradiction. 
Finally, note that since $ L_b(0,1)$ is increasing in $b$, if $b_1 \notin \calB_0$ and $b_2 >b_1$, then $b_2\notin \calB_0$. Now, for all $b \in \calB_0\cup \calB_1$, we let $\deltahat(\cdot,b)=\delta(\cdot,b)$.  
For all $b \in \calB_2$, we set 
\begin{equation}
  \deltahat(X_1,b) = \delta(X_1,b) - \frac{\eps_1}{ L_b(0,1)}, \text{~ and ~} \deltahat(X_2,b) = \delta(X_2,b) + \frac{\eps_2}{ L_b(0,1)}, 
\end{equation}
for all $X_1\in A_1$ and $X_2 \in A_2$.  Outside of $A_1$ and $A_2$, $\deltahat$ will be defined as $\delta$. 
Throughout this proof, to save ourselves from constantly writing quantifiers, we will always assume that $X_1\in A_1$ and $X_2\in A_2$.

We must first ensure that $\deltahat$ is well-defined, i.e., $\deltahat(X_1,b)\geq 0$ and $\deltahat(X_2,b)\leq 1$ (it is clear that $\deltahat(X_1,b) \leq \delta(X_1, b) \leq 1$ and $\deltahat(A_2,b) \geq \delta(A_2,b) \geq 0$).  
By Lemma~\ref{lem:representation}, we may assume that $\delta(X_1,b) = \min\{1, E_\delta(X_1)/ L_b(0,1)\}$ and $\delta(X_2,b) = \min\{1, E_\delta(X_2)/ L_b(0,1)\}$. Therefore, to ensure that $\deltahat(X_1,b) = \min\{1, E_\delta(X_1)/L_b(0,1)\} - \eps_1/L_b(0,1)\geq 0$, it suffices that (a) $\eps_1\leq  L_b(0,1)$ and (b) $\eps_1\leq \inf_{X_1\in A_1} E_\delta(X_1) = \gamma_1$ (consider the two cases in the minimum).  Since $ L_b(0,1)$ is increasing in $b$ and $L_b(0,1)>\gamma_2+\Delta$ for all $b\in\calB_2$,  we can ensure that (a) and (b) are both  met by taking 
\begin{equation}
\label{eq:pf-inc-d1}
 \eps_1\leq  \gamma_2 + \Delta <\gamma_1.
\end{equation}
As for $\deltahat$ on $A_2$, notice that $\deltahat(X_2,b) = (E_\delta(X_2) + \eps_2)/ L_b(0,1)$ for all $b\in \calB_2$, which is at most 1 if $E_\delta(X_2)  +\eps_2\leq  L_b(0,1)$. Since $ \gamma_2 + \Delta \leq L_b(0,1)$ for $b\in \calB_2$ and $E_\delta(X_2) \leq \gamma_2$, it suffices to take $\eps_2 \leq \gamma_2 + \Delta - \gamma_2$, i.e., 
\begin{equation}
\label{eq:pf-inc-d2}
 0<\eps_2\leq \Delta.   
\end{equation} 
We have shown that there exist $\eps_1,\eps_2>0$ such that $\deltahat$ is well-defined. Next we show that they can be chosen to ensure that $\deltahat$ remains type-I risk safe and is $\Gamma_\const$-preferable to $\delta$. We begin with type-I safety. Since $\deltahat$ changes only on $A_1$ and $A_2$, it suffices to show that 
\begin{align}
\label{eq:pf-inc-in-lr-1}
    \int_{A_1\cup A_2} \sup_b  L_b(0,\deltahat(x,b))\d\P  \leq  \int_{A_1\cup A_2} \sup_b  L_b(0,\delta(x,b))\d\P.  
\end{align}
For all $b\in \calB_0\cup\calB_1$, we have $\delta(X_1,b) = \min\{1, E_\delta(X_1)/L_b(0,1)\} = 1$ since $L_b(0,1)\leq \gamma_2+\Delta<\gamma_1\leq E_\delta(X_1)$. Moreover, $\deltahat(X_1,b) = \delta(X_1,b)$ for such $b$. 
For $b\in \calB_2$ meanwhile, $\deltahat(X_1,b) = \min\{1, E_\delta(X_1)/L_b(0,1)\} - \eps_1/L_b(0,1) \leq (E_\delta(X_1) - \eps_1)/L_b(0,1)$ so 
\begin{align*}
    \sup_{b\in\calB_2}L_b(0,\deltahat(X_1,b)) &\leq  \sup_{b\in\calB_2}  L_b(0,1) \left(\frac{E_\delta(X_1)-\eps_1}{L_b(0,1)}\right) = E_\delta(X_1)- \eps_1,
\end{align*}
and 
\begin{align*}
    \sup_{b\in\calB} L_b(0,\deltahat(X_1,b)) &= \max\left\{\sup_{b\in\calB_0\cup B_1}  L_b(0,\delta(X_1,b)), \sup_{b\in\calB_2}  L_b(0,\deltahat(X_1,b)) \right\} \\
    &\leq \max\left\{\sup_{b\in \calB_0\cup\calB_1}  L_b(0,1), E_\delta(X_1) - \eps_1 \right\} \\ 
    &\leq \max\left\{\gamma_2 + \Delta, E_\delta(X_1) - \eps_1 \right\}.
\end{align*}
We want this final quantity to equal $E_\delta(X_1) - \eps_1$, so we choose $\eps_1$ such that 
\begin{equation}
\label{eq:pf-inc-d3}
 0 < \eps_1 \leq \gamma_1 - (\gamma_2 + \Delta) \leq E_\delta(X_1) - (\gamma_2 + \Delta),  
\end{equation}
which is possible by our choice of $\Delta$. 
This ensures the  maximum in the above display is achieved by $E_\delta(X_1)-\eps_1$ and we have 
\begin{equation}
\label{eq:pf-inc-d3.1}
    \sup_b L_b(0,\deltahat(X_1,b)) \leq E_\delta(X_1) - \eps_1. 
\end{equation}
Now let us consider $A_2$. For $b\in \calB_0$ we have $\deltahat(X_2,b) = \delta(X_2,b) \leq 1$ and for $b\in \calB_1$, $\deltahat(X_2,b) = \delta(X_2,b) = \min\{1, \gamma_2/L_b(0,1)\} = E_\delta(X_2)/L_b(0,1)$. For $b\in\calB_2$, $\deltahat(X_2,b) = \delta(X_2,b) = E_\delta(X_2) / L_b(0,1) + \eps_2/L_b(0,1)$, so 
\begin{align*}
    \sup_{b\in\calB}  L_b(0,\deltahat(X_2,b) &\leq \max\left\{ \sup_{b\in\calB_0}  L_b(0,1), \sup_{b\in\calB_1} L_b(0,\deltahat(X_2,b)), \sup_{b\in \calB_2} L_b(0,\deltahat(X_2,b))\right\} \\
    &= \max\left\{ \sup_{b\in\calB_0}  L_b(0,1), \sup_{b\in\calB_1} E_\delta(X_2), \sup_{b\in \calB_2} E_\delta(X_2) + \eps_2\right\} \\
    &\leq \max\{ \gamma_2, \gamma_2, \gamma_2 + \eps_2\} = \gamma_2 + \eps_2. 
\end{align*}
Combining this and~\eqref{eq:pf-inc-d3.1},  the left hand side of~\eqref{eq:pf-inc-in-lr-1} can be upper bounded as 
\begin{align*}
    \int_{A_1\cup A_2} \sup_{b\in\calB}  L_b(0,\deltahat(x,b))\d\P &\leq \int_{A_1}E_\delta(x)\d\P \d\P - \eps_1P(A_1) + \int_{A_2}E_\delta(x)\d\P - \eps_2P(A_2) \\ 
    &= \int_{A_1\cup A_2} \sup_b L_b(0,\delta(x,b)) \d\P - \eps_1 P(A_1) + \eps_2 P(A_2). 
\end{align*}
For~\eqref{eq:pf-inc-in-lr-1} to hold we thus require that $-\eps_1 P(A_1) + \eps_2 P(A_2) \leq 0$, i.e., 
\begin{equation}
\label{eq:pf-inc-in-lr-2}
  \frac{\eps_2}{\eps_1} \leq \frac{P(A_1)}{P(A_2)}.  
\end{equation}
Still leaving the precise choice of $\eps_1$ and $\eps_2$ unspecified for the moment, let us move on to type-II risk. We claim that for all $b\in\calB_2$, 
$\E_Q [\deltahat(X,b))] \geq  \E_\Q [\delta(X,b))]$. Since type-II risk remains unchanged for $b\in \calB_0\cup\calB_1$ (since $\deltahat(\cdot,b) = \delta(\cdot,b)$ for such $b$), this will show that $\deltahat$ is strictly preferable to $\delta$ with respect to $\Gamma_\const$. 

As with type-I risk, because $\deltahat$ is the same as $\delta$ outside of $A$, it suffices to focus on $A_1$ and $A_2$ and show that 
\begin{equation}
\label{eq:pf-inc-lr-21}
  \E_Q [\deltahat(X,b)\ind {X\in A}] \geq  \E_\Q [\delta(X,b)\ind{X\in A}],  
\end{equation}
for all $b\in\calB_2$. For these $b$, write 
\begin{align*}
    \int_{A_1} \deltahat(x,b)\d\Q &= \int_{A_1} \left(\delta(x,b) - \frac{\eps_1}{L_b(0,1)}\right)\d\Q  
    = \int_{A_1} \delta(x,b) \d\Q - \frac{\eps_1}{L_b(0,1)} Q(A_1).
\end{align*}
Similarly, 
\begin{align*}
    \int_{A_2} \deltahat(x,b)\d\Q =  \int_{A_2}  \delta(x,B) \d\Q + \frac{\eps_2}{L_b(0,1)}Q(A_2).
\end{align*}
Combining these two displays, we see that \eqref{eq:pf-inc-lr-21} holds iff 
\begin{equation}
\label{eq:pf-inc-in-lr-3}
    \frac{\eps_2}{\eps_1} > \frac{Q(A_1)}{Q(A_2)}. 
\end{equation}
We choose $\eps_1 = w P(A_1)$ and $\eps_2 = wP(A_2)$, where $w>0$ is small enough to ensure that conditions 
\eqref{eq:pf-inc-d1}, \eqref{eq:pf-inc-d2}, and \eqref{eq:pf-inc-d3} are met (note that none of these conditions are in conflict with one another; all are upper bounds on $\eps_1$ and $\eps_2$). Then, 
\[\frac{\eps_2}{\eps_1} = \frac{P(A_1)}{P(A_2)} > \frac{Q(A_1)}{Q(A_2)},\]
where the final inequality follows from Lemma~\ref{lem:QA>QB}. Inequalities~\eqref{eq:pf-inc-in-lr-2} and~\eqref{eq:pf-inc-in-lr-3} are thus met, demonstrating that $\deltahat$ is strongly preferable to $\delta$ with respect to $\Gamma_\const$. This contradicts that $\delta$ is $\Gamma_\const$-admissible, so we conclude that $E_\delta(X)$ is indeed increasing in the likelihood ratio $P$-almost surely. The second part of the lemma follows by applying Lemma~\ref{lem:test-evalue-inc}.

\subsection{Proof of Lemma~\ref{lem:compatible-inc}}
\label{sec:proof-compatible-inc}
Suppose not. By Lemma~\ref{lem:E>=Lb} we have that $E_\delta(X) \geq \inf_{b\in\calB}L_b(0,1)$ $Q$-almost surely. Morever, we cannot have $E_\delta(X) > \sup_{b\in \calB} L_b(0,1)$ by definition. Therefore, there exists some $A$ with $Q(A)>0$ and some partition of $\calB = \calB_0 \cup \calB_1$ such that 
\[
\sup_{b\in \calB_0}L_b(0,1) < E_\delta(X) < \inf_{b\in \calB_1}L_b(0,1). 
\]
For simplicity we will assume that there exist $b_1,b_2$ such that $b_1 = \sup_{b\in \calB_0} L_b(0,1)$ and $b_2 = \inf_{b\in\calB_1}L_b(0,1)$, but the proof can be amended if these supremum and infimum are not in $\calB$. We can thus write $\calB_1 = \{b\geq b_2\}$ since the losses are assumed to be increasing in $b$.  

Let $A = A_1\cup A_2$ where $P(A_1) = P(A_2)$ and $\lr(X_1)<\lr(X_2)$ for all $X_1\in A_1$ and $X_2\in A_2$ (this is possible by continuity). 
Define a new decision rule $\deltahat$ such that, for all $b\in\calB_0$,  $\deltahat(X,b)= \delta(X,b) - \eps/ L_b(0,1)$ for all $X\in A_1$ and $\deltahat(X,b) = \delta(X,b) + \eps/ L_b(0,1)$ for all $X\in A_2$. Otherwise $\deltahat$ is the same as $\delta$. We have $0<\delta(X,b)<1$ for all $X\in A$ and $b\in\calB_1$ (the lower bound of 0 is implied by Lemma~\ref{lem:representation}). Therefore, we may find $\eps$ small enough such that $0<\deltahat(X,b)\leq 1$ for all $X\in A$. In particular, let us ensure that $\eps$ is small enough such that 
\begin{equation}
\label{eq:pf-rich-d1}
  L_{b_1}(0,1) + \eps < E_\delta(X).  
\end{equation}
To see that $\deltahat$ is type-I risk safe, observe that for $X\in A_1$, 
\begin{align*}
    E_{\deltahat}(X) &= \sup_{b}L_b(0, \deltahat(X,b))  \\
    &= \max\left\{ \sup_{b<b_2} L_b(0,\delta(X,b)), \sup_{b\geq b_2} L_b(0,\deltahat(X,b))\right\} \\ 
    &= \max\left\{ L_{b_1}(0,1), \left(\sup_{b\geq b_2} L_b(0,1)\delta(X,b)\right) - \eps \right\} \\ 
    &= \max\left\{ L_{b_1}(0,1), E_\delta(X) - \eps \right\} \\
    &= E_\delta(X) -\eps,
\end{align*}
where the final equality follows by~\eqref{eq:pf-rich-d1}. 
Similarly, for $X\in A_2$, 
\begin{align*}
    E_{\deltahat}(X) 
    &= \max\left\{ L_{b_1}(0,1), \left(\sup_{b\geq b_2} L_b(0,1)\delta(X,b)\right) + \eps \right\} \\ 
    &= \max\left\{ L_{b_1}(0,1), E_\delta(X) + \eps \right\} \\ 
    &= E_\delta(X) + \eps.
\end{align*}
Therefore, 
\begin{align*}
    \int_A E_{\deltahat}(x)\d\P  &= \int_{A_1} (E_\delta(x) - \eps)\d\P  + \int_{A_2} (E_\delta(x) + \eps)\d\P\\ 
    &= \int_A E_\delta(x) - \eps P(A_1) + \eps P(A_2) = \int_A E_\delta(x)\d\P, 
\end{align*}
since $P(A_1) = P(A_2)$ by assumption. From this it follows that $\risk_P(\deltahat) \leq 1$ since 
\[
\E_P[E_{\deltahat}(X)] = \int_{A'}E_{\deltahat}(x) \d\P + \int_{A}E_{\delta}(x) \d\P = \int_{\calX} E_\delta(x) \d\P \leq 1.
\]
So much for type-I risk. For type-II risk, the argument is nearly identical to that in Lemma~\ref{lem:E>=Lb}, so we omit it here. 

\subsection{Proof of Lemma~\ref{lem:inc-in-lr-binary}}
\label{sec:proof-inc-in-lr-binary}

Throughout the proof we drop the superscript ``bin'' on $\delta$. 
By Lemma~\ref{lem:test-evalue-inc} $\delta(X,b)$ is increasing in $\lr$ iff $E_\delta$ is increasing in the $\lr$, so we focus on the test family itself. 

Suppose that for some $b^*\in\calB$, $\delta(\cdot, {b^*})$ is not an increasing function of $\lr$. Then we can find two sets, $A_1,A_2\subset\calX$ with positive measure under both $\P$ and $\Q$ such that, for all $X_1\in A_1$ and $X_2\in A_2$, 
    \begin{equation}
    \label{eq:decreasing_lr}
    \delta(X_1,b^*) = 1, \; \delta(X_2,b^*) = 0, \text{~ and ~} 
    \sup_{X\in A_1} \lr(X_1) < \inf_{X_2\in A_2} \lr(X_2) 
    \end{equation}
    Define a set $G = [t_-, t_+]\subset \calB \cup \{\inf(\calB)\}\cup \{\sup(\calB)\}$ as follows.\footnote{Here $\sup(\calB) = \sup_{b\in\calB} b$. Similarly for $\inf(\calB)$. }  
    Suppose we can find some $A_1'\subset A_1$ and $t>  b^*$ such that $\delta(X_1,b)=0$ for all $b>t$ and all $X_1\in A_1'$. Then take $t^+= t$ and redefine $A_1\gets A_1'$. If no such $A_1'$ and $t$ exist, then $\delta(X_1,b)=1$ for all $X_1\in A_1$ all $b\geq b^*$, since $\delta$ is nonincreasing in $b$ by Lemma~\ref{lem:decreasing_in_b} (if $\delta(X,b) = 0$ for \emph{some} $b\geq b^*$ then $\delta(X,{b'})=0$ for all $b'\geq b$). 
    In this case we set $t^+ = \sup(\calB)$. 

    As for $t_-$, suppose we can find some $A_2'\subset A_2$ and some $t\leq b^*$ such that $\delta(X,b) = 1$ for all $x\in A_2'$ and $b<t$. Then we take $t = t_-$ and redefine $A_2 \gets A_2'$. Otherwise, $\delta(X,b)=0$ for all $b\leq b^*$ for all $X\in A_2$. 
    In this case take $t_-=\inf(\calB)$. 

    Finally, if $\P(A_2)>\P(A_1)$, then take a subset of $A_2'\subset A_2$ such that $\P(A_2') = \P(A_1)$ (which is possible by continuity). We may therefore assume that $\P(A_1) = \P(A_2)$. 

    The upshot of this construction is contained in the following three facts: 
    \begin{enumerate}
        \item[F1.] $\P(A_1)= \P(A_2)$
        \item[F2.] If $b>t_+$ then $\delta(X_1,b)$ for all $X_1\in A_1$. 
        \item[F3.] If $b<t_-$ then $\delta(X_2,b) = 1$ for all $X_2\in A_2$. 
    \end{enumerate}
    As we did in the proof of Lemma~\ref{lem:inc-in-lr}, we will always assume that $X_1\in A_1$ and $X_2\in A_2$ to save ourselves from constantly writing quantifiers. 
    
    Now, let $G^+ :=\{ b\in\calB: b>t_+\}$ and $G^- = \{b\in\calB: b<t_-\}$. Either (or both) of these sets may be empty. Note that $G \cup G^+\cup G^-= \calB$. If $G^+$ and $G^-$ are non-empty, facts F1 and F2 above can be rephrased as 
    \begin{equation}
    \label{eq:all-0-or-1}
      \sup_{X_1\in A_1} \sup_{b\in G^+} \delta(X_1,b)= 0, \quad \inf_{X_2\in A_2} \inf_{b\in G^-} \delta(X_2,b) =1.
    \end{equation}
    We define a new decision family, $\deltahat$, where 
    \begin{equation}
        \deltahat(X,b) = \begin{cases}
            \delta(X,b) & X\notin A_1\cup A_2\text{ or }b\notin G, \\
            0, & X\in A_1\text{ and }b\in G, \\
            1,& X\in A_2\text{ and }b\in G.
        \end{cases}
    \end{equation}
    We claim that $\deltahat$ remains type-I risk safe and has (sometimes strictly) lower type-II risk than $\delta$, thus demonstrating that $\delta$ is $\Gamma_\const$-inadmissible. 
    To see type-I risk safety, write
    \begin{equation}
    \label{eq:proof-increasing-delta_risk}
        \risk_\P(\deltahat) = \int_{\calX\setminus A_1\cup A_2} \sup_b L_b(0,\delta(x,b))\d\P + \int_{A_1 \cup A_2} \sup_bL_b(0,\deltahat(x,b))\d\P,   
    \end{equation}
    where we've used that on $\calX\setminus A_1\cup A_2$, $\deltahat(\cdot,b) = \delta(\cdot,b)$ for all $b$. Meanwhile for all $X_1\in A_1$, $L_b(0,1) \deltahat(X_1,b)=0$ for all $b\in G\cup G^+$ by F2 and definition of $\deltahat$ (since $\deltahat(X_1,b) \equiv 0$ by construction for $b\in G$ and $\deltahat(X_1,b) = \delta(X_1,b)\equiv 0$ for $b\in G^+$ by F2. If $G^+=\emptyset$ this holds trivially). Hence, for any $X_1\in A_1$, 
    \begin{align*}
     \sup_{b\in\calB} L_b(0,1)\deltahat(x_1,b) = \sup_{b\in G^-} L_b(0,1) \deltahat(x_1,b) = \sup_{b\in G^-} L_b(0,1) \delta(x_1,b) = \sup_{b\in G^-} L_b(0,1),
    \end{align*}
    where we interpret the supremum as 0 if $G^-=\emptyset$. 
    Therefore, 
    \begin{equation*}
        \int_{A_1} \sup_b L_b(0,1)\deltahat(x,b) \d\P \leq \int_{A_1} \sup_{b\in G^-}L_b(0,1) \d\P = \int_{A_2}\sup_{b\in G^-}L_b(0,1) \d\P, 
    \end{equation*}
    since $\P(A_1) = \P(A_2)$. Moreover, 
    \begin{equation*}
        \int_{A_2}\sup_{b\in G^-}L_b(0,1)  \d\P = \int_{A_2}\sup_{b\in G^-}L_b(0,1) \delta(x,b) \d\P \leq \int_{A_2}\sup_{b\in\calB}L_b(0,1)\delta(x,b) \d\P. 
    \end{equation*}
    To summarize, the previous two displays imply that:  
    \begin{equation}
    \label{eq:proof-increasing-A1_bound}
        \int_{A_1} \sup_{b\in\calB} L_b(0,1)\deltahat(x,b)\d\P \leq    \int_{A_2}\sup_{b\in\calB}L_b(0,1)\delta(x,b) \d\P. 
    \end{equation}
    Next, note that for all $X_2\in A_2$, $\deltahat(X_2,b) = 0$ for any $b\in G^+$, 
    $\deltahat(X_2,b) = 1$ for $b\in G$ by definition, and $\deltahat(X_2,b) = \delta(X_2,b) = 1$ for $b\in G^-$ by F3. 
    Therefore, since $L_b(0,1)$ is increasing in $b$, 
    \[\sup_{b\in\calB} L_b(0,1) \deltahat(X_2,b) \leq \sup_{b\in G\cup G^-} L_b(0,1) = \sup_{b\in G} L_b(0,1),\]
    and so, again using that $\P(A_1) = \P(A_2)$, 
    \begin{align}
        \int_{A_2} \sup_{b\in\calB}L_b(0,1)\deltahat_b(x) \d\P &\leq \int_{A_2} \sup_{b\in G}L_b(0,1) \d\P \notag \\
        &= \int_{A_1} \sup_{b\in G}L_b(0,1) \d\P 
        \leq \int_{A_1}\sup_{b\in \calB} L_b(0,1)\delta_b(x) \d \P. \label{eq:proof-increasing-A2_bound}
    \end{align}
    Here the final inequality uses that 
    \[\sup_{b\in G}L_b(0,1) = \sup_{b\in G}L_b(0,1) \delta(X_1,b) \leq \sup_{b\in\calB} L_b(0,1)\delta(X_1,b),\] 
    for all $X_1\in A_1$. 
    Combining \eqref{eq:proof-increasing-A1_bound} and \eqref{eq:proof-increasing-A2_bound} we have 
    \begin{equation}
        \int_{A_1 \cup A_2} \sup_{b\in\calB}L_b(0,1)\deltahat(x,b)\d\P \leq \int_{A_1\cup A_2}\sup_{b\in \calB} L_b(0,1)\delta(x,b)\d\P. 
    \end{equation}
    Hence, by \eqref{eq:proof-increasing-delta_risk}, 
    $\risk_\P(\deltahat)\leq \int_\calX \sup_{b\in\calB} L_b(0,1)\delta_b(X)\d\P = \risk_\P(\delta)\leq 1$.  
    
    Next we show that $\widehat{\delta}$ has strictly lower type-II risk than $\delta$ for all $b\in G$. Since  the type-II risk of $\deltahat$ is the same as that of $\delta$ for all $b\notin G$ (by construction), this will show that $\delta$ is inadmissible.  Since $P(A_1) = P(A_2)$ by construction, we have $Q(A_1) < Q(A_2)$ by Lemma~\ref{lem:QA>QB}. 
    Therefore,  for any $b\in G$, since $\deltahat(X_2,b) = 1$ for all $X_2\in A_2$ and $\deltahat(X_1,b)=0$ for all $X_1\in A_1$, we have 
    \begin{align*}
       \E_{\Q}[\deltahat_b(X)] 
       &= \int_{\calX \setminus A_1\cup A_2} \deltahat(x,b)\d\Q + \int_{A_1\cup A_2}\deltahat(x,b)\Q(A_2) \\ 
       &= \int_{\calX \setminus A_1\cup A_2} \delta(x,b)
       d\Q  + \Q(A_2) \\
        &> \int_{\calX \setminus A_1\cup A_2} \delta(x,b)\d\Q + \Q(A_1) = \E_{\Q}[\delta(X,b)],
    \end{align*}
    which proves that $\delta$ is $\Gamma_\const$-inadmissible, completing the argument. 

\subsection{Proof of Corollary~\ref{cor:decision-curve}}
\label{proof:cor-decision-curve}
    Lemma~\ref{lem:inc-in-lr-binary} implies that a $\consta$-admissible binary test $\vp$ is increasing in $\lr$, hence so too is $E_\psi$ by  Lemma~\ref{lem:test-evalue-inc} in the appendix. Therefore, there exists an increasing function $h$ such that $E_\phi(X) = h(\lr(X))$ and we may write $\vp(X,b) = \ind{E_\phi(X) \geq L_b(0,1)} = \ind{h(\lr(X)) \geq L_b(0,1)} = \ind{\lr(X) \geq h^-(L_b(0,1))}$, where $h^-$ is the generalized inverse of $h$ and the final equality holds since $h$ is increasing. The function $t(b) = h^-(L_b(0,1))$ defines the decision curve.

\subsection{Proof of Theorem~\ref{thm:const-admissibility-binary}}
\label{sec:proof-const-admissibility-binary}

    If $\vp$ is $\Gamma_\const$-admissible then $E_\delta$ is sharp by Lemma~\ref{lem:Ephi>Edelta}, increasing in $\lr$ by Lemma~\ref{lem:inc-in-lr-binary}, and has a minimal decision curve by Lemma~\ref{lem:minimal_decision_curve}. This proves the forward direction. 
    Conversely, 
    suppose that $E_{\vp}$ is sharp and increasing in $\lr$ and that $\vp$ has a minimal decision curve but that $\vp$ is $\Gamma_\const$-inadmissible. Then, by Lemma~\ref{lem:admissibility_by_t} there exists some $\psi$ such that $t_\phi(b)\leq t_\delta(b)$ for all $b\in\calB$ and $t_\phi(b^*)<t_\delta(b^*)$ for some $b^*$, with $Q(t_\phi(b^*)\leq \lr(X) < t_\delta(b^*)) > 0$. Let $I(b^*) = \{X: t_\phi(b^*) \leq \lr(X) < t_\delta(b^*)\}$. We claim that 
    \begin{equation}
    \label{eq:pf-const-admissibility-1}
        \int_{I(b^*)} \sup_b L_b(0,\vp(x,b)) \d\P < \int_{I(b^*)} \sup_b L_b(0,\psi(x,b)) \d\P. 
    \end{equation}
    For $X\in I(b^*)$, $\phi(X,b^*) = \ind{\lr(X) \geq t_\phi(b^*)} = 1$, so $\sup_b L_b(0,\psi(X,b)) \geq L_{b^*}(0,1)$ and the right hand side of~\eqref{eq:pf-const-admissibility-1} is at least $L_{b^*}(0,1) P(I(b^*))$. We claim that the integrand on the left hand side is less than $L_{b^*}(0,1)$. 
    We consider two cases. 
    \begin{enumerate}
        \item Suppose that we can find some $b_0$  such that $t_\phi(b^*) < t_\delta(b_0) < t_\delta(b^*)$ and $\P(t_\phi(b^*) \leq \lr(X) < t_\delta(b_0)) > 0$. Let $I(b^*) = I_1 \cup I_2$ where $I_1 = \{X: t_\phi(b^*)\leq \lr(X) < t_\delta(b_0)\}$ and $I_2 = \{X: t_\delta(b_0)\leq \lr(X) < t_\delta(b^*)\}$. Since $t_\delta$ is increasing, for $X\in I_1$, we have $\ind{\lr(X) \geq t_\delta(b)} = 0$ for all $b>b_0$ 
        so 
        \begin{align*}
         \sup_b L_b(0,\vp(X,b)) = \sup_b L_b(0,1)\ind{\lr(X) \geq t_\delta(b)} \leq L_{b_0}(0,1) < L_{b^*}(0,1). 
        \end{align*}
        Similarly, for $X\in I_2$, $\sup_b L_b(0,\vp(X,b)) \leq L_{b^*}(0,1)$ and it follows that 
        \begin{align*}
            \int_{I(b^*)} \sup_b L_b(0,\vp(x,b)) \d\P &= L_{b_0}(0,1)P(I_1) + L_{b^*}(0,1)P(I_2) < L_{b^*}(0,1) P(I(b^*)),   
        \end{align*}
        using since $b_0 < b^*$ (again since $t_\delta$ in increasing). Thus, in this case, \eqref{eq:pf-const-admissibility-1} holds. 
        \item Next suppose that no such $b_0$ exists. Therefore $\sup_{b<b^*}t_\delta(b) \leq t_\phi(b^*)<t_\delta(b^*)$. 
        Since $Q(\sup_{b<b^*}t_\delta(b) \leq \lr(X) < t_\delta(b^*)) \geq Q(t_\phi(b^*)\leq \lr(X) < t_\delta(b^*))>0$ by assumption of $\phi$, it follows that $\sup_{b<b^*}L_b(0,1) < L_{b^*}(0,1)$ since $\vp$ has a minimal decision curve. Therefore, since $\vp(X,b) = \ind{\lr(X) \geq t_\delta(b)}$ for all $b>b^*$ for $X\in I(b^*)$, we have 
        \begin{align*}
            \int_{I_{b^*}} \sup_{b}L_b(0,\vp(x,b))\d\P &= \int_{I(b^*)} \sup_{b<b^*} L_b(0,1)\ind{\lr(x) \geq t_\delta(b)} \d\P \\
            &\leq  \int_{I(b^*)} \sup_{b<b^*} L_b(0,1) \d\P < L_{b^*}(0,1)P(I(b^*)), 
        \end{align*}
        and the final quantity is a lower bound on the right hand side of~\eqref{eq:pf-const-admissibility-1}. 
    \end{enumerate}
    We have thus proved~\eqref{eq:pf-const-admissibility-1}. 
    Note all for all $X\in\calX$, $\phi(X,b) \geq \delta(X,b)$ since $t_\phi(b) \leq t_\delta(b)$. Therefore, 
    \begin{align*}
        \risk_P(\vp) &= \int_{\calX\setminus I(b^*)} \sup_b L_b(0,\vp(X,b)) \d\P +  \int_{I(b^*)} \sup_b L_b(0,\vp(X,b)) \d\P \\ 
        &< \int_{\calX\setminus I(b^*)} \sup_b L_b(0,\psi(X,b)) \d\P +  \int_{I(b^*)} \sup_b L_b(0,\psi(X,b)) \d\P \\
        &= \risk_P(\psi)\leq 1, 
    \end{align*}
    contradicting that $E_{\vp}$ is sharp. This proves the first part of theorem. 

    For the second half of the theorem, let $E$ be an e-variable for $P$. Suppose $\vp_E$ is $\Gamma_\const$-admissible. By Lemma~\ref{lem:compatible}, $E = E_{\vp_E}$. 
    Then, by the first past of the theorem, $E$ is sharp, compatible, and increasing in $\lr$. 
Conversely, suppose $E$ is sharp, compatible, and increasing in $\lr$. By compatibility, $E = E_{\vp_E}$ so it remains only to show that $\vp_E$ has a minimal decision-curve, in which case $\Gamma_\const$-admissibility will again follow from applying the first part of the theorem. 
Let $h$ be an increasing function such that $E(X) + h(\lr(X))$ and let $h^-(y)=\inf\{x\in\Re: h(x)\geq y\}$ by its generalized inverse. Since $h$ is increasing, $h^-$ is left continuous. Further, 
\begin{equation*}
    \vp_E(X,b) = \ind{E(X) \geq L_b(0,1)} = \ind{\lr(X) \geq h^-(L_b(0,1))},
\end{equation*}
so $h^-(L_b(0,1))$ is equal to $t_\delta(b)$ $P$-almost surely. Therefore, if $\sup_{b<b^*}L_b(0,1) = L_{b^*}(0,1)$ then $h^-(\sup_{b<b^*}L_b(0,1)) = h^-(L_{b^*}(0,1))$, hence $t_\delta(b)$ is a minimal decision curve. This completes the proof.

\section{On the definition of admissibility} 
\label{app:admissibility}

First let us discuss the notion of admissibility used by \citet{grunwald2024beyond}, which we henceforth refer to as G-admissibility. We say a test family $\delta$ is G-\emph{in}admissible if there exists a type-I risk safe test family $\phi$ such that, for all $b\in\calB$ and all $Q\in\calQ$, 
\begin{equation}
    Q(L_b(1,\phi(X,b)) > L_b(1,\delta(X,b)) = 0,
\end{equation}
and there exists some $b\in\calB$ and some $Q\in\calQ$ such that 
\begin{equation}
    Q(L_b(1,\phi(X,b)) < L_b(1,\delta(X,b))) > 0. 
\end{equation}
In other words, $\phi$ has loss at most that of $\delta$ with $Q$-probability 1 on all losses, and has lower loss than $\delta$ with positive probability. If no such $\phi$ exists then we say that $\delta$ is G-admissible. We note that \citet{grunwald2024beyond} defines G-admissibility in terms of type-I losses instead of type-II, after preprocessing the losses to remove any such that $L_b(1,a) > L_b(1,a)$ and $L_b(0,a) > L_b(0,a)$ for some $a$, which can be done without loss of generality. 

Now, if we translate Gr\"{u}nwald's results to our setting---in particular, we consider binary actions only---then his main result can be formulated as follows. Consider testing $\calP$ against $\calQ$.  Then: 
\begin{enumerate}
    \item If $\delta$ is G-admissible, then there exists some e-variable $E$ for $\calP$ such that $\delta(X,b) = \ind{E(X) \geq L_b(0,1)}$. 
    \item If $\delta = \ind{E(X)\geq L_b(0,1)}$ for some some e-variable $E$ for $\calP$ that is sharp, compatible with $\calL_0$, and all $P\in \calP$ are absolutely continuous with one another, then $\delta$ is G-admissible. 
\end{enumerate}

\edit{
Let us now discuss some relationships between $\Gamma$-admissibility and G-admissibility. 
First, $\consta$-admissibility implies G-admissibility. This is intuitive: If a test family $\delta$ cannot be beat in expectation for fixed $b$, then it cannot be beat with probability 1 either. 

\begin{lemma}
    $\consta$-admissibility implies G-admissibility. 
\end{lemma}
\begin{proof}
We prove the contrapositive. Suppose that $\delta$ is G-inadmissible. So there exists some $\phi$ such that, for all $b\in\calB$, 
\[\phi(X,b) \geq \delta(X,b)\quad Q\text{-almost everywhere for all }Q\in\calQ,\] 
and $\phi(X,b_0) > \delta(X,b_0)$ for some $b_0$ and all $X\in A$ with $Q_0(A)>0$ for some $Q_0$. Therefore, $\E_{Q_0} [\delta(X,b_0)\ind{A}] < \E_{Q_0}[\phi(X,b_0)\ind{A}]$  and 
$\E_{Q_0}[\delta(X,b_0) \ind{A^c}]\leq \E_{Q_0}[\phi(X,b_0)\ind{A^c}]$. Thus $\E_{Q_0}[\delta(X,b_0)] < \E_{Q_0}[\delta(X,b_0)]$. 
Moreover, since $\delta(X,b)\geq \phi(X,b)$ $\calQ$-almost surely for all $b$, we have $\E_Q[\delta(X,b)]\leq  \E_Q[\delta(X,b)]$ for all $b$, hence $\phi$ is strictly preferable to $\delta$ with respect to $\alla$. Thus G-inadmissibility implies $\consta$-inadmissibility.     
\end{proof}

Note, however, that this proof does not work if we replace $\consta$-admissibility by $\alla$-admissibility. Consider the following example: Set $\calB=[1,\infty)$ and let $X$ take values on $[1,\infty)$. Suppose that $\delta(X,b) > \phi(X,b)$ for all $X\neq b$, and for $X=b$ we have $\delta(X,b) <\phi(X,b)$. Since $X=b$ has measure zero, $\delta$ is strictly preferable to $\phi$ with respect to G-admissibility. But for $B(X) = x$ we have $\E_\Q[L_B(1,\delta(X,B))] = \E_\Q[L_X(1,0)(1 - \delta(X,X))] > \E_\Q[L_X(1,0) (1 - \phi(X,X))] = \E_Q[L_B(1,\phi(X,B))]$, so $\delta$ is not preferable to $\phi$ with respect to $\alla$-admissibility. This shows that we cannot conclude $\alla$-inadmissibility from G-inadmissibility. 

Of course, this does not provide a counterexample to the claim that $\alla$-admissibility implies G-admissibility per se. But it sheds light on the fact that one would need a proof totally unlike the one above. This makes us doubt that the claim is true.  
}

Next let us return to a question raised in Section~\ref{sec:testing_as_decision_theory}: Why not define admissibility with a supremum over losses inside the expectation? That is, we could say that $\delta$ is admissible if it is type-I risk safe and for all other type-I risk safe tests $\phi$, 
\begin{equation}
\label{eq:stringent_admissibility}
  \E\bigg[\sup_{b\in\calB} ( L_b(1,\delta_b(X)) -  L_b(1,\phi_b(X))\bigg] \leq 0. 
\end{equation}
Such a definition, however, leads to an empty set of admissible test families. Intuitively, no test can be as good as any other on every data-dependent loss. To do so, it must be the most powerful test on every loss. That is, for a point null versus a point alternative, it must be the Neyman-Pearson test. But it is invalid to play the Neyman-Pearson on each loss, since such a test family is not type-I risk safe. 
This is formalized in the following argument. 

Suppose we are testing a simple null vs simple alternative, $\calP = \{\P\}$ vs $\calQ = \{\Q\}$. 
Consider two loss functions, $\calB = \{0,1\}$. Suppose $\delta$ is admissible according to~\eqref{eq:stringent_admissibility} and consider the Neyman-Pearson test on each loss. That is, for $i\in\{0,1\}$, define 
\[\phi^{(i)}(X,b) = \begin{cases}
    \ind{\lr(X) \geq \kappa_{i}}, & \text{if }b=i \\ 
    0, & \text{otherwise},
\end{cases}\]
where, as usual,  $\kappa_i$ is chosen such that $\E_{\P}[\phi^{(i)}(X,b)] = 1/L_{i}(0,1)$. 
Note that $\phi^{(i)}$ is type-I risk safe. 
Also, 
\begin{align*}
    & \quad \E_{Q} \left[\sup_b\left( L_b(1,\delta(X,b)) -  L_b(1,\phi^{(i)}(X,b))\right)\right] \\
    &\geq \E_{Q} [L_i(1,\delta(X,i)) - L_i(1,\phi^{(i)}(X,i)] \\ 
    &= L_i(1,0) \E_{Q} [\phi^{(i)}(X,i) - \delta(X,i)] \geq 0, 
\end{align*}
where the last inequality follows 
since $\phi^{(i)}$ is the uniformly most powerful test  by the Neyman-Pearson lemma. Therefore, by admissibility we conclude that $\E_{Q}[\phi^{(i)}(X,i) - \delta(X,i)]=0$. That is, $\delta$ has the same power as $\phi^{(i)}$ under loss $b_i$ and thus, by the uniqueness of the Neyman-Pearson lemma, we have $\delta = \phi^{(i)}$ almost surely. But $i$ was arbitrary, so $\phi^{(1)} = \delta = \phi^{(2)}$,  a contradiction. 

\subsection{Type-I risk as the supremum over adversaries}
\label{sec:risk-as-adversary}

Let us now discuss~\eqref{eq:risk-as-adversary}, i.e., the fact that we can write type-I risk as the supremum over maps from the data to losses. 

Recall that $(\calA,\Sigma)$ is a standard Borel space if there exists a complete, separable metric $\rho$ on $X$ such that $\Sigma$ becomes the Borel $\sigma$-field under $\rho$. A probability space $(\Omega, \calF, P)$ is Borel if $(\Omega, \calF)$ is Borel. If we call a function $Y:(\calA,\Sigma_\calA)\to(\Re,\Sigma_\Re)$ $\Sigma_\calA$-measurable, we mean that it is  $\Sigma_\calA/\Sigma_\Re$-measurable where $\Sigma_\Re$ is understood to the the Borel $\sigma$-field; likewise if $\Re$ is replaced with any subset of $\Re$.

\begin{lemma}
\label{lem:risk-as-adversary}
Let $(\Omega, \calF,P)$ be a Borel probability space. 
Equip $\calX$ and $\calB$ with $\sigma$-fields $\Sigma_X$ and $\Sigma_\calB$ such that $(\calX,\Sigma_\calX)$ and $(\calB,\Sigma_\calB)$ are standard Borel spaces. 
Consider any set of type-I loss functions $\{L_b(0,1):b\in\calB\}$ such that $b\mapsto L_b(0,1)$ is $\Sigma_\calB$-measurable. 
Let $\delta$ be a test family that is $(\Sigma_\calX\otimes \Sigma_\calB)$-measurable. 
If $\delta$ has finite type-I risk, then: 
\begin{equation}
\label{eq:risk-as-adversary-2}
    \E_{X\sim \P}\sup_{b\in\calB} L_b(0,\delta(X,b))= \sup_{B:\calX \to \calB} \E_{X\sim \P} L_{B(X)}(0,\delta(X,B(X))),
\end{equation}    
where the supremum on the right is over all maps $B$ that are $\Sigma_\calX/\Sigma_\calB$-measurable. 
\end{lemma}
\begin{proof}
First we claim the left hand side is well-defined. 
Define the function $g: \Omega\to \Re$ by  $g(\omega) = \sup_b L_b(0,\delta(X(\omega),b))$. We claim that $g$ is $\calF$-measurable. Since $L_b= L_b(0,1)$ is $\Sigma_\calB$-measurable and $\delta$ is $(\Sigma_\calX\otimes\Sigma_\calB)$-measurable, $L_b(0,\delta(x,b))=L_b(0,1)\delta(x,b)$ is $(\Sigma_\calX\otimes\Sigma_\calB)$-measurable. Consequently, the map $(\omega,b)\mapsto L_b(0,\delta(X(\omega),b))$ is $(\calF\otimes \Sigma_\calB)$-measurable. Consider the super-level sets 
\begin{equation*}
 \{\omega: g(\omega)>c\} = \Gamma_\Omega(\underbrace{\{(\omega, b): L_b(0,1)\delta(X(\omega),b) > c\}}_{=:A_c}),   
\end{equation*}
where $\Gamma_\Omega$ represents the projection of a set in $\Omega\times \calB$ onto $\Omega$. The set $A_c$ is Borel (i.e., lies in $\Sigma_\calX\otimes\calB$) by the measurability of $L_b(0,1)\delta(X(\omega),b)$. The projection of a Borel set on a product space is analytic by Suslin's theorem, hence $\{\omega:g(\omega)>c\}$ is analytic~\citep[Exercise 14.3]{kechris2012classical}. Moreover, every analytic set is universally measurable~\citep[Theorem 21.10]{kechris2012classical}, meaning it is measurable for any finite Borel measure on $\Omega$, and $P$ is finite and Borel by assumption. Finally, if the super-level sets of a real-valued function are measurable, then the function itself measurable. We conclude that $g$ is $\calF$-measurable, so the left hand side of~\eqref{eq:risk-as-adversary} exists. Now, for any $X$ and $B$, $L_{B(X)}(0,\delta(X,B(X))) \leq \sup_b L_b(0,\delta(X,b))$. Taking expectations and then taking the supremum over all maps $B$, we see that the right hand side of~\eqref{eq:risk-as-adversary} is at most the left hand side. 

To prove the converse fix $\eps>0$ and let $\omega$ be given. Define the multifunction $\Xi_\eps:\Omega \rightrightarrows \calB$ 
\begin{equation*}
    \Xi_\eps(\omega) = \left\{ b^*\in\calB: L_{b^*}(0,\delta(X(\omega), b^*)) > \sup_b L_b(0,\delta(X(\omega),b)) - \eps\right\},
\end{equation*}
which is $\calF$-measurable since $(\omega,b)\mapsto L_b(0,\delta(X(\omega),b))$ is measurable. The Kuratowski–Ryll-Nardzewski measurable selection theorem~\citep{kuratowski1965general} (see also \citealt[Theorem 12.13]{kechris2012classical}) guarantees the existence of a measurable selector $B^*_\eps(X(\omega))$ such that $B_\eps(X(\omega)) \in \Xi_\eps(\omega)$, hence $L_{B^*_\eps(X)}(0,\delta(X(\omega),B_\eps^*(X)) + \eps > \sup_b L_b(0,\delta(X,b))$. Taking expectations and letting $\eps\downarrow0$ shows that the 
left hand side of~\eqref{eq:risk-as-adversary} is at most the right, which proves the result. 
\end{proof}

The following corollary
extends the result to composite null hypotheses. The proof follows by taking the supremum over $P\in\calP$ on both sides of~\eqref{eq:risk-as-adversary-2}. 

\begin{corollary}
    \label{cor:risk_as_adversary_composite}
    Let $(\Omega, \calF)$ be a measurable space and let $\calP$ be a collection of probability measures such that $(\Omega,\calF,P)$ is a Borel probability space for for each $P\in\calP$. Let $(\calX,\Sigma_\calX)$, $(\calB,\Sigma_\calB)$, and $\{L_b(0,1):b\in\calB\}$ be as in Lemma~\ref{lem:risk-as-adversary}. Then 
    \begin{equation}
    \sup_{P\in\calP}\E_{X\sim \P}\sup_{b\in\calB} L_b(0,\delta(X,b))= \sup_{P\in\calP}\sup_{B:\calX \to \calB} \E_{X\sim \P} L_{B(X)}(0,\delta(X,B(X))),
\end{equation} 
where the rightmost supremum is over all maps $B$ that are $\Sigma_\calX/\Sigma_\calB$-measurable. 
\end{corollary}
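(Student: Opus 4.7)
The plan is to reduce this composite-null statement to the single-distribution Lemma~\ref{lem:risk-as-adversary} applied pointwise in $P$, and then take the outer supremum over $\calP$. First I would fix an arbitrary $P \in \calP$ and verify that every hypothesis of Lemma~\ref{lem:risk-as-adversary} is in force for this single distribution: $(\Omega,\calF,P)$ is Borel by assumption, and the standard Borel structure on $(\calX,\Sigma_\calX)$ and $(\calB,\Sigma_\calB)$, the $\Sigma_\calB$-measurability of $b\mapsto L_b(0,1)$, and the joint measurability of $\delta$ all transfer verbatim from the corollary's hypotheses. So, for each such $P$ the lemma is directly applicable.

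Assume first the finite-risk case, in which $\E_{X\sim P}\sup_b L_b(0,\delta(X,b)) < \infty$ for every $P\in\calP$. Then Lemma~\ref{lem:risk-as-adversary} yields, for each such $P$,
$$\E_{X\sim P}\sup_{b\in\calB} L_b(0,\delta(X,b)) = \sup_{B:\calX\to\calB}\E_{X\sim P} L_{B(X)}(0,\delta(X,B(X))).$$
Taking $\sup_{P\in\calP}$ of both sides immediately produces the identity stated in the corollary, since the outer $P$-supremum simply moves past the equality and the right-hand side is a supremum over pairs $(P,B)$. In the complementary case where some $P_0\in\calP$ yields infinite pointwise expectation, the left-hand side is already $+\infty$; to see the right-hand side is $+\infty$ as well, I would invoke the measurable-selection construction used internally in the proof of Lemma~\ref{lem:risk-as-adversary} at the single distribution $P_0$, producing measurable maps $B_n:\calX\to\calB$ with $\E_{X\sim P_0} L_{B_n(X)}(0,\delta(X,B_n(X))) \to \infty$.

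I do not expect any substantive obstacle. The main, and really only, thing to check is that Lemma~\ref{lem:risk-as-adversary} applies separately to every $P\in\calP$ and that the outer $\sup_{P\in\calP}$ preserves the equality termwise; both are routine. The author flags exactly this in the excerpt by remarking that the proof follows by taking $\sup_{P\in\calP}$ on both sides of~\eqref{eq:risk-as-adversary}.
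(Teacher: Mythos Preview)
Your proposal is correct and matches the paper's own argument, which consists of the single sentence ``the proof follows by taking the supremum over $P\in\calP$ on both sides of~\eqref{eq:risk-as-adversary}.'' Your additional handling of the infinite-risk case (via the measurable selector from the lemma's proof) is a careful completion of a point the paper leaves implicit, but the core approach is identical.
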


\begin{remark}
    \label{rem:measurability}
    There exist pathological choices of $\{L_b\}_{b\in\calB}$ which ensure that $\sup_b L_b(0,\delta(X,b))$ is not measurable, in which case Definition~\ref{def:risk} breaks down and Lemma~\ref{lem:risk-as-adversary} doesn't hold. Instead of making various measurability assumptions as we do in this work, one could circumvent this problem by following \citet{grunwald2024beyond} and calling $\delta$ type-I risk safe if there exists some measurable function $\xi:\calX\to\Re_{\geq 0}$ such that 
    \begin{equation}
        \sup_{\P\in \calP} \E_{X\sim \P} \xi(X) \leq 1\text{~ and ~} \sup_{b\in \calB} L_b(0,\delta(X,b))\leq \xi(X)\quad  \calP\text{-almost surely.}
    \end{equation}
\end{remark}

\section{On the relationship to Bayesian decision theory}
\label{app:bayesian}
A common response to the drawbacks of classical Neyman-Pearson theory that we've highlighted is to pivot to Bayesian decision theory~\citep{bernardo1994bayesian}. While the Bayesian often/typically omits the notion of type-I error control and focuses solely on loss minimization, we show below that he gains the ability to handle post-hoc loss functions for free. We also discuss what a Bayesian notion of type-I risk control might look like, and how to achieve it. 

Moving away from the setting considered in the main paper for a moment, let $\Theta$ be a parameter space, $\calA$ a set of actions, and $L:\Theta\times \calA\to\Re_{\geq 0}$ a loss function. Associated to each $\theta\in\Theta$ is a distribution $P_\theta$. 
Let $\pi$ be a prior on $\Theta$. Given observations $X$, Bayesian decision theory tells us to minimize the expected posterior loss. That is, we take action 
\begin{equation}
\label{eq:bayes_estimator}
    \delta_\pi(X) \equiv \argmin_{a\in\calA} \E_{\theta \sim \pi(\cdot|X)} L(\theta, a).
\end{equation}
A fundamental result is that the \emph{Bayes optimal decision rule}~\citep[Theorem 4.1.1]{lehmann1998theory}, henceforth `Bayes decision' for brevity, is always given by $\delta_\pi$, meaning that it satisfies $B_\pi(\delta_\pi) = \inf_\delta B_\pi(\delta)$ where 
\begin{equation}
\label{eq:bayes_risk}
    B_\pi(\delta) = \E_{\theta\sim \pi} \E_{X\sim P_\theta}[L(\theta, \delta(X))]. 
\end{equation}
(Note that $\theta$ is drawn from the prior $\pi$ in~\eqref{eq:bayes_risk}, not the posterior $\pi(\cdot|X)$ as in~\eqref{eq:bayes_estimator}.) Bayes estimators are, as the name suggests, the Bayesian equivalent to frequentist minimax estimators, so a unified solution for the Bayes decision regardless of the loss function is a big benefit of Bayesian decision theory. 

To apply this to our setting, we take $\calA = \{0,1\}$ and we assume that $\Theta$ is naturally partitioned into $\Theta_0$ and $\Theta_1$, where $\Theta_0$ corresponds to the null and $\Theta_1$ to the alternative. This induces the conditional prior distributions $\pi_i(\theta) = \pi_i(\theta|\theta\in \Theta_i)$ and conditional posterior distributions $\pi_i(\theta|X) = \pi_i(\theta|X,\theta\in\Theta_i)$.  

Assuming as usual that $L(\theta, i) = 0$ for $\theta\in \Theta_i$ and $i\in\{0,1\}$, the Bayes decision becomes  
\begin{align}
    \delta_\pi(X) 
    &= \argmin\big\{ \E_{\theta\sim \pi(\cdot|X)} [L(\theta,0)], \E_{\theta\sim\pi(\cdot|X)}[L(\theta,1)]\big\} \nonumber \\
    &= \argmin\big\{ \E_{\theta\sim \pi_1(\cdot|X)} [L(\theta,0)]\pi(\Theta_1|X), \E_{\theta\sim\pi_0(\cdot|X)}[L(\theta,1)]\pi(\Theta_0|X)\big\} \nonumber \\
    &= \ind{\E_{\theta\sim \pi_1(\cdot|X)} [L(\theta,0)]\pi(\Theta_1|X) \geq \E_{\theta\sim\pi_0(\cdot|X)}[L(\theta,1)]\pi(\Theta_0|X)}.
\end{align}
In words, we play whichever action has the lowest cost if wrong (i.e., results in a false positive or negative), weighted by the probability under the posterior.

To move to a post-hoc setting, let's consider a set of losses $\{L_b\}_{b\in\calB}$. As before, we allow an adversary to choose $b$ after seeing the data and we encode the adversary as a measurable map $B:\calX\to\calB$. As usual, we let the decision rule see $B(X)$ before making a decision. 

Can Bayesian decision theory accommodate a post-hoc selection of the loss? Yes, in the following sense: If we minimize the expected posterior loss as before but on loss $L_{B(X)}$ (which we observe), then the resulting decision rule minimizes Bayes risk, where the Bayes risk is now defined in terms of the loss selected by the adversary. More formally: 

\begin{proposition}
\label{prop:bayesian-post-hoc-valid}
The decision rule 
    \begin{equation*}
\phi_\pi(X) = \argmin_{a\in\calA} \E_{\theta\sim \pi(\cdot|X)} L_{B(X)} (\theta, a),
\end{equation*}
satisfies 
\begin{equation*}
    \E_{\theta\sim\pi} \E_{X\sim P_\theta} L_{B(X)}(\theta, \phi_\pi(X)) = \inf_{\phi} \E_{\theta\sim\pi} \E_{X\sim P_\theta} L_{B(X)}(\theta, \phi(X)), 
\end{equation*}
for all $B:\calX\to\calB$. 
\end{proposition}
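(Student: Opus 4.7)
The plan is to recognize this as a standard pointwise optimization argument once we rewrite the Bayes risk by conditioning on $X$. Specifically, for any decision rule $\phi$ and any adversary $B$, I would factor the joint law of $(\theta,X)$ through the marginal $m$ of $X$ and the posterior $\pi(\cdot\mid X)$ to obtain
\begin{equation*}
\E_{\theta\sim\pi}\E_{X\sim P_\theta}L_{B(X)}(\theta,\phi(X)) = \E_{X\sim m}\bigl[\E_{\theta\sim\pi(\cdot\mid X)}L_{B(X)}(\theta,\phi(X))\bigr].
\end{equation*}
The key point is that $B(X)$ is measurable with respect to $\sigma(X)$, so conditionally on $X$ the index $B(X)$ is a constant, and hence the inner expectation depends on $\phi$ only through its value $\phi(X)\in\calA$ at the observed point.

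Next, for each fixed $X$, I would invoke the pointwise definition of $\phi_\pi$: by construction,
\begin{equation*}
\E_{\theta\sim\pi(\cdot\mid X)}L_{B(X)}(\theta,\phi_\pi(X))\;\leq\;\E_{\theta\sim\pi(\cdot\mid X)}L_{B(X)}(\theta,a)\quad\text{for every }a\in\calA,
\end{equation*}
and in particular for $a=\phi(X)$. Integrating this inequality with respect to the marginal $m$ of $X$ and applying the factorization in the previous display yields
\begin{equation*}
\E_{\theta\sim\pi}\E_{X\sim P_\theta}L_{B(X)}(\theta,\phi_\pi(X))\;\leq\;\E_{\theta\sim\pi}\E_{X\sim P_\theta}L_{B(X)}(\theta,\phi(X)),
\end{equation*}
which is the claim after taking the infimum on the right over all $\phi$.

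The proof is essentially two lines once the factorization and pointwise minimization are set up; the only technical points worth flagging are (i) Fubini, which is justified since the losses are nonnegative, and (ii) measurability of $\phi_\pi$, which requires a standard measurable selection argument (since $\calA=\{0,1\}$ is finite in our binary-action setting, one can simply write $\phi_\pi$ explicitly as an indicator, bypassing any selection theorem). I do not anticipate a genuine obstacle: the result is really the observation that allowing $B$ to depend on $X$ is harmless for a decision rule that is itself allowed to depend on $X$, because conditioning on $X$ renders $B(X)$ non-random.
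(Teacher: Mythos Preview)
Your proposal is correct and follows essentially the same approach as the paper: both rewrite the Bayes risk via Fubini as an integral over the marginal of $X$ of the posterior expected loss, and then observe that minimizing the integrand pointwise (which is precisely the definition of $\phi_\pi$) minimizes the integral. Your remarks on the justification of Fubini via nonnegativity and on the measurability of $\phi_\pi$ in the binary-action case are mild elaborations that the paper leaves implicit.
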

We emphasize that $\phi_\pi$ is well-defined since, as mentioned earlier, $\phi$ is allowed to see $B(X)$. We also note that this result is not new. For example, \citet{grunwald2023posterior} notes that that Bayesian decision can accommodate post-hoc loss functions, but we hope the formal treatment above clarifies in what sense this is true, and allows for easier comparison to our frequentist results.  
\begin{proof}
For any decision rule $\phi$, let $\ell(\phi(x) |x) = \E_{\theta\sim \pi(\cdot|x)} L_{B(x)}(\theta, \phi(x))$ be its expected posterior loss at a given $x$ on loss $L_{B(x)}$.  For simplicity, assume that $P_\theta$ has density $p(\cdot|\theta)$. Let $p(x) = \int p(x|\theta)\pi(\theta)\d\theta$ be the marginal and note that $p(x|\theta)\pi(\theta) = \pi(\theta|x)p(x)$. Then, by Fubini's theorem, 
\begin{align*}
    \E_{\theta\sim\pi} \E_{X\sim P_\theta} L_{B(X)}(\theta, \phi(X)) &= \int_\Theta \int_\calX L_{B(x)} (\theta, \phi(x)) p(x|\theta)\pi(\theta)\d x\d\theta \\ 
    &= \int_\calX \int_\Theta L_{B(x)} (\theta, \phi(x)) \pi(\theta|x)p(x)\d \theta\d x \\ 
    &= \int_\calX \ell(\phi(x)|x) p(x) \d\theta.
\end{align*}
By minimizing the expected posterior $\ell(\phi(x)|x)$ we're minimizing the above integrand at every $x$, thus minimizing the value of the integral. 
\end{proof}

The Bayesian paradigm is thus post-hoc valid in the sense of Proposition~\ref{prop:bayesian-post-hoc-valid}. But it does not take into account the notion of type-I error---it is solely concerned with minimizing the loss. In other words, Bayesian theory treats type-I and type-II loss as symmetric, whereas Neyman-Pearson theory, and our generalized version thereof, views type-I risk as taking precedence. A natural question is whether one can incorporate a notion of type-I risk safety into Bayesian decision theory; we provide one answer  below. 

We might consider the following definition of type-I risk in the Bayesian context: 
\begin{equation}
\label{eq:bayesian-type-I-risk}
  S_\pi(\delta)= \sup_{B:\calX\to\calB}\E_{\theta\sim \pi_0} \E_{X\sim P_\theta} L_{B(X)} (\theta, \delta(X)), 
\end{equation}
where we emphasize that $\theta$ is being drawn from the induced prior $\pi_0$ over $\Theta_0$, not from $\pi$ itself. It's helpful to compare~\eqref{eq:bayesian-type-I-risk} to the frequentist counterpart in the main paper, Definition~\ref{def:risk}. First, following the Bayesian philosophy, we've replaced the supremum over $\Theta_0$ with an expectation (the supremum over $\Theta_0$ appears as a supremum over $\calP$ in Definition~\ref{def:risk}). 
But we have also moved the supremum over $B$ to the outside;
we leave moving the supremum inside $\E_{\theta\sim\pi_0}$ as a possible future direction.  

It turns out that we can solve a constrained optimization problem in order provide an alternative estimator which satisfies $S_\pi(\delta) \leq 1$. This alternative estimator is the Bayes decision for a new problem with a different loss function. Only the general form of this estimator can be given, however. It involves a parameter $\lambda$ which, to calculate explicitly, would require knowledge of the particular  distributions involved (and thus can be calculated for any given problem, just not in a closed form manner that applies to all problems). 

\begin{proposition}
    There exists some $\lambda\geq 0$ such that the Bayes decision $\delta_\pi$ with loss
    \begin{equation}
    \widehat{L}(\theta, a) = \begin{cases}
        L(\theta, a), &\theta\in\Theta_1, \\
        \frac{\pi(\Theta_0) + \lambda}{\pi(\Theta_0)}L(\theta, a), & \theta\in\Theta_0,
    \end{cases}
\end{equation}
satisfies $S_\pi(\delta_\pi)\leq 1$ on the original problem.
\end{proposition}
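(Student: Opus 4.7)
The plan is to treat $\lambda$ as a Lagrangian-style multiplier and exploit the monotonicity of the Bayes decision in $\lambda$. Interpret the modified loss $\widehat{L}$ as applying loss by loss, i.e.\ define $\widehat{L}^\lambda_b$ by the same formula for each $b\in\calB$. Then, by the same argmin derivation used to characterize the unconditional Bayes-optimal action in the preceding discussion, the Bayes decision $\delta_\lambda$ rejects at $(X,b)$ iff
$\tfrac{\pi(\Theta_0)+\lambda}{\pi(\Theta_0)}\,f_b(X) < g_b(X),$
where $f_b(X)=\E_{\theta\sim\pi_0(\cdot\mid X)}[L_b(\theta,1)]\,\pi(\Theta_0\mid X)$ and $g_b(X)=\E_{\theta\sim\pi_1(\cdot\mid X)}[L_b(\theta,0)]\,\pi(\Theta_1\mid X)$. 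Only the coefficient of $f_b(X)$ depends on $\lambda$, and it is strictly increasing in $\lambda$, so $\lambda\mapsto\delta_\lambda(X,b)$ is weakly non-increasing pointwise.

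Next I would propagate this monotonicity to $S_\pi$. Writing $R_B(\delta):=\E_{\theta\sim\pi_0}\E_{X\sim P_\theta}[L_{B(X)}(\theta,\delta(X,B(X)))]$ so that $S_\pi(\delta)=\sup_B R_B(\delta)$, an adaptation of Lemma~\ref{lem:risk-as-adversary} to the product measure $\pi_0\times P_\theta$ yields
$S_\pi(\delta)=\E_{\theta\sim\pi_0}\E_{X\sim P_\theta}\bigl[\sup_b L_b(\theta,1)\,\delta(X,b)\bigr].$
Pointwise monotonicity of $\delta_\lambda$ makes $\lambda\mapsto S_\pi(\delta_\lambda)$ weakly non-increasing. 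Moreover, for $\pi_0$-a.e.\ $\theta$ and $P_\theta$-a.e.\ $X$ one has $\pi(\Theta_0\mid X)>0$, so whenever $L_b(\theta,1)>0$ the rejection threshold $\lambda<\pi(\Theta_0)\bigl(g_b(X)/f_b(X)-1\bigr)$ is finite and $\delta_\lambda(X,b)\to 0$ as $\lambda\to\infty$.

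I would then conclude by driving $\lambda\to\infty$. Under a mild domination hypothesis---for example, $(\theta,X)\mapsto\sup_b L_b(\theta,1)$ belonging to $L^1(\pi_0\times P_\theta)$---dominated convergence passes the limit through the outer expectation, and monotonicity of $\sup_b L_b(\theta,1)\delta_\lambda(X,b)$ in $\lambda$ handles the supremum, yielding $S_\pi(\delta_\lambda)\to 0$. Combined with monotonicity, the set $\{\lambda\ge 0:S_\pi(\delta_\lambda)\le 1\}$ is non-empty; any member supplies the required $\lambda$.

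The main obstacle is the last step, specifically, interchanging $\sup_b$ with the $\lambda$-limit inside the integral. Monotonicity in $\lambda$ helps, but without uniform control one cannot rule out the possibility that a different $b$ takes over the supremum at each $\lambda$ and keeps it bounded away from zero even as each individual $\delta_\lambda(X,b)\to 0$. Integrability of $\sup_b L_b(\theta,1)$ resolves this cleanly via dominated convergence; in its absence one would need additional structure on $\{L_b\}_{b\in\calB}$---for instance a tail condition in the spirit of~\eqref{eq:C1}, or finiteness of $\calB$---to enforce sufficiently uniform decay of $\delta_\lambda$.
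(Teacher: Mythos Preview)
Both you and the paper start from the Lagrangian $B_\pi(\delta)+\lambda(S_\pi(\delta)-1)$, but proceed differently. The paper's argument is purely algebraic: it expands the Lagrangian, collects the $\Theta_0$ and $\Theta_1$ pieces, and recognizes the result as $\E_{\theta\sim\pi}\E_{X\sim P_\theta}\widehat{L}(\theta,\delta(X))-\lambda$, i.e.\ the Bayes risk for the modified loss minus a constant. Hence the Bayes decision for $\widehat{L}$ minimizes the Lagrangian. The paper then simply \emph{asserts} that ``the value of $\lambda$ can then be adjusted to ensure $S_\pi(\delta)\le 1$'', offering no further justification. Your route is more operational: you write down the rejection rule for $\delta_\lambda$ explicitly, observe its pointwise monotonicity in $\lambda$, and push $\lambda\to\infty$ to force $S_\pi(\delta_\lambda)\to 0$. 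This actually supplies the existence argument the paper omits. Conversely, the paper's algebraic identification shows that $\delta_\lambda$ is the \emph{Lagrangian minimizer} (not merely feasible), which your argument does not recover---so each approach buys something the other does not.

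The obstacle you flag at the end is real but largely handled by your own monotonicity: since $\lambda\mapsto\sup_b L_b(\theta,1)\delta_\lambda(X,b)$ is non-increasing, monotone convergence (assuming $S_\pi(\delta_0)<\infty$) reduces the question to whether the pointwise limit of the supremum is zero. That can fail if $\sup_b g_b(X)/f_b(X)=\infty$ on a set of positive measure, so some regularity on the loss family is indeed needed---but the paper's proof is completely silent on this point, so your treatment is at least as complete as the paper's.
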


\begin{proof}
    In order to enforce the constraint $S_\pi(\delta_\pi) \leq 1$ we introduce the Lagrange multiplier $\lambda$ and solve the optimization problem 
\begin{equation}
    \min_\delta \big\{ B_\pi(\delta) + \lambda(S_\pi(\delta) - 1)\big\}.
\end{equation}
Expanding, we can collect terms and obtain 
\begin{align*}
    B_\pi(\delta) + \lambda(S_\pi(\delta) - 1) &= (\pi(\Theta_0) + \lambda)\E_{\theta\sim \pi_0}\E_{X\sim P_\theta} L(\theta, \delta(X))  \\
    &\qquad + \E_{\theta\sim\pi_1}\E_{X\sim P_\theta} L(\theta,\delta(X))\pi(\Theta_1) - \lambda \\ 
    &= \E_{\theta\sim\pi} \E_{X\sim P_\theta} \widehat{L}(\theta, \delta(X)) - \lambda. 
\end{align*}
This is minimized by the Bayes estimator in~\eqref{eq:bayes_estimator} with $L$ replaced by $\widehat{L}$. The value of $\lambda$ can then be adjusted to ensure that $S_\pi(\delta)\leq 1$.  
\end{proof}

\section{On risk control vs error probabilities}
\label{app:risk-vs-error}

The formulation of post-hoc hypothesis testing we've advanced here changes the kinds of guarantees that an analyst can give concerning the performance of their tests. The currency of our framework is risk control (Definition~\ref{def:risk}) instead of type-I error control---one obtains a bound on the expected loss instead of the error probability. Given that modern hypothesis testing uses error probabilities, it is reasonable to be skeptical of changing the metric. Here we consider four brief arguments in favor of risk control. To provide a full-throated defense would require its own paper, but it's worth at least sketching these considerations here. 

The first argument is historical. As we've discussed, \citet{wald1939contributions} noted that standard Neyman-Pearson hypothesis testing is a part of classical statistical decision theory once it is reformulated in terms of loss functions. And decision theory has always been concerned with expected loss as the error metric, due to compelling arguments such as representation theorems~\citep{savage1954foundations}, coherence and Dutch books arguments~\citep{de1937prevision}, complete class theorems~\citep{wald1950statistical,blackwell1954theory}, amenability to central limit theorems and laws of large numbers, and so on. Thus, our emphasis on risk control can be seen as a return to the roots of hypothesis testing as decision theory, where expected risk is the norm.

The second is a pragmatic argument. As we mentioned in the introduction, there are alternatives to Neyman-Pearson hypothesis testing, the most successful of which is the Bayesian agenda. But if one is fond of the tools of the Neyman-Pearson paradigm---such as significance levels, power, and accept/reject procedures---and seeks to generalize those tools to handle post-hoc analysis, then the formulation of post-hoc hypothesis testing studied here (and in \citealp{grunwald2024beyond}) appears to be the only option currently on offer. (See Appendix~\ref{app:bayesian} for a longer discussion on Bayesian decision theory and how it relates to our framework.)  

The third argument is pedagogical. It is well known that p-values and error probabilities are misunderstood by practitioners, necessitating a near-constant stream of mildly condescending educational articles on their correct use and interpretation. Changing the metric to risk control (and reporting e-values instead of p-values) may be prophylactic, allowing us to leave some of this confusion behind. 

Last is a normative argument. Traditional hypothesis testing, with its focus on controlling error probabilities, can mislead practitioners into treating the results as definitive solely based on whether $p\leq \alpha$. This obscures the fundamental uncertainty inherent in any single study. Risk control, by contrast, emphasizes expected loss across many applications of a testing procedure. The guarantee is only that the average loss will be controlled in the long run. It emphasizes that any individual study may be wrong, and one should remain humble when asserting conclusions.

\end{document}